\newtheorem{thm}{Theorem}[section]
\newtheorem{lem}[thm]{Lemma}
\numberwithin{equation}{section}
\newcommand{\bQ}{{\mathbb{Q}}}
\newcommand{\bR}{{\mathbb{R}}}
  \newcommand{\C}{{\mathcal{C}}}
  \newcommand{\M}{{\mathcal{M}}}
\renewcommand{\P}{{\mathcal{P}}}
  \newcommand{\Y}{{\mathcal{Y}}}
\renewcommand{\span}{\operatorname{span}}
\newcommand{\rank}{\operatorname{rank}}
\newcommand{\coker}{\operatorname{coker}}
\newcommand{\td}{\mathrm{td}\,}
\newcommand{\sm}{\setminus}
\newcommand{\CYL}{\mathrm{cyl}}
\newcommand{\VR}{\mathrm{VR}}
\begin{document}

\title[Global rigidity of generic frameworks on the cylinder]{Global rigidity of generic frameworks on the cylinder}
\author[Bill Jackson]{Bill Jackson}
\address{School of Mathematical Sciences\\ Queen Mary, University of London\\
E1 4NS \\ U.K. }
\email{b.jackson@qmul.ac.uk}
\author[Anthony Nixon]{Anthony Nixon}
\address{Department of Mathematics and Statistics\\ Lancaster University\\
LA1 4YF \\ U.K. }
\email{a.nixon@lancaster.ac.uk}
\date{\today}

\begin{abstract}
We show that a generic framework $(G,p)$ on the cylinder is globally
rigid if and only if $G$ is a  complete graph on at most four
vertices or $G$ is both redundantly rigid and $2$-connected. To
prove the theorem we also derive a new recursive construction of
circuits in the simple $(2,2)$-sparse matroid, and a characterisation of
rigidity for generic frameworks on the cylinder when a single  designated
vertex is allowed to move off the cylinder.
\end{abstract}

\keywords{rigidity, global rigidity, circuit, stress matrix, framework on a surface}
\subjclass[2010]{52C25, 05C10 \and 53A05}

\maketitle

\section{Introduction}

A (bar-joint) framework $(G,p)$ in $\mathbb{R}^d$ is the combination of a
finite, simple graph $G=(V,E)$ and a  realisation $p:V\rightarrow
\mathbb{R}^d$. The framework $(G,p)$ is rigid if every edge-length
preserving continuous motion of the vertices arises as a congruence
of $\mathbb{R}^d$. Moreover $(G,p)$ is globally rigid if every framework $(G,q)$ with the same edge lengths as $(G,p)$ arises from a congruence of $\mathbb{R}^d$.

In general it is NP-hard to determine the rigidity or global rigidity of a given framework \cite{Abb,Sax}.  These problems become more tractable, however, for {\em generic frameworks in $\bR^d$}, i.e. frameworks whose coordinates are algebraically independent over $\bQ$. It is known that both the rigidity and global rigidity of a generic framework $(G,p)$ in $\mathbb{R}^d$ depend only on the underlying graph $G$, see \cite{AR,GHT}. We say that  {\em $G$ is rigid} or {\em globally rigid in $\mathbb{R}^d$} if some/every generic realisation of $G$ in $\mathbb{R}^d$ has the corresponding property.   Combinatorial characterisations of generic rigidity and global rigidity in $\mathbb{R}^d$ have been obtained when $d\leq 2$, see \cite{laman,J&J}, and these characterisations give rise to efficient combinatorial algorithms to decide if these properties hold. In higher dimensions, however, no  combinatorial characterisations or algorithms are yet known.

We consider the situation where $(G,p)$ is a framework in
$\mathbb{R}^3$ whose vertices are constrained to lie on a fixed surface.
Combinatorial characterisations for generic rigidity in this context were established for the sphere \cite{Wcones,NOP},  cylinder \cite{NOP}, and cone \cite{NOP2}. In particular it was shown that a generic realisation of a graph $G$ on the sphere is rigid if and only $G$ is rigid in the plane.

The characterisation of rigidity for a generic framework $(G,p)$ on the cylinder uses the {\em simple $(2,2)$-sparse matroid}. This is the matroid $\M_{2,2}^*$ on the edge set of a large complete graph $K_n$ in which a set of edges $F$ is {\em independent} if and only if $|F'|\leq 2|V(F')|-2$ for all $\emptyset\neq F'\subseteq F$, with strict inequality when $|F'|=2$. For $G\subseteq K_n$, we denote the submatroid of $\M_{2,2}^*$ induced on $E(G)$ by $\M_{2,2}^*(G)$.

\begin{thm}\label{thm:cylinderlaman}
Let $(G,p)$ be a generic framework on the cylinder.
Then $(G,p)$ is
rigid if and only if $G$
is a complete graph on at most 3 vertices or $\M_{2,2}^*(G)$ has rank $2|V(G)|-2$.
\end{thm}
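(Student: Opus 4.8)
The plan is to reduce finite rigidity to infinitesimal rigidity, express the latter as a rank condition on a rigidity matrix, and then show that for generic $p$ this rank equals the rank of $\M_{2,2}^*(G)$. Writing $\Y$ for the cylinder, let $R(G,p)$ be the $(|E|+|V|)\times 3|V|$ matrix with a row for each edge $uv$ recording the constraint $(\dot p(u)-\dot p(v))\cdot(p(u)-p(v))=0$ and a row for each vertex $v$ recording that $\dot p(v)$ lies in the tangent plane to $\Y$ at $p(v)$. The infinitesimal rotation about the axis of $\Y$ and the infinitesimal translation along it span a $2$-dimensional space of trivial infinitesimal motions, so $(G,p)$ is infinitesimally rigid on $\Y$ precisely when $\rank R(G,p)=3|V|-2$. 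The $|V|$ vertex rows are supported on disjoint triples of columns, hence are linearly independent; eliminating them replaces the edge rows by a tangential rigidity matrix $R_\Y(G,p)$ of size $|E|\times 2|V|$, and infinitesimal rigidity becomes the condition $\rank R_\Y(G,p)=2|V|-2$.

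The combinatorial heart of the argument is to show that for generic $p$ the linear matroid on $E(G)$ given by the rows of $R_\Y(G,p)$ is exactly $\M_{2,2}^*(G)$. One inclusion is routine: for any $F\subseteq E(G)$ the two trivial tangential motions of the subframework on $V(F)$ lie in the kernel, so independence of the rows indexed by $F$ forces $|F'|\le 2|V(F')|-2$ for every nonempty $F'\subseteq F$ (the strict inequality when $|F'|=2$ being automatic in a simple graph), whence $\rank R_\Y(G,p)\le\rank\M_{2,2}^*(G)$ for all $p$. The reverse inclusion — that every $(2,2)$-sparse edge set has independent rows at a generic $p$ — is the hard part. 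I would prove it by the standard inductive scheme: fix a recursive construction of the $(2,2)$-tight simple graphs (beginning with $K_4$ and building up by $0$-extensions, $1$-extensions, and, if required, a small number of further gluing operations), show that each operation carries an infinitesimally rigid realisation on $\Y$ to one of the larger graph, and check the base graphs by explicit computation; an arbitrary $(2,2)$-sparse $F$ then follows by extending it to a tight graph, with the cases $|V(F)|\le 3$ done directly. The main obstacle is exactly here: identifying the right list of operations and verifying geometrically that each preserves infinitesimal rigidity on the cylinder, in particular choosing the generic placements so as to miss the subvarieties on which a move degenerates.

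Granting the matroid identification, the theorem follows by combining it with a short analysis of the exceptional graphs. If $\rank\M_{2,2}^*(G)=2|V|-2$ then $(G,p)$ is infinitesimally rigid, hence rigid. If $G=K_n$ with $n\le 3$, then the edge lengths of $(G,p)$ determine the configuration up to congruence of $\mathbb{R}^3$, so every edge-length-preserving motion is a congruence and $(G,p)$ is rigid; these exceptions are genuine because for $|V|\le 3$ the constraints ``all vertices lie on $\Y$'' fail to cut the $6$-dimensional congruence orbit of a generic configuration down to the $2$-dimensional orbit of the isometries preserving $\Y$, so infinitesimal rigidity can fail while finite rigidity persists. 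Conversely, assume $(G,p)$ is rigid. If $|V|\le 3$ then either $G$ is complete (a listed exception; note $K_1$ in any case satisfies the rank condition) or $G$ has at most two edges and is easily seen to be flexible, a contradiction; so take $|V|\ge 4$. Then for generic $p$ the congruence orbit meets $\Y$ in exactly the $2$-dimensional orbit of the isometries preserving $\Y$, so rigidity forces the connected component of $p$ in the configuration variety to be this $2$-dimensional orbit; as that component is smooth of dimension $2|V|-\rank R_\Y(G,p)$, we get $\rank R_\Y(G,p)=2|V|-2$, and hence $\rank\M_{2,2}^*(G)=2|V|-2$ by the matroid identification.
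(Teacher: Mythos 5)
This theorem is not proved in the paper: it is quoted verbatim from \cite{NOP}, so there is no in-paper argument to compare against. Your architecture does match the proof there — the Asimow--Roth-style reduction of generic rigidity to infinitesimal rigidity on $\Y$, elimination of the $|V|$ independent vertex rows to get an $|E|\times 2|V|$ tangential matrix whose generic rank is compared with $\rank\M_{2,2}^*(G)$, the easy direction via the two trivial tangential motions of every subframework, and the observation that for $K_n$ with $n\le 3$ equivalence coincides with congruence so rigidity is automatic.

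The genuine gap is the one you flag yourself and then do not fill: the Laman-type direction, that every $(2,2)$-sparse edge set gives independent rows at a generic $p$. Saying you ``would prove it by the standard inductive scheme \dots and, if required, a small number of further gluing operations'' is not a proof, and the difficulty is real: $K_4$ together with $0$- and $1$-extensions does \emph{not} generate all $(2,2)$-tight simple graphs. For example, two disjoint copies of $K_4$ joined by two independent edges is $(2,2)$-tight with minimum degree $3$, but every degree-$3$ vertex lies in a $K_4$, so every $1$-reduction creates a parallel edge and no $0$-reduction applies. One therefore needs further operations (vertex splitting and join/extension moves of the kind appearing in Theorem \ref{thm:circuit_dec} and in \cite{Nix}), and verifying that each of these preserves generic infinitesimal rigidity on the cylinder is the substantive content of \cite{NOP} — comparable in weight to the arguments of Sections \ref{sec:vsplit} and \ref{sec:vfree} of this paper. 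A secondary, smaller gap: your claim that for $|V|\ge 4$ the congruence orbit of a generic $p$ meets $\Y^{|V|}$ locally only in the $2$-dimensional orbit of the cylinder's isometries also needs justification (four generic points lie on only finitely many unit cylinders, so a continuous family of ambient congruences keeping the points on $\Y$ must preserve $\Y$); this too is supplied in \cite{NOP}.
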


This characterisation implies that the rigidity of a generic framework $(G,p)$ on a cylinder depends only on the underlying graph $G$.
We will say that a graph $G$ is {\em rigid on the cylinder} if it is generically rigid on the cylinder and that  $G$ is {\em redundantly rigid on the cylinder} if $G-e$ is rigid on the cylinder for all edges $e$ of $G$.

We next consider global rigidity on surfaces. Such problems arise
naturally as sensor network localisation problems, see \cite[Page
2]{LL} and the references therein. Connelly and Whiteley \cite{CW}
showed that a graph $G$ is generically globally rigid on the sphere
if and only if it is generically globally rigid in the plane (which
holds if and only if $G$ is 3-connected and redundantly rigid in the
plane by \cite{J&J}). In \cite{JMN}, necessary combinatorial
conditions were established for a framework on a surface to be
generically globally rigid. The conditions, redundant rigidity and
$k$-connectivity (where the integer $k$ depends on the chosen
surface), are analogous to those which characterise generic global
rigidity on the plane and the sphere. These conditions were
conjectured to also be sufficient for generic frameworks on
cylinders and cones. In this paper we verify this conjecture for the
cylinder.

\begin{thm}\label{thm:full}
Let $(G,p)$ be a generic framework on the cylinder. Then $(G,p)$ is
globally rigid on the cylinder if and only if $G$ is either a
complete graph on at most four vertices or $G$ is 2-connected and
redundantly rigid on the cylinder.
\end{thm}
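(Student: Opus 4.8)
The plan is to follow the by-now standard two-pronged structure used for global rigidity results of this type. Necessity is the easy direction: if $(G,p)$ is globally rigid on the cylinder and $G$ is not complete on at most four vertices, then by the results of \cite{JMN} cited above, $G$ must be $2$-connected and redundantly rigid on the cylinder, so there is essentially nothing new to do here beyond quoting those necessary conditions and checking the small complete graphs $K_1,\dots,K_4$ directly. The substance is the sufficiency direction, and for that I would first reduce to the case where $G$ is a \emph{circuit} in the simple $(2,2)$-sparse matroid $\M_{2,2}^*$ — or more precisely, to the ``minimal'' redundantly rigid graphs on the cylinder, $M$-circuits — since redundant rigidity means $G$ has the rank condition of Theorem 1.3 even after deleting any edge, and a standard argument lets one build $G$ up from such minimally redundantly rigid pieces while preserving $2$-connectivity. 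The key tool will be a recursive construction of $\M_{2,2}^*$-circuits: every such circuit is obtained from smaller ones by a bounded list of operations (vertex splits, edge additions, $1$- and $2$-extensions, coning-type moves, and a base-graph gluing operation). This is exactly the ``new recursive construction of circuits in the simple $(2,2)$-sparse matroid'' promised in the abstract, so I would prove that recursion first as a self-contained combinatorial lemma.

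With the recursion in hand, the global-rigidity proof proceeds by induction along it. For the base cases one exhibits, for each minimal graph in the construction, an explicit generic globally rigid framework on the cylinder — typically by producing a full-rank equilibrium stress matrix (of the appropriate corank, here corank reflecting the $2$ rigid motions of the cylinder and the one trivial direction, so stress-matrix rank $|V|-2$ rather than the Euclidean $|V|-d-1$) and invoking the surface analogue of the sufficient condition for global rigidity, i.e.\ the cylinder version of the Connelly/Gortler–Healy–Thurston criterion. For the inductive step one must show each construction operation preserves generic global rigidity on the cylinder: if $(G,p)$ is globally rigid and $G'$ is obtained from $G$ by the operation, then $(G',p')$ is globally rigid for generic $p'$. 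The vertex-split and extension moves are handled by the familiar stress-matrix perturbation arguments (lift a stress matrix through the operation, check the new vertex's equations can be satisfied with the added edges, verify genericity keeps the rank maximal); the gluing/coning moves require a ``1-joint'' or ``2-joint'' global rigidity lemma saying that gluing two globally rigid frameworks along a sufficiently large shared rigid subframework yields a globally rigid framework, which one proves by analysing how an equivalent framework must agree on each piece up to congruence and then patching.

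The two auxiliary results flagged in the abstract are what make the induction actually close, so I would develop them before the main argument. First, the characterisation of rigidity ``with one designated vertex allowed off the cylinder'' is needed because several construction operations (particularly those that add a vertex of low degree) are most naturally analysed by temporarily releasing a vertex from the surface; this should be proved by a direct count/matroid argument extending Theorem 1.3, identifying the relevant sparsity matroid for the mixed constraint system. Second, the ``dilation'' variant — where the projection of any motion onto the cylinder's axis is required to be a dilation rather than an arbitrary translation — gives a cleaner inductive hypothesis for both rigidity and global rigidity, because it removes the annoying one-dimensional translational freedom along the axis and makes the stress-matrix coranks line up with the Euclidean case; I would prove the dilation-rigidity and dilation-global-rigidity characterisations first and then transfer back to the genuine cylinder by a separate argument handling that single extra degree of freedom (essentially showing that a $2$-connected redundantly rigid graph cannot have a nontrivial axial-dilation motion generically).

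I expect the main obstacle to be the inductive step for the gluing/coning operations in the circuit recursion: making the ``join along a rigid subframework implies global rigidity'' lemma work requires the shared subframework to be not merely rigid but globally rigid \emph{and} large enough (at least three shared vertices in general position on the cylinder, to pin down the congruence), and ensuring the recursion only ever glues along such subframeworks — while simultaneously maintaining $2$-connectivity and the redundant-rigidity bookkeeping — is the delicate combinatorial heart of the argument. A secondary difficulty is the careful genericity/transversality management when perturbing stress matrices through vertex splits on a curved surface, where the equilibrium conditions are no longer linear in the coordinates; this is where the dilation reformulation earns its keep by keeping the algebra tractable.
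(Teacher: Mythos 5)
Your overall architecture matches the paper's: necessity from \cite{JMN}, reduction of the general case to $\M_{2,2}^*$-circuits (the paper does this via an ear decomposition of the connected matroid $\M_{2,2}^*(G)$ plus a glueing lemma), a new recursive construction of circuits, and an induction that propagates a maximum rank equilibrium stress through the construction operations, with the $v$-free and dilation-restricted (``VR'') results as auxiliary inputs. Two points, however, deserve correction, one of which is a genuine gap.

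The gap: your plan hinges on ``invoking the surface analogue of the sufficient condition for global rigidity, i.e.\ the cylinder version of the Connelly/Gortler--Healy--Thurston criterion.'' No such criterion was available to invoke; it was precisely Conjecture 1 of \cite{J&N}, and proving (the needed instance of) it is a substantial part of the work. The paper's route is: Theorem 12 of \cite{J&N} shows two equivalent generic frameworks share the $\omega$-part of an equilibrium stress; since $(z_1,\dots,z_n)$ and $(1,\dots,1)$ span the cokernel of the common $\Omega$ when the stress has maximum rank, the $z$-coordinates of the two frameworks are related by a dilation (Lemma \ref{lem:zrestricted}); and then the purely combinatorial characterisation of VR-global rigidity ($G$ 2-connected with at least $n+1$ edges, Theorem \ref{thm:vrglobal}) forces congruence. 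So the dilation-restricted theory is not, as you propose, a cleaner ambient setting for the induction from which one ``transfers back'' by killing one degree of freedom; it is the mechanism by which a maximum rank stress is converted into global rigidity at all. Without this step (or some substitute for the missing GHT-type theorem) your induction establishes only that circuits carry maximum rank stresses, not that they are globally rigid.

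A secondary misdiagnosis: you identify the join/gluing operations inside the circuit recursion as the delicate heart of the argument. The paper deliberately engineers these away: the authors state they could not show that the $1$-, $2$- and $3$-join operations preserve the maximum rank stress property, and Theorem \ref{thm:strongrecurse} is designed so that the only operations in the recursion are $K_4^-$-extension and generalised vertex split, both of which admit Schur-complement/limiting stress arguments. Gluing appears only at the ear-decomposition level (Lemma \ref{lem:glue}), where two globally rigid pieces sharing two vertices are combined using the discreteness of the isometry group of the cylinder --- a much easier statement than the within-recursion join lemma you anticipate. Relatedly, the $v$-free rigidity characterisation is not used to analyse low-degree vertex additions; it is used to prove that every nonzero equilibrium stress on a generic circuit has $\lambda_i\neq 0$ at every vertex (Lemma \ref{lem:expand}), a fact needed to run the generalised vertex split argument.
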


The key step in proving sufficiency in Theorem \ref{thm:full} is the
following result which deals with the special case when $G$ is
2-connected and redundantly rigid with the minimum possible number
of edges. Theorem \ref{thm:cylinderlaman} implies that $|E|\geq
2|V|-1$ whenever $G=(V,E)$ is redundantly rigid, and that equality
holds only if $E$ is a circuit in  $\M_{2,2}^*$. We will abuse
terminology and say that {\em $G$ is a circuit in  $\M_{2,2}^*$}
whenever this occurs. For simplicity we will refer to such graphs as
\emph{$\M_{2,2}^*$-circuits}.

\begin{thm}\label{thm:globrigid}
Let $G$ be an $\M_{2,2}^*$-circuit and $(G,p)$ be a generic
framework on the cylinder. Then $(G,p)$ is globally rigid on the cylinder.
\end{thm}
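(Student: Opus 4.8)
The plan is to prove Theorem~\ref{thm:globrigid} by induction on $|V(G)|$, using a recursive construction of $\M_{2,2}^*$-circuits together with the stress-matrix sufficient condition for generic global rigidity on the cylinder. Recall that a generic framework $(G,p)$ on the cylinder $\Y$ is globally rigid on $\Y$ once it carries an equilibrium stress matrix --- taken with respect to the surface, so that the net stress at each vertex is a scalar multiple of the radial normal --- of the maximum possible rank $|V(G)|-2$ (this is the cylinder analogue of the Connelly--Gortler--Healy--Thurston sufficient condition for generic global rigidity; see \cite{JMN}). An $\M_{2,2}^*$-circuit $G$ carries an equilibrium stress $\omega$ that is unique up to scaling, with $\omega_e\neq 0$ for every $e\in E(G)$, and the rank of the associated stress matrix $\Omega$ is constant over generic frameworks; so it suffices to exhibit a single generic framework on $\Y$ for which $\rank\Omega=|V(G)|-2$.

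The first main ingredient is the recursive construction of $\M_{2,2}^*$-circuits announced in the abstract: every $\M_{2,2}^*$-circuit with more than a bounded number of vertices can be reduced to one or two smaller $\M_{2,2}^*$-circuits by an inverse Henneberg $1$-extension (deleting a degree-three vertex $v$ and adding an edge between two of its neighbours) or by one of a short further list of moves needed when $G$ has a small vertex cut or admits no such $1$-reduction. I would derive this by a connectivity/ear-decomposition analysis of circuits in the $(2,2)$-sparse matroid, in the spirit of Berg and Jord\'an's treatment of the plane rigidity matroid. The base cases are $K_4$ and finitely many other small circuits; these are handled directly, either by appeal to Theorem~\ref{thm:full} or by writing down an explicit generic realisation and checking that its stress matrix attains rank $|V|-2$.

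The inductive step amounts to showing that each move preserves the existence of a maximum-rank surface stress matrix, and this is where the two auxiliary characterisations stated in the abstract are used. For a $1$-extension producing a new degree-three vertex $v$ adjacent to $x,y,z$ from an $\M_{2,2}^*$-circuit $G$ in which $xy$ is the deleted edge, the idea --- following the plane template --- is to degenerate: realise the larger graph $G'$ with $v$ pushed off the surface onto the line through the positions of $x$ and $y$, so that the stresses on $vx,vy$ reproduce the old stress on $xy$ while the new stress direction comes from $vz$. That this ``one designated vertex off the cylinder'' framework is rigid, and hence supports such a stress, is exactly the first auxiliary result; genericity then allows $v$ to be pulled back onto $\Y$ without the rank of the stress matrix dropping. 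The ``axial-dilation'' characterisation plays the analogous role for the moves arising when $G$ has a small vertex cut or no admissible $1$-reduction, relating the relevant subframework to a rigidity problem that carries one extra, dilational, degree of freedom.

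I expect the main obstacle to be precisely this transfer of the stress matrix across a move: one must track the surface normals (and, in the dilation-constrained cases, the extra dilational direction) carefully enough to guarantee that the rank of the stress matrix goes up by exactly one --- equivalently, that no spurious realisation of the new degree-three vertex survives --- rather than merely staying below the maximum. Establishing the recursive construction and verifying the handful of base cases are, by comparison, routine.
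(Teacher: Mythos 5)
The decisive gap is your opening ``recall'': the assertion that a maximum--rank equilibrium stress suffices for generic global rigidity on the cylinder is not a citable prior result. The reference you point to establishes only \emph{necessary} conditions for global rigidity on surfaces; the sufficiency of the max--rank stress condition on the unit cylinder was an open conjecture of the authors and is proved only in this paper (Theorem~\ref{thm:unitcylinderglobal}), by exactly the machinery that Theorem~\ref{thm:globrigid} itself requires. The paper's actual route is: (i) show every generic realisation of an $\M_{2,2}^*$-circuit carries a max--rank stress $(\omega,\lambda)$ (Theorem~\ref{thm:unitstress}); (ii) use Theorem~\ref{thm:genstressPartial} to deduce that any equivalent framework $(G,q)$ carries a stress with the \emph{same} $\omega$, hence the same block $\Omega$, so the $z$-coordinates of $q$ are a dilation of those of $p$ (Lemma~\ref{lem:zrestricted}); (iii) invoke the vertically-restricted global rigidity characterisation (Lemma~\ref{lem:zrestricted2}, proved in Section~\ref{sec:restricted}) to upgrade this to congruence. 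So the ``axial-dilation'' characterisation is not, as you propose, an auxiliary device for treating join moves in the recursion --- it is the replacement for the missing sufficiency theorem, and without it (or an independent proof of that sufficiency) your argument stops at ``$(G,p)$ has a max--rank stress'' and never reaches global rigidity.

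There are also problems in the inductive step and base cases. $K_4$ is not an $\M_{2,2}^*$-circuit (a circuit has $2|V|-1$ edges); the base graphs are $K_5-e$, $H_1$ and $H_2$. On the cylinder the stress is a pair $(\omega,\lambda)$ and one must control both $\Omega$ and $\Omega+\Lambda$; your degenerate-position argument for $1$-extension (placing $v$ on the line through $x$ and $y$) is the Euclidean template and takes $v$ off the surface, where the normal term $\lambda_v\bar p(v)$ in the equilibrium condition disappears, so ``pulling $v$ back onto $\Y$ without the rank dropping'' is exactly the point that needs proof. The paper instead handles the (more general) generalised vertex split by a coincident-point limiting argument (Theorem~\ref{thm:vsplitmax}), and uses the $v$-free rigidity theorem only for the technical fact that $\lambda_v\neq 0$ (Lemma~\ref{lem:expand}). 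Finally, the join operations are precisely the moves the authors state they \emph{cannot} show preserve the max--rank stress property, which is why they refine the known recursion to one using only $K_4^-$-extensions and generalised vertex splits; your plan of handling the small-cut cases via the dilation-constrained model has no analogue in the paper and no indicated proof.
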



We will need the
following three results to prove Theorem \ref{thm:globrigid}. The first is a decomposition result for
$\M_{2,2}^*$-circuits which uses the graph operations defined in Figure
\ref{fig:sums}.

\begin{thm}\cite[Lemmas 3.1, 3.2, 3.3]{Nix}\label{thm:circuit_dec}
Suppose $G_0,G_1,G_2$ are graphs with $|E(G_i)|=2|V(G_i)|-1$ for all
$0\leq i\leq 2$ and that $G_0$ is a $j$-join of $G_1$ and $G_2$ for
some $1\leq j\leq 3$. Then $G_0$ is a $\M_{2,2}^*$-circuit if and
only if both $G_1$ and $G_2$ are $\M_{2,2}^*$-circuits.
\end{thm}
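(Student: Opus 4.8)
My plan is to establish the equivalence separately for each $j\in\{1,2,3\}$, in each case reducing it to a counting statement about subgraphs via submodularity of the function $f(H):=2|V(H)|-|E(H)|$ over subgraphs $H$, which satisfies $f(H)+f(H')\ge f(H\cup H')+f(H\cap H')$. I will use the following description of an $\M_{2,2}^*$-circuit, read off from the definition: $G$ is an $\M_{2,2}^*$-circuit precisely when $E(G)$ is dependent in $\M_{2,2}^*(G)$ while $E(G)\setminus\{e\}$ is $\M_{2,2}^*$-independent for every $e$; combined with the hypothesis $|E(G)|=2|V(G)|-2$ this amounts to saying $G$ is connected and every proper subgraph $H$ with $|V(H)|\ge 2$ satisfies $|E(H)|\le 2|V(H)|-3$, i.e.\ $f(H)\ge 3$. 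The first observation is that, by the construction of the $j$-join in Figure~\ref{fig:sums}, the interface of $G_1$ and $G_2$ inside $G_0$ is a set $T$ of $j$ vertices forming a vertex cut of $G_0$, that every edge of $G_0$ comes from a unique side (with only a bounded number of edges lying on $T$), and that the prescribed edge identifications and deletions in the join are exactly what force $|E(G_0)|=2|V(G_0)|-2$ once $|E(G_i)|=2|V(G_i)|-2$. Hence in both directions the degree count is automatic and only the subgraph condition needs checking.

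For ``$G_1,G_2$ are $\M_{2,2}^*$-circuits $\Rightarrow$ $G_0$ is'', suppose $G_0$ has a proper subgraph $H$ with $f(H)\le 2$, chosen with $|V(H)|$ minimum. Let $H_i$ be the part of $H$ lying in $G_i$, so $V(H_1)\cap V(H_2)\subseteq T$ and $E(H)$ is the union of $E(H_1)$ and $E(H_2)$, overlapping only in edges on $T$. Since $G_1,G_2$ are circuits, each nontrivial $H_i$ is either all of $G_i$ or strictly sparse with $f(H_i)\ge 3$ (trivial $H_i$ being dealt with by passing to the other side). When neither $H_i$ equals $G_i$, adding the contributions and using $|V(H)\cap T|\le j\le 3$ together with the edges the join places on $T$ gives $f(H)\ge 3$, a contradiction; when $H_1=G_1$ (so $H_2\subsetneq G_2$ by properness of $H$) one substitutes $|E(G_1)|=2|V(G_1)|-2$ and the join's deletions to reach the same contradiction; and $H_1=G_1,\,H_2=G_2$ is impossible, since then $H=G_0$. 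Thus no such $H$ exists, and since $G_0$ is connected, it is an $\M_{2,2}^*$-circuit.

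For the converse, suppose $G_0$ is an $\M_{2,2}^*$-circuit and, for contradiction, that (say) $G_1$ has a proper subgraph $H_1$ with $f(H_1)\le 2$; take $|V(H_1)|$ minimum. If $H_1$ uses none of the edges of $G_1$ that are deleted when forming $G_0$, then $H_1$ is literally a subgraph of $G_0$ and is proper, since it misses $V(G_2)\setminus T\ne\emptyset$, contradicting that $G_0$ is a circuit. Otherwise $H_1$ uses a deleted edge, so $V(H_1)$ meets $T$ in at least two vertices; then $H:=H_1\cup G_2$, formed using the join's identifications and deletions on $T$, is a proper subgraph of $G_0$ (it misses $V(G_1)\setminus V(H_1)\ne\emptyset$), and a short count using $|E(G_2)|=2|V(G_2)|-2$ and the join structure shows $f(H)\le 2$, again a contradiction. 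Finally, the dependence of $E(G_1)$ in $\M_{2,2}^*(G_1)$ follows in the same spirit: a spanning $\M_{2,2}^*$-independent subset of $E(G_1)$ of size $2|V(G_1)|-2$ would, after adjoining the undeleted edges of $G_2$, contradict the dependence of $E(G_0)$. Hence $G_1$, and symmetrically $G_2$, is an $\M_{2,2}^*$-circuit.

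I expect the real difficulty to lie in the cases $j=2$ and $j=3$. The crude estimate $f(H_1)+f(H_2)\ge 6$ exceeds the required bound $f(H)\ge 3$ only after subtracting $2|V(H)\cap T|$, leaving a deficit of $2j-3\ge 1$ when $j\ge 2$; making up this deficit forces one to track precisely which edges of the join lie on, or are deleted from, the $j$ interface vertices, and to argue that a ``bad'' subgraph of $G_0$ meeting $T$ in fewer than $j$ vertices contradicts the minimality of $H$ while one meeting $T$ in all $j$ vertices can be closed up into $G_1$ or $G_2$. Organising this interface bookkeeping uniformly, rather than as three separate computations — which is presumably why \cite{Nix} records it as Lemmas 3.1--3.3 — is the main obstacle; a secondary point is verifying, in the converse direction, that a bad subgraph of $G_1$ containing a deleted edge really lifts to a bad subgraph of $G_0$ of the same kind and not merely to one that falls one edge short.
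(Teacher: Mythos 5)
The paper does not actually prove this statement --- it is quoted from \cite[Lemmas 3.1--3.3]{Nix} --- so your proposal can only be judged on its own terms, and it has a genuine gap at its foundation. Your working characterisation of an $\M_{2,2}^*$-circuit is wrong. A circuit $C$ of $\M_{2,2}^*$ has $|E(C)|=2|V(C)|-1$ (the ``$-2$'' in the theorem statement is a slip of the paper; your argument inherits it, and it makes your reformulation internally inconsistent, since a simple graph with $|E|=2|V|-2$ can never be a minimal dependent set: the whole edge set satisfies the sparsity count, so any violation would occur in a proper subset, contradicting minimality). More importantly, the correct condition on proper subgraphs is independence, i.e.\ $|E(H)|\le 2|V(H)|-2$, that is $f(H)\ge 2$ --- not $f(H)\ge 3$ as you assert. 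Your threshold is refuted by the base graph $K_5-e$ itself: it is a circuit with $9=2\cdot 5-1$ edges, yet it contains a $K_4$ with $f(K_4)=2$. Every count in both directions of your argument is pinned to this incorrect threshold, so none of the claimed contradictions actually follow as stated.

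Second, the structural picture of the joins on which your decomposition rests is inaccurate. The $3$-join identifies no vertices: it deletes a degree-three vertex from each $G_i$ and adds three new edges between the two neighbour sets, so the interface is a $3$-edge cut whose edges belong to \emph{neither} $G_1$ nor $G_2$, not a $3$-vertex cut with every edge ``from a unique side''. The $1$- and $2$-joins delete whole $K_4$ gadgets (two vertices and five or six edges) from one or both sides, so a subgraph $H$ of $G_0$ does not decompose as $H_1\cup H_2$ with $H_i\subseteq G_i$; one must reinsert the deleted gadgets before the circuit property of the $G_i$ can be applied, and this shifts every count by a fixed amount that has to be tracked per case. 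Finally, you yourself concede that the crude submodularity bound falls short by $2j-3\ge 1$ for $j\ge 2$ and that the ``interface bookkeeping'' needed to close it is ``the main obstacle''; since that bookkeeping is exactly the content of the three lemmas being proved and it is left undone, the central step of the proof is missing rather than merely compressed.
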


\begin{center}
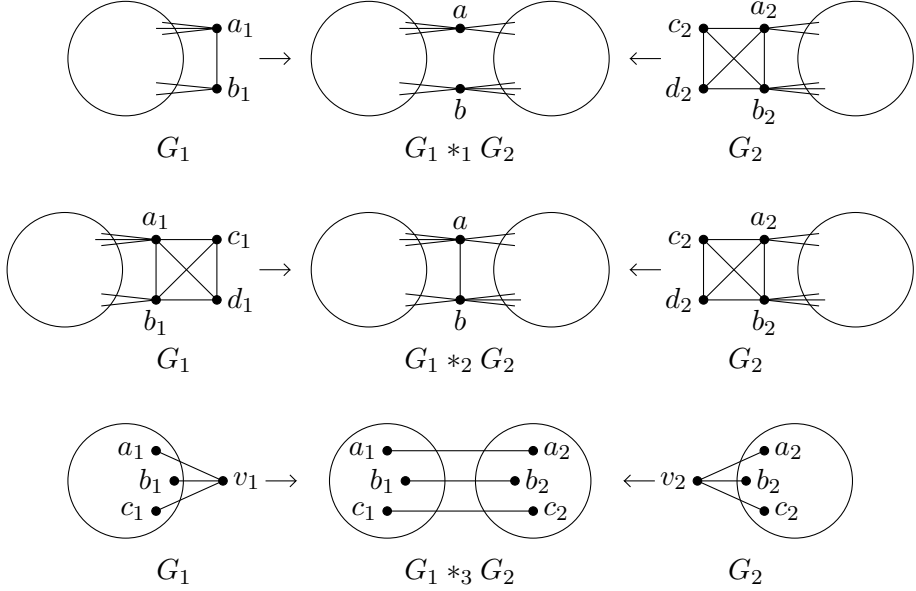
\begin{figure}[ht]
\centering
\begin{tikzpicture}[scale=0.8]

\filldraw (-4,3.5) circle (2pt) node[anchor=west]{$b_1$};
\filldraw (-4,4.5) circle (2pt) node[anchor=west]{$a_1$};

\draw (-5.5,4) circle (27pt);

\draw[black] (-5,4.5) -- (-4,4.5) -- (-4,3.5) -- (-5,3.4);

\draw[black]
(-4.9,4.6) -- (-4,4.5) -- (-4.9,4.4);

\draw[black]
(-4,3.5) -- (-5,3.6);

\filldraw (0,4.5) circle (2pt) node[anchor=south]{$a$}; 
\filldraw (0,3.5) circle (2pt) node[anchor=north]{$b$};

\draw (-1.5,4) circle (27pt);

\draw (1.5,4) circle (27pt);

\draw[black]
(-.9,4.6) -- (0,4.5) -- (-.9,4.4);

\draw[black]
(-1,4.5) -- (0,4.5);

\draw[black]
(-1,3.4) -- (0,3.5) -- (-1,3.6);

\draw[black]
(.9,4.4) -- (0,4.5) -- (.9,4.6);

\draw[black]
(.9,3.4) -- (0,3.5) -- (.9,3.6);

\draw[black]
(1,3.5) -- (0,3.5);

\draw[black] (-3.3,4) -- (-2.8,4) -- (-2.9,4.1);

\draw[black] (-2.8,4) -- (-2.9,3.9);

\filldraw (5,3.5) circle (2pt) node[anchor=north]{$b_2$};
\filldraw (5,4.5) circle (2pt) node[anchor=south]{$a_2$};
\filldraw (4,3.5) circle (2pt) node[anchor=east]{$d_2$};
\filldraw (4,4.5) circle (2pt) node[anchor=east]{$c_2$};

\draw (6.5,4) circle (27pt);


\draw[black] (5,4.5) -- (5,3.5) -- (4,3.5) -- (4,4.5) --
(5,4.5) -- (4,3.5);

\draw[black] (5,3.5) -- (4,4.5);

\draw[black] (3.3,4) -- (2.8,4) -- (2.9,3.9);

\draw[black] (2.8,4) -- (2.9,4.1);

\draw[black]
(5.9,4.4) -- (5,4.5) -- (5.9,4.6);

\draw[black]
(5.9,3.4) -- (5,3.5) -- (5.9,3.6);

\draw[black]
(6,3.5) -- (5,3.5);

  \node [rectangle, draw=white, fill=white] (b) at (-4.7,2.5) {$G_1$};

  \node [rectangle, draw=white, fill=white] (b) at (0,2.5) {$G_1 *_1 G_2$};

  \node [rectangle, draw=white, fill=white] (b) at (4.7,2.5) {$G_2$};


\filldraw (0,1) circle (2pt) node[anchor=south]{$a$}; 
\filldraw(0,0) circle (2pt) node[anchor=north]{$b$};

\filldraw (-5,0) circle (2pt) node[anchor=north]{$b_1$};
\filldraw (-5,1) circle (2pt) node[anchor=south]{$a_1$}; 
\filldraw (-4,0) circle (2pt) node[anchor=west]{$d_1$}; 
\filldraw (-4,1) circle (2pt) node[anchor=west]{$c_1$};

\filldraw (5,0) circle (2pt) node[anchor=north]{$b_2$};
 \filldraw (5,1) circle (2pt) node[anchor=south]{$a_2$}; 
\filldraw (4,0) circle (2pt) node[anchor=east]{$d_2$}; 
\filldraw (4,1) circle (2pt) node[anchor=east]{$c_2$};

\draw (-6.5,.5) circle (27pt);

\draw (6.5,.5) circle (27pt);

\draw (-1.5,.5) circle (27pt);

\draw (1.5,.5) circle (27pt);

%
%
%

\draw[black] (-5,1) -- (-5,0) -- (-4,0) -- (-4,1) --
(-5,1) -- (-4,0);

\draw[black] (-5,0) -- (-4,1);

\draw[black] (5,1) -- (5,0) -- (4,0) -- (4,1) --
(5,1) -- (4,0);

\draw[black] (5,0) -- (4,1);

\draw[black] (0,1) -- (0,0);

\draw[black] (-3.3,.5) -- (-2.8,.5) -- (-2.9,.6);

\draw[black] (-2.8,.5) -- (-2.9,.4);

\draw[black] (3.3,.5) -- (2.8,.5) -- (2.9,.4);

\draw[black] (2.8,.5) -- (2.9,.6);

\draw[black]
(-5.9,.9) -- (-5,1) -- (-5.9,1.1);

\draw[black]
(-5.9,-.1) -- (-5,0) -- (-5.9,.1);

\draw[black]
(-6,1) -- (-5,1);

\draw[black]
(-.9,.9) -- (0,1) -- (-.9,1.1);

\draw[black]
(-.9,-.1) -- (0,0) -- (-.9,.1);

\draw[black]
(-1,1) -- (0,1);

\draw[black]
(.9,.9) -- (0,1) -- (.9,1.1);

\draw[black]
(.9,-.1) -- (0,0) -- (.9,.1);

\draw[black]
(1,0) -- (0,0);

\draw[black]
(5.9,.9) -- (5,1) -- (5.9,1.1);

\draw[black]
(5.9,-.1) -- (5,0) -- (5.9,.1);

\draw[black]
(6,0) -- (5,0);

  \node [rectangle, draw=white, fill=white] (b) at (-4.7,-1) {$G_1$};

  \node [rectangle, draw=white, fill=white] (b) at (0,-1) {$G_1 *_2 G_2$};

  \node [rectangle, draw=white, fill=white] (b) at (4.7,-1) {$G_2$};


\draw (-5.5,-3) circle (27pt); 
\draw (5.5,-3) circle (27pt); 
\draw (1.2,-3) circle (27pt); 
\draw (-1.2,-3) circle (27pt);

\filldraw (-3.9,-3) circle (2pt) node[anchor=west]{$v_1$};
\filldraw (3.9,-3) circle (2pt) node[anchor=east]{$v_2$}; 
\filldraw (-4.7,-3) circle (2pt) node[anchor=east]{$b_1$}; 
\filldraw (4.7,-3) circle (2pt) node[anchor=west]{$b_2$}; 
\filldraw (-5,-3.5) circle (2pt) node[anchor=east]{$c_1$}; 
\filldraw (-5,-2.5) circle (2pt) node[anchor=east]{$a_1$}; 
\filldraw (5,-3.5) circle (2pt) node[anchor=west]{$c_2$}; 
\filldraw (5,-2.5) circle (2pt) node[anchor=west]{$a_2$}; 
\filldraw (1.2,-2.5) circle (2pt) node[anchor=west]{$a_2$}; 
\filldraw (1.2,-3.5) circle (2pt) node[anchor=west]{$c_2$}; 
\filldraw (.9,-3) circle (2pt) node[anchor=west]{$b_2$}; 
\filldraw (-1.2,-2.5) circle (2pt) node[anchor=east]{$a_1$}; 
\filldraw (-1.2,-3.5) circle (2pt) node[anchor=east]{$c_1$}; 
\filldraw (-.9,-3) circle (2pt) node[anchor=east]{$b_1$};

\draw[black] (-.9,-3) -- (.9,-3);

\draw[black] (-1.2,-2.5) -- (1.2,-2.5);

\draw[black] (-1.2,-3.5) -- (1.2,-3.5);

\draw[black] (-5,-3.5) -- (-3.9,-3) -- (-4.7,-3);

\draw[black] (-3.9,-3) -- (-5,-2.5);

\draw[black] (5,-3.5) -- (3.9,-3) -- (4.7,-3);

\draw[black] (3.9,-3) -- (5,-2.5);

\draw[black] (-3.2,-3) -- (-2.7,-3) -- (-2.8,-3.1);

\draw[black] (-2.7,-3) -- (-2.8,-2.9);

\draw[black] (3.2,-3) -- (2.7,-3) -- (2.8,-3.1);

\draw[black] (2.7,-3) -- (2.8,-2.9);

  \node [rectangle, draw=white, fill=white] (b) at (-4.7,-4.5) {$G_1$};

  \node [rectangle, draw=white, fill=white] (b) at (0,-4.5) {$G_1 *_3 G_2$};

  \node [rectangle, draw=white, fill=white] (b) at (4.7,-4.5) {$G_2$};

\end{tikzpicture}
\caption{The 1-, 2- and 3-join operations. The 1- and 2-join
operations form the graphs in the centre by merging $a_1$ and $a_2$
into $a$, and $b_1$ and $b_2$ into $b$. We write $G=G_1*_jG_2$ to mean $G$ is a $j$-join of $G_1$ and $G_2$.} \label{fig:sums}
\end{figure}
\end{center}

The second result we shall need is a recursive construction for
$\M_{2,2}^*$-circuits which uses the $j$-join operations as well as
the {\em $1$-extension operation} which deletes an edge $xy$ from a
graph $G$ and then adds a new vertex $v$  and three new edges
$vx,vy,vz$ for some vertex $z\neq x,y$. The recursion begins with
the three  $\M_{2,2}^*$-circuits defined in Figure
\ref{fig:smallgraphs}.

 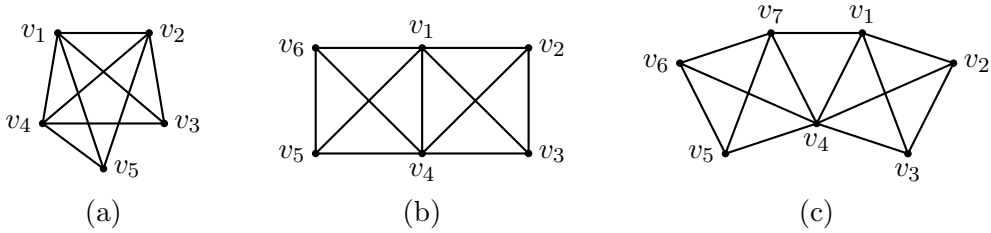
\begin{figure}[htp]
\begin{center}
\begin{tikzpicture}[scale=.4]
\filldraw (-.5,0) circle (3pt)node[anchor=east]{$v_4$};
\filldraw (0,3) circle (3pt)node[anchor=east]{$v_1$};
\filldraw (3.5,0) circle (3pt)node[anchor=west]{$v_3$};
\filldraw (3,3) circle (3pt)node[anchor=west]{$v_2$};
\filldraw (1.5,-1.5) circle (3pt)node[anchor=west]{$v_5$};

 \draw[black,thick]
(1.5,-1.5) -- (-.5,0) -- (0,3) -- (3.5,0) -- (3,3) -- (-.5,0) -- (3.5,0);

\draw[black,thick]
(1.5,-1.5) -- (0,3) -- (3,3) -- (1.5,-1.5);

  \node [rectangle, draw=white, fill=white] (b) at (1.5,-3) {(a)};
        \end{tikzpicture}
          \hspace{0.5cm}
     \begin{tikzpicture}[scale=.4]
\filldraw (0,0) circle (3pt)node[anchor=east]{$v_5$};
\filldraw (0,3.5) circle (3pt)node[anchor=east]{$v_6$};
\filldraw (3.5,0) circle (3pt)node[anchor=north]{$v_4$};
\filldraw (3.5,3.5) circle (3pt)node[anchor=south]{$v_1$};
\filldraw (7,0) circle (3pt)node[anchor=west]{$v_3$};
\filldraw (7,3.5) circle (3pt)node[anchor=west]{$v_2$};

 \draw[black,thick]
(0,0) -- (0,3.5) -- (3.5,0) -- (3.5,3.5) -- (0,0) -- (3.5,0) -- (7,3.5);

\draw[black,thick]
(0,3.5) -- (3.5,3.5) -- (7,3.5) -- (7,0) -- (3.5,3.5);

\draw[black,thick]
(7,0) -- (3.5,0);

\node [rectangle, draw=white, fill=white] (b) at (3.5,-2) {(b)};
\end{tikzpicture}
       \hspace{0.5cm}
    \begin{tikzpicture}[scale=.4]
\filldraw (0,-1) circle (3pt)node[anchor=east]{$v_5$};
\filldraw (-1.5,2) circle (3pt)node[anchor=east]{$v_6$};
\filldraw (3,0) circle (3pt)node[anchor=north]{$v_4$};
\filldraw (1.5,3) circle (3pt)node[anchor=south]{$v_7$};
\filldraw (6,-1) circle (3pt)node[anchor=north]{$v_3$};
\filldraw (4.5,3) circle (3pt)node[anchor=south]{$v_1$};
\filldraw (7.5,2) circle (3pt)node[anchor=west]{$v_2$};

 \draw[black,thick]
(0,-1) -- (-1.5,2) -- (3,0) -- (1.5,3) -- (0,-1) -- (3,0) -- (4.5,3);

\draw[black,thick]
(-1.5,2) -- (1.5,3) -- (4.5,3) -- (6,-1) -- (7.5,2) -- (4.5,3);

\draw[black,thick]
(6,-1) -- (3,0) -- (7.5,2);

  \node [rectangle, draw=white, fill=white] (b) at (3,-3) {(c)};
         \end{tikzpicture}
\end{center}
\vspace{-0.3cm}
\caption{The graphs $K_5^-,H_1$ and $H_2$.}
\label{fig:smallgraphs}
\end{figure}

\begin{thm}{\cite[Theorem 1.1]{Nix}}\label{thm:recurse}
Suppose $G$ is an $\M_{2,2}^*$-circuit. Then $G$ can be obtained from either
$K_5^-$, $H_1$ or $H_2$ by recursively applying the operations of  $1$-extension, and  $1$-, $2$- and $3$-join.
\end{thm}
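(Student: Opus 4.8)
I would prove the statement by induction on $|V(G)|$, combining Theorem~\ref{thm:circuit_dec} (which disposes of any graph that decomposes as an $i$-join) with a reverse $1$-extension move applied to the $i$-join-irreducible graphs. First I would collect the structural facts that drive the argument. If $G=(V,E)$ is an $\M_{2,2}^*$-circuit, then $E$ is minimally dependent, so $E-e$ is $(2,2)$-sparse for each $e$ and hence $|E|=2|V|-1$; a cut vertex would split the matroid as a direct sum, so $G$ is $2$-connected; and deleting a vertex of degree at most $2$ would leave too many edges for the sparsity bound, so $\delta(G)\geq 3$, whence $\sum_w\deg(w)=4|V|-2$ forces at least two vertices of degree exactly $3$. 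Call an induced subgraph $H$ of $G$ \emph{tight} if $|E(H)|=2|V(H)|-2$. Since every proper subgraph of a circuit is $(2,2)$-sparse, submodularity of $X\mapsto 2|X|-|E(G[X])|$ shows that if $H,H'$ are tight with $V(H)\cap V(H')\neq\emptyset$ then $H\cap H'$ and $H\cup H'$ are tight; and no tight subgraph contains all of $V$, nor all of $V-v$ for a vertex $v$, since $G$ and each $G-v$ carry one edge more than the tight count.

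\textbf{Base cases and the $i$-join reduction.} The $\M_{2,2}^*$-circuits on at most $7$ vertices are handled by a finite check: $K_5-e$ is the unique one on $5$ vertices, and one verifies directly that $K_5-e$, $H_1$ and $H_2$ are precisely the $\M_{2,2}^*$-circuits on at most $7$ vertices that are neither an $i$-join nor a reverse $1$-extension of a smaller $\M_{2,2}^*$-circuit, so the statement holds in this range. For the inductive step, note that any $i$-join of two $\M_{2,2}^*$-circuits has strictly more vertices than each of its pieces (circuits have at least $5$ vertices); hence if $G$ is an $i$-join for some $i\in\{1,2,3\}$, then by Theorem~\ref{thm:circuit_dec} its pieces are smaller $\M_{2,2}^*$-circuits, constructible by induction, and so is $G$.

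\textbf{The reverse $1$-extension step.} Now suppose $|V|\geq 8$ and $G$ is not an $i$-join, and fix a degree-$3$ vertex $v$ with neighbours $x,y,z$. For a pair $\{a,b\}\subseteq\{x,y,z\}$ with $ab\notin E$, deleting $v$ and adding $ab$ produces an $\M_{2,2}^*$-circuit unless $G-v$ has a \emph{proper} tight subgraph $H$ with $a,b\in V(H)$, and the preliminaries force such an $H$ also to miss the third neighbour of $v$ (otherwise $H+v$ is a proper dense subgraph of $G$). Call $v$ \emph{admissible} if some such move succeeds. I would then show: if $\{x,y,z\}$ spans at most one edge, choosing blocking tight subgraphs for two of the missing pairs — which share a neighbour of $v$ — their union is tight, contains $x,y,z$ and misses $v$, so re-adding $v$ gives a proper dense subgraph, a contradiction, and $v$ is admissible; if $\{x,y,z\}$ spans all three edges then $\{v,x,y,z\}$ induces $K_4$, and either some vertex of $\{x,y,z\}$ has degree $3$, which after checking the resulting pieces are circuits yields an $i$-join (a $2$-cut alone does not suffice here, as $H_1$ illustrates), or $G=K_5-e$; hence a non-admissible $v$ must have exactly two edges among its neighbours, say $xy,xz\in E$ and $yz\notin E$ with apex $x$, a blocking tight $H$ with $y,z\in V(H)$ and $v,x\notin V(H)$, and $\deg(x)\geq 4$ (otherwise $\{y,z\}$ is a $2$-cut and $G$ is an $i$-join).

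\textbf{Closing the induction, and the main obstacle.} It remains to rule out this last configuration at \emph{every} degree-$3$ vertex of an $i$-join-irreducible circuit with $|V|\geq 8$. Choosing $v$ and a blocking set $H$ extremally — say with $|V(H)|$ maximal — one first shows $x$ has no neighbour in $V(H)$ besides $y$ and $z$ (an extra one makes $H+x$ a proper dense subgraph, with $\deg(x)\geq 4$ excluding the degenerate case $V(H)\cup\{v,x\}=V$), so the $\deg(x)-2\geq 2$ remaining edges at $x$ go to $V\setminus(V(H)\cup\{v,x\})\neq\emptyset$; then, bringing in the guaranteed second degree-$3$ vertex, its apex and its blocking set, and analysing how these tight subgraphs overlap one another and their apices (governed by the extremal choices and the submodularity facts above), one is forced either to exhibit one of the $1$-, $2$- or $3$-joins of Figure~\ref{fig:sums} or to reduce $G$ to one of the three base circuits — contradicting the standing assumptions. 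Since any successful reverse $1$-extension produces a smaller $\M_{2,2}^*$-circuit from which $G$ is recovered by a $1$-extension, the induction closes. The main obstacle is exactly this final step: establishing that in a large $i$-join-irreducible circuit the tight subgraphs blocking the (at least two) degree-$3$ vertices must interlock into one of the three join gadgets — a careful and somewhat lengthy combinatorial case analysis.
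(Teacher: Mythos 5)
This theorem is not proved in the paper at all: it is quoted verbatim from \cite[Theorem 1.1]{Nix}, and the paper only ever uses it as a black box (indeed, Section \ref{sec:recurse} is devoted to refining it into Theorem \ref{thm:strongrecurse}). So the comparison can only be with the proof in \cite{Nix}, whose architecture the paper does reveal: the join decomposition lemmas are \cite[Lemmas 3.1--3.3]{Nix} (Theorem \ref{thm:circuit_dec} here), and the existence of admissible nodes in $3$-connected, essentially $4$-edge-connected circuits is \cite[Theorem 1.2]{Nix}. Your strategy --- induct on $|V|$, dispose of join-reducible circuits via Theorem \ref{thm:circuit_dec}, and find an admissible degree-$3$ vertex in the irreducible case using tight-set submodularity --- is exactly that architecture, so you have correctly reconstructed the shape of the argument.

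However, there is a genuine gap: the entire content of the theorem lives in the step you defer. Proving that a join-irreducible circuit on at least $8$ vertices has an admissible node is precisely \cite[Theorem 1.2]{Nix}, whose proof is a substantial case analysis occupying its own section of that paper; your final paragraph describes a plan for it (``analysing how these tight subgraphs overlap one another and their apices \dots one is forced either to exhibit one of the $1$-, $2$- or $3$-joins \dots or to reduce $G$ to one of the three base circuits'') but does not carry it out, and you acknowledge as much. As written this is a proof outline, not a proof. There is also a concrete error in the induction bookkeeping: it is not true that every $i$-join of two circuits has strictly more vertices than each piece. A $2$-join with one piece equal to $H_1$ merely replaces one $K_4$-gadget by an isomorphic one, producing a graph isomorphic to the other piece, so your induction would not terminate on such decompositions. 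This is why the paper (following \cite{Nix}) works with \emph{nontrivial} $2$-vertex- and $3$-edge-separations --- excluding the case where one side is $K_4$ and the case of non-independent cut edges --- rather than with the bare statement ``$G$ is an $i$-join''; your argument needs the same restriction, and then the irreducible case must be proved for circuits that may still possess trivial separations (compare Theorem \ref{thm:nearly3con}), which adds further cases to the analysis you have not supplied.
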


The third result tells us that two equivalent generic frameworks on the cylinder have closely related equilibrium stresses (which will be defined in Section \ref{sec:rig}).

\begin{thm}\label{thm:genstressPartial}\cite[Theorem 12]{J&N}
Let $(G,p)$ be a generic framework on the cylinder and
$(\omega,\lambda)$ be an equilibrium stress for $(G,p)$. Let $(G,q)$
be equivalent to $(G,p)$. Then $(\omega,\lambda')$ is an equilibrium
stress for $(G,q)$ for some $\lambda' \in \mathbb{R}^n$.
\end{thm}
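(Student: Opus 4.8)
The plan is to deduce this from the principle that, for a generic framework, the space of equilibrium stresses is the orthogonal complement of the tangent space to a measurement variety, so that passing from $p$ to an equivalent configuration can only shrink the row space of the rigidity matrix and hence can only enlarge the stress space. Concretely, let $C\subseteq\bR^3$ be the cylinder, a smooth irreducible affine variety defined over $\bQ$, and set $\M=C^V$, again smooth, irreducible and defined over $\bQ$, of dimension $2|V|$. Let $g\colon\M\to\bR^E$ be the squared-edge-length map $g(p)_{uv}=\|p(u)-p(v)\|^2$, and write $N_v(p)$ for a normal to $C$ at $p(v)$, which is nonzero since $C$ is smooth. A direct computation from the definition of the rigidity matrix on the cylinder shows that $(\omega,\lambda)\in\bR^E\times\bR^V$ is an equilibrium stress of $(G,p)$ if and only if $\sum_{uv\in E}\omega_{uv}(p(v)-p(u))=\lambda_v N_v(p)$ for every $v\in V$. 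Consequently there is a $\lambda$ making $(\omega,\lambda)$ an equilibrium stress of $(G,p)$ exactly when $\sum_{uv\in E}\omega_{uv}(p(v)-p(u))\perp T_{p(v)}C$ for every $v$ (and then $\lambda$ is uniquely determined by $p$ and $\omega$); unwinding the pairing $\langle\omega,\ud g|_p(\dot p)\rangle$ over tangent vectors $\dot p\in T_p\M$, this is precisely the condition $\omega\perp\operatorname{image}(\ud g|_p)$, where $\ud g|_p\colon T_p\M\to\bR^E$ is the differential of $g$ at $p$.

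Next I would bring in genericity. Put $W=\overline{g(\M)}$, the Zariski closure of the image, an irreducible variety. The set of $p\in\M$ at which $\ud g|_p$ fails to attain the maximal rank, together with $g^{-1}(\operatorname{Sing}W)$, is a proper algebraic subset of the irreducible variety $\M$; the second piece is proper because $g(\M)$ is dense in $W$ and so is not contained in the proper subvariety $\operatorname{Sing}W$. A generic $p$ avoids this set, so $g(p)$ is a smooth point of $W$, and comparing dimensions — $\dim\operatorname{image}(\ud g|_p)$ equals the generic rank of $\ud g$, which equals $\dim W=\dim T_{g(p)}W$ — gives $\operatorname{image}(\ud g|_p)=T_{g(p)}W$. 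On the other hand, since $g$ maps $\M$ into $W$ we have $\operatorname{image}(\ud g|_q)\subseteq T_{g(q)}W$ for \emph{every} $q\in\M$, with no genericity assumption (if $h$ vanishes on $W$ then $h\circ g$ vanishes identically on $\M$, so $\ud h|_{g(q)}\circ\ud g|_q=0$).

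To conclude, suppose $(G,q)$ is equivalent to $(G,p)$; then $q\in\M$ and $g(q)=g(p)$, so $\operatorname{image}(\ud g|_q)\subseteq T_{g(q)}W=T_{g(p)}W=\operatorname{image}(\ud g|_p)$, and taking orthogonal complements in $\bR^E$ yields $\operatorname{image}(\ud g|_p)^{\perp}\subseteq\operatorname{image}(\ud g|_q)^{\perp}$. If $(\omega,\lambda)$ is an equilibrium stress for $(G,p)$ then $\omega\perp\operatorname{image}(\ud g|_p)$, hence $\omega\perp\operatorname{image}(\ud g|_q)$; since $N_v(q)\neq0$ as $q(v)\in C$, we may define $\lambda'\in\bR^V$ by $\lambda'_v N_v(q)=\sum_{uv\in E}\omega_{uv}(q(v)-q(u))$, and then $(\omega,\lambda')$ is an equilibrium stress for $(G,q)$, as required. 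The one delicate point — and what I expect to be the main obstacle — is the genericity step: one must check that a generic point of the \emph{constrained} configuration space $\M=C^V$ (rather than of $(\bR^3)^V$) still maps to a smooth point of $W$ at which $\ud g$ surjects onto the tangent space. This is exactly where irreducibility of the cylinder, hence of $\M$, is used, and where one should pass to the complexification in order to argue cleanly about dimension and generic rank; everything else is bookkeeping.
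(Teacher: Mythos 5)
Your argument is correct. Note that the paper does not prove this statement at all: it is imported verbatim from \cite[Theorem 12]{J&N}, and your proof is essentially the argument given there (and in Connelly's and Gortler--Healy--Thurston's treatment of the Euclidean case): the stresses of $(G,p)$ are the orthogonal complement of $\operatorname{image}(\ud g|_p)$, which for generic $p$ equals the tangent space to the measurement variety $W$ at $g(p)$, while for an arbitrary equivalent $q$ the image of $\ud g|_q$ lands inside the Zariski tangent space of $W$ at the same (smooth) point. The one technicality you flag -- that a point of $\M=C^V$ of transcendence degree $2|V|$ avoids both the rank-drop locus and $g^{-1}(\operatorname{Sing}W)$, argued via the complexification of the (irreducible, $\bQ$-defined) cylinder -- is genuinely needed but entirely standard, so I see no gap.
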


An overview of the proof of Theorems  \ref{thm:full} and
\ref{thm:globrigid} is as follows. A key step in showing that
2-connectivity and redundant rigidity are sufficient to imply
generic global rigidity for $\M_{2,2}^*$-circuits is to show inductively that every generic
realisation of an $\M_{2,2}^*$-circuit on the cylinder has a maximum
rank equilibrium stress. The most straightforward way to do this
would be to show that the operations used in Theorem
\ref{thm:recurse} preserve the property of having such a stress, but
we are unable to show that the join operations do this. Instead we
use Theorem \ref{thm:recurse} to obtain a new recursive construction
which uses operations which we can show preserve this property. We
derive this construction in Section \ref{sec:recurse}, give formal
definitions for infinitesimal rigidity and equilibrium stresses  in
Section \ref{sec:rig}, and show that each of the operations in our
recursive construction preserves the stress property in Section
\ref{sec:vsplit}. A technical detail in the proof in Section
\ref{sec:vsplit}, that the equilibrium stress is nowhere zero, is
dealt with in Section \ref{sec:vfree} and gives rise to a
characterisation of rigidity for generic frameworks on the cylinder
in which one designated vertex is allowed to move freely in
$\mathbb{R}^3$.

Given the result that every generic realisation of an
$\M_{2,2}^*$-circuit has a maximum rank equilibrium stress,  we can
apply Theorem \ref{thm:genstressPartial} to deduce that any two
equivalent generic realisations of an $\M_{2,2}^*$-circuit on the
cylinder have related maximum rank equilibrium stresses. If we could
show that the stresses were identical then we could immediately
deduce that the two realisations are congruent using  \cite[Theorem
9]{J&N}. Since we cannot show this, we instead use the above
mentioned relation between the two stresses to first deduce that the
two frameworks are `VR-equivalent' i.e. the projections of the
vertices of the frameworks onto the axis of the cylinder are linked
by a dilation. We then apply a characterization of `global
VR-rigidity' (which we subsequently derive in Section
\ref{sec:restricted}) to deduce that the two realisations are
congruent.  This gives Theorem \ref{thm:globrigid}. We then use an
`ear-decomposition' of the rigidity matroid to extend the
characterization to arbitrary graphs and obtain Theorem
\ref{thm:full}. Both characterisations are derived in Section
\ref{sec:globalfull}. We close with a short section which points out
that the property of having a maximum rank stress matrix is a
sufficient condition for generic global rigidity on the cylinder (a
result we had previously conjectured in \cite[Conjecture 1]{J&N}),
and that our characterization of global rigidity on the cylinder
also holds for a generic framework in $\mathbb{R}^3$ with the added
constraints that the distances of the vertices from a given line are
fixed.

\section{Recursive construction}
\label{sec:recurse}

We will refine the recursive construction for $\M_{2,2}^*$-circuits
given in Theorem \ref{thm:recurse}. The aim  is to make the steps in
the recursion as simple as possible since we have to show they
preserve global rigidity, or, more precisely, preserve the property
of having a maximum rank equilibrium stress. To this end we
introduce two `new' operations. Given an $\M_{2,2}^*$-circuit
$G=(V,E)$, the first operation, {\em $K_4^-$-extension}, is just a
1-join of $G$ and $H_1$. The second operation, {\em generalised
vertex split}, is defined as follows: choose $v\in V$ and a
partition $N_1,N_2$ of the neighbours of $v$; then delete $v$ from
$G$ and add two new vertices $v_1,v_2$ joined to $N_1,N_2$,
respectively; finally add two new edges $v_1v_2,v_1x$ for some $x\in
V\sm N_1$. These operations are illustrated in Figures
\ref{fig:types_a} and \ref{fig:vsplit}.

The usual vertex splitting operation, see \cite{Whi}, is the special case when $x$ is chosen to be a neighbour of $v_2$. Note also that the special case when $v_1$ has degree 3 (and $v_2=v$) is the 1-extension operation. At times it will be convenient to work directly with the 1-extension operation itself, for example in Theorem \ref{thm:nearly3con} below.

\begin{center}
\begin{figure}[ht]
\centering
\begin{tikzpicture}[scale=0.8]

\draw (-4,10) circle (27pt);
\draw (4,10) circle (27pt);

\filldraw (-2,10.5) circle (2pt) node[anchor=south]{$v_1$};
\filldraw (-2,9.5) circle (2pt) node[anchor=north]{$v_2$};

\draw[black]
(-3.5,10.6) -- (-2,10.5) -- (-2,9.5) -- (-3.5,9.7);

\draw[black]
(-3.5,10.4) -- (-2,10.5);

\draw[black]
(-3.7,9.5) -- (-2,9.5) -- (-3.6,9.3);

\draw[black]
(0,10) -- (1,10) -- (0.9,9.9);

\draw[black]
(1,10) -- (0.9,10.1);


\filldraw (6,10.5) circle (2pt) node[anchor=south]{$v_1$};
\filldraw (6,9.5) circle (2pt) node[anchor=north]{$v_2$};
\filldraw (7,10.5) circle (2pt) node[anchor=south]{$u_1$};
\filldraw (7,9.5) circle (2pt) node[anchor=north]{$u_2$};

\draw[black]
(4.5,10.6) -- (6,10.5) -- (7,10.5) -- (7.,9.5) -- (6,9.5) -- (7,10.5);

\draw[black]
(6,9.5) -- (4.5,9.7);

\draw[black]
(4.5,10.4) -- (6,10.5) -- (7,9.5);

\draw[black]
(4.3,9.5) -- (6,9.5) -- (4.4,9.3);

\end{tikzpicture}
\caption{$K_4^-$-extension.} \label{fig:types_a}
\end{figure}
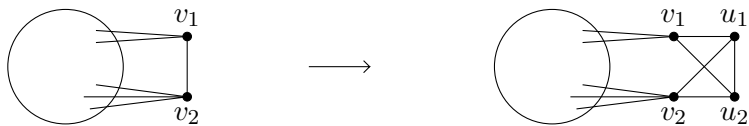

\end{center}

\begin{center}
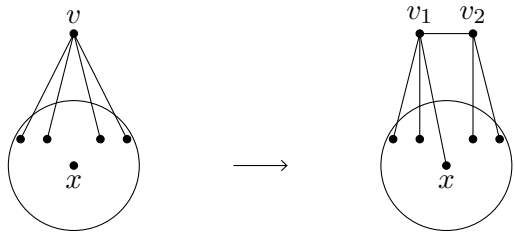
\begin{figure}[ht]
\begin{tikzpicture}[scale=0.7]
\draw (-3,-5) circle (35pt);
\draw (4,-5) circle (35pt);

\filldraw (-3,-2.5) circle (2pt) node[anchor=south]{$v$};
\filldraw (3.5,-2.5) circle (2pt) node[anchor=south]{$v_1$};
\filldraw (4.5,-2.5) circle (2pt) node[anchor=south]{$v_2$};

\filldraw (-3,-5) circle (2pt) node[anchor=north]{$x$};
\filldraw (4,-5) circle (2pt) node[anchor=north]{$x$};

\filldraw (-4,-4.5) circle (2pt);
\filldraw (-3.5,-4.5) circle (2pt);
\filldraw (-2.5,-4.5) circle (2pt);
\filldraw (-2,-4.5) circle (2pt);
\filldraw (3,-4.5) circle (2pt);
\filldraw (3.5,-4.5) circle (2pt);
\filldraw (4.5,-4.5) circle (2pt);
\filldraw (5,-4.5) circle (2pt);

\draw[black]
(0,-5) -- (1,-5) -- (0.9,-5.1);

\draw[black]
(1,-5) -- (0.9,-4.9);

\draw[black]
(-4,-4.5) -- (-3,-2.5) -- (-3.5,-4.5);

\draw[black]
(-2.5,-4.5) -- (-3,-2.5) -- (-2,-4.5);

\draw[black]
(3,-4.5) -- (3.5,-2.5) -- (3.5,-4.5);

\draw[black]
(4.5,-4.5) -- (4.5,-2.5) -- (5,-4.5);

\draw[black]
(4,-5) -- (3.5,-2.5) -- (4.5,-2.5);

\end{tikzpicture}
\caption{Generalised vertex split.} \label{fig:vsplit}
\end{figure}

\end{center}

We can now state our  simplified recursive
construction for $\M_{2,2}^*$-circuits.

\begin{thm}\label{thm:strongrecurse}
Suppose $G$ is an $\M_{2,2}^*$-circuit. Then
$G$ can be obtained from either $K_5^-$ or $H_1$ by recursively
applying the operations of  $K_4^-$-extension and generalised vertex split, in such a way that
each of the intermediate graphs is an $\M_{2,2}^*$-circuit.
\end{thm}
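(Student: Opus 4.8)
The plan is to bootstrap off the existing recursive construction in Theorem~\ref{thm:recurse}, replacing the three $i$-join operations (and the generating graph $H_2$) by the two new operations of $K_4^-$-extension and generalised vertex split. Since $1$-extension is already the special case of generalised vertex split in which $v_1$ has degree $3$, and $K_4^-$-extension is by definition a $1$-join with $H_1$, the only real work is to show that a $1$-, $2$-, or $3$-join of $\M_{2,2}^*$-circuits can be re-derived, within the class of $\M_{2,2}^*$-circuits, using $K_4^-$-extensions and generalised vertex splits. So I would argue: take the derivation of $G$ from Theorem~\ref{thm:recurse}; by induction on the number of steps it suffices to show that if $G = G_1 *_j G_2$ with $G_1$ a smaller $\M_{2,2}^*$-circuit and $G_2 \in \{K_5-e, H_1, H_2\}$ (one can always arrange the join so that the ``new'' side is one of the three base graphs, or more generally an $\M_{2,2}^*$-circuit already obtainable by our operations), then $G$ is obtainable from $G_1$ by a sequence of $K_4^-$-extensions and generalised vertex splits passing only through $\M_{2,2}^*$-circuits.

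The first concrete step is to dispose of $H_2$: I would show that $H_2$ itself is obtainable from $K_5-e$ or $H_1$ by our two operations (a direct finite check using the explicit vertex labels in Figure~\ref{fig:smallgraphs}), so that the base cases of the new recursion reduce to $K_5-e$ and $H_1$. The second step handles the $1$-join: a $1$-join with $H_1$ \emph{is} a $K_4^-$-extension, so $1$-joins with $H_1$ are free; for a $1$-join with $K_5-e$ or $H_2$ I would realise the attached copy by first doing a $K_4^-$-extension and then modifying it with a bounded number of generalised vertex splits (again verified on the explicit small graphs, using Theorem~\ref{thm:circuit_dec} to certify that every intermediate graph stays an $\M_{2,2}^*$-circuit). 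The third step, the $2$- and $3$-joins, is where the main effort lies: here the two graphs $G_1, G_2$ are glued along two or three identified vertices carrying a $K_4^-$ or $K_4$ pattern, and I would show that the gluing can be simulated by performing, inside $G_1$, first a suitable generalised vertex split at one of the join vertices to create the extra local structure, followed by $K_4^-$-extensions to install the rest of $G_2$, then finishing with generalised vertex splits to correct the neighbourhoods. Throughout, the count $|E| = 2|V|-2$ is automatically preserved by both operations (each adds one vertex and two edges, or in the $K_4^-$-extension case adds two vertices and four edges), and circuit-ness of the intermediate graphs is checked via Theorem~\ref{thm:circuit_dec} together with the fact that our operations are compatible with the $i$-join structure.

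The step I expect to be the genuine obstacle is the $2$- and $3$-join simulation: one has to choose the partition $N_1, N_2$ in the generalised vertex split and the target vertex $x \in V \sm N_2$ carefully so that (i) the resulting graph is still a circuit in $\M_{2,2}^*$ and (ii) after a bounded sequence of further moves one lands exactly on $G_1 *_j G_2$ rather than on some graph that merely looks similar. I would manage this by exploiting the rigidity (via Theorem~\ref{thm:circuit_dec}) of the join structure: each $i$-join has a prescribed small graph on the attaching side, so the sequence of moves needed is itself finite and can be written down once and for all for each of the finitely many $(j, G_2)$ combinations, with the only parameter being which vertex of $G_1$ plays the role of a join vertex. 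Assembling these finitely many ``gadget constructions'' and verifying $\M_{2,2}^*$-circuit membership at each intermediate stage completes the induction and hence the proof.
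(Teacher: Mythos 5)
Your strategy runs in the opposite direction to the paper's. The paper does not attempt to simulate the join operations of Theorem~\ref{thm:recurse}; instead it proves a reduction theorem (Theorem~\ref{strongadmiss}): every $\M_{2,2}^*$-circuit other than $K_5-e$ and $H_1$ admits either a $K_4^-$-reduction or an admissible edge-reduction (the inverse of a generalised vertex split) producing a smaller circuit, from which Theorem~\ref{thm:strongrecurse} follows immediately by induction. That reduction theorem is proved by induction on $|V|$ using the atom structure of nontrivial $2$-vertex- and $3$-edge-separations together with Theorem~\ref{thm:nearly3con}, a strengthening of Nixon's admissible-node theorem to circuits whose only $2$-vertex-separations are trivial. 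Crucially, when $G=G_1*_jG_2$ the reduction found is typically an admissible node located \emph{inside} an atom, away from the join, rather than any local rewriting of the join itself.

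The genuine gap in your proposal is the treatment of $2$- and $3$-joins. It is not true that ``one can always arrange the join so that the new side is one of the three base graphs'': Theorem~\ref{thm:recurse} permits $G=G_1*_jG_2$ with both $G_1$ and $G_2$ arbitrary large circuits (for instance the $2$-join of two copies of a large $3$-connected circuit), so there are not finitely many $(j,G_2)$ combinations and no once-and-for-all gadget list. The fallback you gesture at --- inducting on the construction sequence of $G_2$ and transplanting it across the join --- is not developed and faces real obstructions: the designated structure consumed by the join (the $K_4$ for a $2$-join, the degree-three vertex and its neighbours for a $3$-join) need not exist at intermediate stages of $G_2$'s construction, so $G_1*_jG_2'$ need not even be defined; and when it is, the operation taking $G_2'$ to $G_2$ may act on precisely the vertices the join deletes or identifies. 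You would also need to check simplicity of every intermediate graph, since the matroid is the \emph{simple} $(2,2)$-sparse matroid and generalised vertex splits can create parallel edges. As written, the hardest step is asserted rather than proved, and repairing it would amount to proving something equivalent to Theorem~\ref{strongadmiss}. (A minor slip: $\M_{2,2}^*$-circuits satisfy $|E|=2|V|-1$, not $|E|=2|V|-2$.)
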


Since the $K_4^-$-extension
operation is a special case of the 1-join operation, it
must necessarily preserve the property of being an $\M_{2,2}^*$-circuit
by Theorem \ref{thm:circuit_dec}. The 1-extension operation also preserves the property of being an $\M_{2,2}^*$-circuit by \cite[Lemma 2.1]{Nix}, but the
generalised vertex split operation may not.

We will refer to the inverse operations to those used in Theorem
\ref{thm:strongrecurse} as {\em $K_4^-$-reduction} and {\em edge-reduction}, respectively, and to the inverse of the 1-extension operation as {\em $1$-reduction}. (Thus edge-reduction chooses two adjacent edges $e,f$ and then deletes $e$ and contracts $f$.) We say that an
application of these operations is {\em admissible} if, when
we apply it to an $\M_{2,2}^*$-circuit, we obtain a smaller $\M_{2,2}^*$-circuit. Theorem
\ref{thm:circuit_dec} implies that $K_4^-$-reduction will always be admissible but the operations of
1-reduction and, more generally, edge-reduction may not.

We say that a vertex is a \emph{node} of a graph if it has degree three, and that a node $v$ in an $\M_{2,2}^*$-circuit $G$ is {\em admissible} if we
can construct a smaller $\M_{2,2}^*$-circuit from $G$ by applying a 1-reduction
operation at $v$.
A graph is \emph{essentially $4$-edge-connected} if it is 3-edge-connected and the only edge cuts of size 3 consist of 3 edges incident to a node.
It was shown in \cite[Theorem 1.2]{Nix} that every
3-connected, essentially 4-edge-connected $\M_{2,2}^*$-circuit other than $K_5^-$
has at least two admissible nodes. We first extend this result by
allowing certain 2-vertex-cuts and 3-edge-cuts. We need the following definitions.

A {\em $2$-vertex-separation} of a graph $G$ is a pair of induced
subgraphs $F_1=(V_1,E_1)$ and $F_2=(V_2,E_2)$ such that $F_1\cup
F_2=G$, $|V_1\cap V_2|=2$ and $V_1\sm V_2\neq \emptyset\neq V_2\sm
V_1$. The 2-separation $(F_1,F_2)$ is {\em nontrivial} if $F_i\neq
K_4$ for each $i=1,2$. A {\em $3$-edge-separation} is a pair of
vertex-disjoint induced subgraphs $(F_1,F_2)$ such that $F_1\cup
F_2=G-S$ for some set $S\subseteq E$ with $|S|=3$. It is {\em
nontrivial} if $S$ is a set of three independent edges in $G$. An
{\em atom} of $G$ is a subgraph $F$ such that $F$ is an element of a
nontrivial 2-vertex separation or 3-edge-separation of $G$ and no
proper subgraph of $F$ has this property.

We also need to extend the concepts of circuits and admissible nodes to
multigraphs. The {\em $(2,2)$-sparse matroid $\M_{2,2}$} is the matroid on the edge set of a large complete multigraph in which a set of edges $F$ is independent if $|F'|\leq 2|V(F')|-2$ for all $\emptyset\neq F'\subseteq F$. Suppose that $H$ is an
{\em $\M_{2,2}$-circuit} i.e. $E(H)$ is a circuit in $\M_{2,2}$. We say that a node
$v$ of $H$ is {\em allowable} if applying the 1-reduction operation
at $v$ produces a smaller $\M_{2,2}$-circuit and does not create
any new multiple edges.

\begin{thm}\label{thm:nearly3con}
Suppose $G$ is an $\M_{2,2}^*$-circuit which is distinct from $K_5^-$, $H_1$ and $H_2$, and that $G$
has no nontrivial $2$-vertex separation and no nontrivial
$3$-edge-separation. Then $G$ has at least two admissible nodes.
\end{thm}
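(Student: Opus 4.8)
The plan is to reduce to the known result \cite[Theorem 1.2]{Nix}, which asserts that every $3$-connected, essentially $4$-edge-connected $\M_{2,2}^*$-circuit other than $K_5-e$ has at least two admissible nodes. The hypotheses here are weaker: we only assume that $G$ has no \emph{nontrivial} $2$-vertex separation and no \emph{nontrivial} $3$-edge-separation. So the first task is to understand precisely how $G$ can fail to be $3$-connected or essentially $4$-edge-connected under these weaker hypotheses. Since $G$ is an $\M_{2,2}^*$-circuit it is $2$-connected (a $1$-separation would violate the circuit count, as $E$ would decompose into two smaller independent-or-dependent pieces), and similarly every edge cut has size at least three. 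Thus the only obstructions are: (i) a (trivial) $2$-vertex separation in which one side is a $K_4$, and (ii) a (trivial) $3$-edge-separation in which the three cut edges share a common endpoint, i.e.\ a degree-$3$ vertex (a node).

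First I would handle obstruction~(ii) directly. A trivial $3$-edge-separation is exactly the neighbourhood of a node $v$, and one checks using the sparsity count that performing a $1$-reduction at \emph{any} node of $G$ whose three neighbours are not already pairwise adjacent yields a smaller $\M_{2,2}^*$-circuit; the count $2|V|-2$ is preserved by $1$-reduction, and the circuit property is preserved provided no forbidden double edge or small violating subgraph is created — this is where \cite[Lemma 2.1]{Nix} and its proof technique are used. So a node $v$ fails to be admissible only if its three neighbours induce a configuration forcing a violation, which essentially means $v$ together with its neighbourhood forms (part of) a $K_4^-$ or $K_4$; in particular $v$ lies in a small dense cluster. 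The strategy is then to argue that if $G\neq K_5-e,H_1,H_2$ has many inadmissible nodes, these dense clusters can be pieced together to produce a nontrivial $2$-vertex or $3$-edge separation, contradicting the hypothesis.

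Next I would handle obstruction~(i): suppose $(F_1,F_2)$ is a $2$-vertex separation with $F_1=K_4$. Let $\{a,b\}=V(F_1)\cap V(F_2)$, so $F_1$ is a $K_4$ on $\{a,b,c,d\}$ with $c,d$ of degree $3$ in $G$ and joined precisely to $a,b$ and each other. The two nodes $c,d$ sit inside this $K_4$. Here I would look at the graph $G'$ obtained by contracting or reducing the $K_4$-part — concretely, $G'$ is an $\M_{2,2}^*$-circuit on fewer vertices (this is the $K_4^-$-reduction / the inverse of the $K_4^-$-extension, which Theorem~\ref{thm:circuit_dec} guarantees is admissible) — and then transfer admissible nodes of $G'$ back up to $G$. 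Nodes of $G'$ away from the separating pair remain nodes of $G$ and remain admissible; the only issue is nodes of $G'$ that are adjacent to both $a$ and $b$, which could interact with the reinserted $K_4$. One shows that at most a bounded number of admissible nodes of $G'$ can be "lost" in passing to $G$, while by induction $G'$ has at least two (or, when $G'$ is one of the exceptional small circuits, one treats those base cases by hand, using that $H_1,H_2$ and $K_5-e$ are small and explicit — and crucially that $G\neq K_5-e,H_1,H_2$ forces $G'$ to be genuinely smaller or forces extra structure), giving the needed two admissible nodes of $G$.

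The main obstacle I expect is the bookkeeping in the inductive step: controlling exactly which admissible nodes survive the reduction, and making sure the exceptional cases $K_5-e$, $H_1$, $H_2$ are the only genuine exceptions rather than artifacts of a loose argument. In particular, when $G'$ happens to equal one of $K_5-e$, $H_1$, $H_2$, the inductive hypothesis gives nothing, so one must check by direct inspection of the finitely many ways to perform a $K_4^-$-extension (or undo a trivial $2$-separation) on these three small graphs that the resulting $G$ still has two admissible nodes — unless $G$ itself is again one of the three excluded graphs, which is where the hypothesis $G\notin\{K_5-e,H_1,H_2\}$ is consumed. The interplay between the two types of trivial separation (a node can itself lie inside a $K_4$-side of a $2$-separation) also needs care, and I would organize the proof by first removing all $K_4$-type $2$-separations via $K_4^-$-reduction, reducing to the genuinely $3$-connected case, and only then invoking \cite[Theorem 1.2]{Nix} together with the node analysis above.
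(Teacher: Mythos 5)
Your overall plan --- reduce to \cite[Theorem 1.2]{Nix} by eliminating the trivial $2$-separations --- is the right idea and matches the paper's strategy, but your key reduction step contains a genuine error. If $(F_1,F_2)$ is a trivial $2$-separation with $F_1=K_4$ on $\{a,b,c,d\}$ and $\{a,b\}=V(F_1)\cap V(F_2)$, then the edge $ab$ is present in $G$ (since $F_1$ is an \emph{induced} $K_4$), so this configuration is \emph{not} the image of a $K_4^-$-extension: that operation deletes the edge $v_1v_2$ and attaches a $K_4$ \emph{minus} that edge, and the resulting $2$-separation, having one side equal to $K_4^-\neq K_4$, is \emph{nontrivial} under the paper's definition and hence already excluded by hypothesis. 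If you instead simply delete $c,d$ (and the five edges they carry), the resulting graph is $F_2$ with $|E(F_2)|=2|V(G)|-1-5=2|V(F_2)|-2$ edges --- one edge short of a circuit --- so your claim that ``$G'$ is an $\M_{2,2}^*$-circuit on fewer vertices'' is false and your induction cannot start. The paper's fix is precisely to restore the count by adding a \emph{second, parallel} copy of the edge $x_iy_i$ for each such separation, producing a $3$-connected circuit $H$ in the multigraph matroid $\M_{2,2}$; one then checks that $x_i$ and $y_i$ have degree at least $4$ in $H$, that every \emph{allowable} node of $H$ (one whose $1$-reduction yields a smaller $\M_{2,2}$-circuit without creating new parallel edges) is an admissible node of $G$, and finally runs the \emph{proof technique} of \cite[Theorem 1.2]{Nix} (not its statement, which is for simple graphs) on $H$ to produce two allowable nodes. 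Your proposal has no mechanism for the multigraph that inevitably appears, nor for transferring admissibility back to $G$ beyond an unquantified assertion that ``at most a bounded number'' of nodes are lost.

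A second, smaller problem: your paragraph on ``obstruction (ii)'' attacks a non-issue and asserts something unjustified. Trivial $3$-edge-separations (the three edges at a node) are not an obstruction to invoking \cite[Theorem 1.2]{Nix} --- they are exactly what ``essentially $4$-edge-connected'' permits, and every $\M_{2,2}^*$-circuit has nodes. Moreover, a node can fail to be admissible for global reasons (for instance, a pair of its neighbours lying in a tight set far away in the graph), not only because it sits in a local $K_4$ or $K_4^-$; the claim that inadmissible nodes ``lie in a small dense cluster'' which can be assembled into a nontrivial separation is essentially the content of the theorem you are quoting, not something to re-derive in passing. When $G$ is $3$-connected you should simply cite \cite[Theorem 1.2]{Nix} and stop, which is what the paper does.
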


\begin{proof}
If $G$ is 3-connected then the statement is \cite[Theorem 1.2]{Nix}
so we may assume that $G$ is not 3-connected. Since $G$ is an $\M_{2,2}^*$-circuit,  $G$ is 2-connected by \cite[Lemma
2.3]{Nix}.
Since
$G$ has no nontrivial  2-vertex separation and $G\neq H_1$,  every
2-vertex-separation $(F_i,F_j)$ of $G$ has $F_i=K_4$ and $F_j\neq
K_4$. For every such 2-vertex-separation with $V(F_i)\cap V(F_j) =
\{x_i,y_i\}$, we consider the multigraph $H$ formed by deleting
$V(F_i)-\{x_i,y_i\}$ and $E(F_i)$ from $G$, and adding two copies of
the edge $x_iy_i$. Since $G\neq H_2$, $H$ is a 3-connected circuit
in $\M_{2,2}$ with no nontrivial 3-edge-separation. Also, each $x_i$
and $y_i$ has degree at least 4 in $H$; otherwise there would be a
nontrivial 2-vertex-separation in $G$. It follows that every
allowable node in $H$ is  an admissible node of $G$. We can now show
that $H$ contains two allowable nodes using the proof technique of
\cite[Theorem 1.2]{Nix}, see \cite[Section 4]{Nix}.
\end{proof}

Theorem \ref{thm:strongrecurse} will follow immediately from the
following reduction result.

\begin{thm}\label{strongadmiss}
Let $G=(V,E)$ be an $\M_{2,2}^*$-circuit distinct from $K_5^-$ and $H_1$. Then
$G$ has either a $K_4^-$-reduction or an admissible edge-reduction.
\end{thm}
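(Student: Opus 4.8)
The plan is to reduce Theorem~\ref{strongadmiss} to Theorem~\ref{thm:nearly3con} by peeling off the structures — nontrivial $2$-vertex separations and nontrivial $3$-edge-separations — that are excluded from the hypotheses of Theorem~\ref{thm:nearly3con}. First I would dispose of the easy case: if $G$ has no nontrivial $2$-vertex separation and no nontrivial $3$-edge-separation, then either $G\in\{K_5-e,H_1,H_2\}$ or Theorem~\ref{thm:nearly3con} applies. Since $G\neq K_5-e,H_1$ by hypothesis, the only leftover case is $G=H_2$, which I would check by hand produces a $K_4^-$-reduction (indeed $H_2$, as drawn in Figure~\ref{fig:smallgraphs}(c), contains a $K_4$ attached to the rest along two vertices, so it arises as a $1$-join of $H_1$ with a smaller graph $K_5-e$, giving a $K_4^-$-reduction by Theorem~\ref{thm:circuit_dec}); and for $G$ covered by Theorem~\ref{thm:nearly3con} we get two admissible nodes, hence in particular an admissible $1$-reduction, which is a special case of edge-reduction.

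So the substance is the case where $G$ \emph{does} have a nontrivial $2$-vertex separation or a nontrivial $3$-edge-separation. Here I would work with the \emph{atoms} of $G$ as defined in the excerpt. The idea is that an atom $F$ is, up to adding back a bounded number of edges on its (at most two, resp.\ three) boundary vertices, itself an $\M_{2,2}^*$-circuit on a strictly smaller vertex set, and it sits inside $G$ in a controlled way. I would first establish that an atom $F$ together with the appropriate completion on its boundary is an $\M_{2,2}^*$-circuit — for a $2$-vertex boundary $\{x,y\}$ one adjoins the edge $xy$ (or a $K_4$ on $\{x,y\}$ and two new vertices, depending on the separation used), and for a $3$-edge boundary one contracts or adds a vertex; Theorem~\ref{thm:circuit_dec} is exactly the tool that transfers the circuit property across $j$-joins. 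Then, choosing $F$ to be an atom and applying either Theorem~\ref{thm:nearly3con} (if the completed atom has no further nontrivial separations) or induction on $|V|$ to the completed atom, I obtain an admissible $1$-reduction or $K_4^-$-reduction \emph{inside} the atom. The final step is to argue that this reduction, performed inside $G$, is still admissible: because the boundary vertices of an atom have large degree (degree $\geq 4$, as in the proof of Theorem~\ref{thm:nearly3con}), the node being reduced lies in the interior of the atom, so the reduction does not interact with the rest of $G$, and sparsity counts local to the atom combined with Theorem~\ref{thm:circuit_dec} give that the result is again an $\M_{2,2}^*$-circuit. If instead the completed atom \emph{is} one of the exceptional graphs $K_5-e,H_1,H_2$, then (as in the first paragraph) the atom can be realised as a $1$-join with $H_1$, yielding a $K_4^-$-reduction of $G$ directly.

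The main obstacle I expect is the bookkeeping around atoms: verifying that a minimal element $F$ of a nontrivial separation, once completed on its boundary, really is an $\M_{2,2}^*$-circuit and not merely $(2,2)$-tight, and simultaneously that $F$ is small enough (and its boundary vertices have degree $\geq 4$) so that the reduction we extract lands in the interior and survives transplantation back into $G$. This is where one must be careful about the distinction between $\M_{2,2}^*$ (the simple matroid, with the strict inequality at $|F'|=2$) and $\M_{2,2}$ on multigraphs, because completing a $3$-edge-separation can create parallel edges on a boundary pair; handling those parallel edges correctly — deleting them, or absorbing them via the ``allowable node'' notion used in Theorem~\ref{thm:nearly3con} — is the delicate point. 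I would organise the argument by an induction on $|V(G)|$, with the base cases $K_5-e$, $H_1$ (vacuous, excluded) and $H_2$ (checked directly), the inductive step splitting into the ``no nontrivial separation'' case (Theorem~\ref{thm:nearly3con}) and the ``atom'' case just described, and in each case producing either a $K_4^-$-reduction or an admissible edge-reduction, which is exactly what the statement demands.
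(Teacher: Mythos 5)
Your overall skeleton (atoms, Theorem~\ref{thm:nearly3con}, and Theorem~\ref{thm:circuit_dec} to transfer the circuit property across joins) matches the paper's, but two of your key claims fail, and they are exactly where the real work lies. First, your treatment of the exceptional graphs is wrong. $H_2$ does \emph{not} admit a $K_4^-$-reduction: its only candidate pairs of adjacent degree-$3$ vertices ($v_5,v_6$ and $v_2,v_3$) have their two common neighbours already joined by an edge, and a $1$-join of $H_1$ with a $3$-vertex graph is impossible by the vertex count. $H_2$ must instead be handled by an admissible edge-reduction down to $H_1$ (it is a generalised vertex split of $H_1$). The same problem recurs whenever the completed atom $G_1$ is one of $K_5-e$, $H_1$, $H_2$: only the subcase $G_1=H_1$ with $xy\notin E$ genuinely gives a $K_4^-$-reduction. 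In the other exceptional subcases the paper has to exhibit bespoke admissible edge-reductions, several of which delete an edge inside the atom but \emph{contract an edge of $F_2$}, i.e.\ on the far side of the separation; your proposal has no mechanism for producing these.

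Second, and more seriously, your transplantation step rests on the claim that the admissible node supplied by Theorem~\ref{thm:nearly3con} lies in the interior of the atom because the boundary vertices have degree at least $4$. That degree bound holds in the setting of Theorem~\ref{thm:nearly3con} (trivial $K_4$ cuts), but not in the case of a nontrivial $2$-separation with $xy\notin E$ and $|E(F_1)|=2|V(F_1)|-2$, where $G_1=F_1+xy$ and $x,y$ can have degree $3$ in $G_1$ and can be the \emph{only} admissible nodes of $G_1$. This is the paper's Subcase 3.2, and resolving it requires a genuinely different idea: one applies the inductive hypothesis to $G_2$ (the completion of the \emph{non-atom} side), extracts an admissible reduction there, and combines it with the $1$-reduction at $y$ in $G_1$ to build an edge-reduction of $G$ that deletes an edge $ay$ of $F_1$ and contracts an edge of $F_2$, verifying admissibility via an identity of the form $G'=G_1'*_1H_2*_1G_2''$. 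Your induction, by contrast, is aimed at the completed atom, which (being an atom) never needs induction at all, so your argument has no route through this case.
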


\begin{proof}
We proceed by induction on $|V|$.  If $G$ has no nontrivial
2-separation or 3-edge-separation then either $G=H_2$ and can be
reduced to $H_1$ by an admissible edge-reduction operation, or else $G$ has two
admissible nodes by Theorem \ref{thm:nearly3con}, and hence has two admissible edge-reductions. Thus we may
suppose that this is not the case. It follows that $G$ has at least
two distinct atoms. Let $F_1$ be an atom of $G$ and $(F_1,F_2)$ be
the nontrivial separation which contains $F_1$. Consider the
following
cases.\\[1mm]
{\bf Case 1}: $(F_1,F_2)$ is a nontrivial 3-edge-separation. Let
$G_i$ be obtained from $G$ by contracting $F_{3-i}$ to a single
vertex $z_i$ for each $i\in \{1,2\}$. Then $G=G_1*_3 G_2$ so
$G_1,G_2$ are $\M_{2,2}^*$-circuits by Theorem \ref{thm:circuit_dec}. Since $F_1$
is an atom, $G_1$ has no nontrivial 2-vertex-separation or
3-edge-separation. If $G_1\not\in \{K_5^-,H_1,H_2\}$ then $G_1$ has
an admissible node $v$ distinct from $z_1$ by Theorem
\ref{thm:nearly3con}, and $v$ will be an admissible node in $G$ by Theorem \ref{thm:circuit_dec}.

If
$G_1=K_5^-$ then $G$ is as shown in Figure \ref{fig:case1a}. We can apply an edge-reduction operation which deletes the edge $xw$ and then contracts the edge $xa$ to construct the graph $G_2'$ on the right of Figure \ref{fig:case1a}. Then $G_2'$ is an $\M_{2,2}^*$-circuit since it can be obtained from $G_2$ by two $1$-extensions. Hence this edge-reduction operation is admissible. If $G_1=H_2$ then we will contradict the assumption that
$F_1$ is an atom of $G$.
 It remains to consider the subcase when
$G_1=H_1$, which is illustrated in Figure \ref{fig:case1}. In this
case the vertex $v$ is admissible, since performing a 1-reduction on
$v$ gives the graph $G_3$ on the right of Figure \ref{fig:case1},
and we have $G_3=G_2*_1 H_2$ so $G_3$ is an $\M_{2,2}^*$-circuit by Theorem
\ref{thm:circuit_dec}.

\begin{center}
\begin{figure}
\begin{tikzpicture}[scale=0.7]
\draw (0,0) circle (35pt);
\draw (10,0) circle (35pt);

\filldraw (0.3,.8) circle (2pt) node[anchor=east]{$a$};
\filldraw (0.3,0) circle (2pt) node[anchor=east]{$b$};
\filldraw (0.3,-.8) circle (2pt) node[anchor=east]{$c$};

\filldraw (3.5,0) circle (2pt) node[anchor=north]{$w$};
\filldraw (5,-.5) circle (2pt) node[anchor=north]{$z$};
\filldraw (3.5,1) circle (2pt) node[anchor=south]{$x$};
\filldraw (5,1) circle (2pt) node[anchor=south]{$y$};

\draw[black]
(0.3,.8) -- (3.5,1) -- (5,1) -- (5,-.5) -- (3.5,0) -- (0.3,0);

\draw[black]
(3.5,0) -- (3.5,1);

\draw[black]
(3.5,1) -- (5,-.5);

\draw[black]
(5,1) -- (3.5,0);

\draw[black]
(0.3,-.8) -- (5,-.5);

  \node [rectangle, draw=white, fill=white] (b) at (2.5,-2) {$G$};

\filldraw (10.3,.8) circle (2pt) node[anchor=east]{$a$};
\filldraw (10.3,0) circle (2pt) node[anchor=east]{$b$};
\filldraw (10.3,-.8) circle (2pt) node[anchor=east]{$c$};

\filldraw (13.5,0) circle (2pt) node[anchor=north]{$w$};
\filldraw (15,-.5) circle (2pt) node[anchor=north]{$z$};
\filldraw (15,1) circle (2pt) node[anchor=south]{$y$};

\draw[black]
 (10.3,.8) -- (15,1) -- (15,-.5) -- (13.5,0) -- (10.3,0);

\draw[black]
(15,1) -- (13.5,0) -- (10.3,.8);

\draw[black]
(10.3,-.8) -- (15,-.5);

\draw[black]
(6.5,0) -- (7.5,0) -- (7.4,0.1);

\draw[black]
(7.5,0) -- (7.4,-.1);

  \node [rectangle, draw=white, fill=white] (b) at (12.5,-2) {$G_2'$};

\end{tikzpicture}
\caption{The subcase $G_1=K_5^-$ in Case 1.} \label{fig:case1a}
\end{figure}
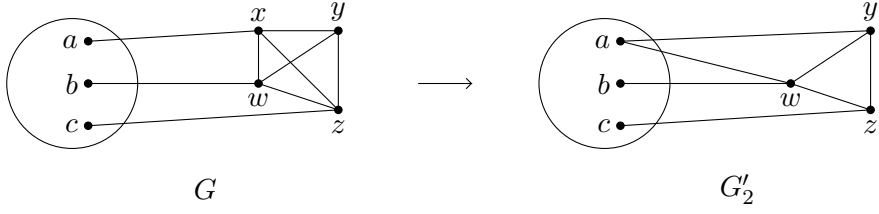
\end{center}

\begin{figure}
\begin{tikzpicture}[scale=0.7]
\draw (0,0) circle (35pt);
\draw (10,0) circle (35pt);

\filldraw (0.3,.8) circle (2pt) node[anchor=east]{$a$};
\filldraw (0.3,0) circle (2pt) node[anchor=east]{$b$};
\filldraw (0.3,-.8) circle (2pt) node[anchor=east]{$c$};

\filldraw (3.5,-.5) circle (2pt) node[anchor=north]{$w$};
\filldraw (5,-.5) circle (2pt) node[anchor=north]{$z$};
\filldraw (3.5,1) circle (2pt) node[anchor=south]{$x$};
\filldraw (5,1) circle (2pt) node[anchor=south]{$y$};

\filldraw (2.5,-1) circle (2pt) node[anchor=north]{$v$};

\draw[black]
(0.3,.8) -- (3.5,1) -- (5,1) -- (5,-.5) -- (3.5,-.5) -- (2.5,-1) -- (0.3,-.8);

\draw[black]
(0.3,0) -- (3.5,-.5) -- (3.5,1) -- (2.5,-1);

\draw[black]
(3.5,1) -- (5,-.5);

\draw[black]
(5,1) -- (3.5,-.5);

  \node [rectangle, draw=white, fill=white] (b) at (2.5,-2.5) {$G$};

\filldraw (10.3,.8) circle (2pt) node[anchor=east]{$a$};
\filldraw (10.3,0) circle (2pt) node[anchor=east]{$b$};
\filldraw (10.3,-.8) circle (2pt) node[anchor=east]{$c$};

\filldraw (13.5,-.5) circle (2pt) node[anchor=north]{$w$};
\filldraw (15,-.5) circle (2pt) node[anchor=north]{$z$};
\filldraw (13.5,1) circle (2pt) node[anchor=south]{$x$};
\filldraw (15,1) circle (2pt) node[anchor=south]{$y$};

\draw[black]
(10.3,.8) -- (13.5,1) -- (15,1) -- (15,-.5) -- (13.5,-.5);

\draw[black]
(10.3,0) -- (13.5,-.5) -- (13.5,1) -- (15,-.5);

\draw[black]
(10.3,-.8) -- (13.5,-.5);

\draw[black]
(15,1) -- (13.5,-.5);

\draw[black]
(6.5,0) -- (7.5,0) -- (7.4,0.1);

\draw[black]
(7.5,0) -- (7.4,-.1);

  \node [rectangle, draw=white, fill=white] (b) at (12.5,-2.5) {$G_3$};

\end{tikzpicture}
\caption{The subcase $G_1=H_1$ of Case 1.} \label{fig:case1}
\end{figure}
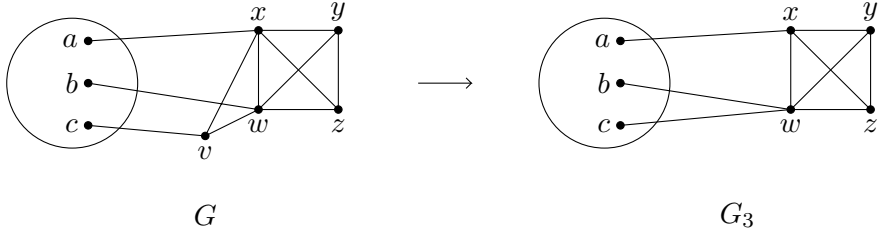

\smallskip
\noindent {\bf Case 2}: $(F_1,F_2)$ is a nontrivial
2-vertex-separation, $V(F_1)\cap V(F_2)=\{x,y\}$ and $E(F_1)\cap
E(F_2)=\{xy\}$.
Let $G_i$ be obtained from $F_i$ by adding two new vertices
$\{w,z\}$ and five new edges $\{wx,wy,wz,xz,yz\}$. Then $G=G_1*_2
G_2$ so $G_1$ and $G_2$ are $\M_{2,2}^*$-circuits by Theorem
\ref{thm:circuit_dec}. Since $F_1$ is an atom, $G_1$ has no
nontrivial 2-vertex-separation or 3-edge-separation. We have
$G_1\neq K_5^-$ since $G_1$ is not 3-connected and   $G_1 \neq H_1$
since the 2-separation $(F_1,F_2)$ is nontrivial.

Suppose $G_1 = H_2$. Then $G$ is as shown in Figure \ref{fig:case2}.
Let $G_3$ be obtained from $G$ by the edge-reduction which deletes
$xy$ and then contracts $uy$. This an admissible edge-reduction
since $G=H_2*_2 G_3$ so $G_3$ is an $\M_{2,2}^*$-circuit by Theorem
\ref{thm:circuit_dec}.

Hence we may suppose that $G_1\not\in
\{K_5^-,H_1,H_2\}$. Then $G_1$ has an admissible node $v$ by Theorem
\ref{thm:nearly3con}. The vertex $v$ will be distinct from $w,z$
since they are not admissible, and distinct from $x,y$ since they
are not nodes in $G_1$. Thus $v$ will be an admissible node in $G$.

\begin{center}
\begin{figure}
\begin{tikzpicture}[scale=0.5]
\filldraw (0,2.5) circle (3pt)node[anchor=south]{$x$};
\filldraw (0,.5) circle (3pt)node[anchor=north]{$y$};
\filldraw (-3,3) circle (3pt);
\filldraw (-4,0) circle (3pt);
\filldraw (-2,0) circle (3pt)node[anchor=north]{$u$};

\draw[black,thick]
(0,2.5) -- (0,.5);

 \draw[black,thick]
(0,.5) -- (-2,0) -- (-4,0) -- (-3,3) -- (0,2.5) -- (-4,0);

\draw[black,thick] (-3,3) -- (-2,0) -- (0,2.5);

\draw (1,1.5) circle (57pt);


%
%
%


  \node [rectangle, draw=white, fill=white] (b) at (-1,-2) {$G$};

\filldraw (1.7,1) circle (0pt)node[anchor=south]{$F_2$};

\draw[black]
(4.5,1.5) -- (6,1.5) -- (5.8,1.3);

\draw[black]
(5.8,1.7) -- (6,1.5);

\filldraw (10,2.5) circle (3pt)node[anchor=south]{$x$};
\filldraw (10,.5) circle (3pt)node[anchor=north]{$y$};
\filldraw (7,3) circle (3pt);
\filldraw (8,0) circle (3pt);

%
%

\draw (11,1.5) circle (57pt);

\draw[black,thick]
(10,2.5) -- (10,.5) -- (7,3);

 \draw[black,thick]
(10,.5) -- (8,0);

\draw[black,thick]
(7,3) -- (10,2.5);

\draw[black,thick] (7,3) -- (8,0) -- (10,2.5);


  \node [rectangle, draw=white, fill=white] (b) at (9,-2) {$G_3$};

\filldraw (11.7,1) circle (0pt)node[anchor=south]{$F_2$};
\end{tikzpicture}
\caption{The graphs $G$ and $G_3$ in the subcase $G_1=H_2$ of Case 2.} \label{fig:case2}
\end{figure}
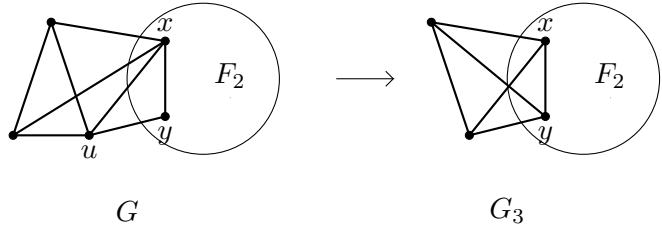
\end{center}

\noindent {\bf Case 3}: $(F_1,F_2)$ is a nontrivial
2-vertex-separation, $V(F_1)\cap V(F_2)=\{x,y\}$ and $xy\not\in E$.
We have $$|E(F_1)|+|E(F_2)|=|E(G)|=2|V(G)|-1=2|V(F_1)|+2|V(F_2)|-5$$
and $|E(F_i)|\leq 2|V(F_i)|-2$ for each $i=1,2$ so $2|V(F_i)|-3\leq
|E(F_i)|\leq 2|V(F_i)|-2$. Consider the following subcases.

\smallskip \noindent {\bf Subcase 3.1}: $|E(F_1)|= 2|V(F_1)|-3$.
Let $G_1$ be obtained from $F_1$ by adding two new vertices
$\{w,z\}$ and six new edges $\{wx,wy,wz,xy,xz,yz\}$, and
$G_2=F_2+xy$. Then $G=G_1*_1 G_2$ so $G_1$ and $G_2$ are $\M_{2,2}^*$-circuits by
Theorem \ref{thm:circuit_dec}. Since $F_1$ is an atom, $G_1$ has no
nontrivial 2-vertex-separation or 3-edge-separation. We have
$G_1\neq K_5^-$ since $G_1$ is not 3-connected. If $G_1=H_1$ then
$G_2$ is a $K_4^-$-reduction of $G$. Hence we may assume
that $G_1\neq H_1$.

Suppose $G_1=H_2$. Then $G$ is as shown in Figure \ref{fig:case3.1}.
Let $G_2'$ be obtained from $G$ by the edge-reduction which deletes
$xu$ and then contracts $uy$. This an admissible edge-reduction
since $G_2'=H_1*_1 G_2$ so $G_2'$ is an $\M_{2,2}^*$-circuit by Theorem
\ref{thm:circuit_dec}.

 Hence we may suppose that $G_1\not\in
\{K_5^-,H_1,H_2\}$. Then $G_1$ has an admissible node $v$ by Theorem
\ref{thm:nearly3con}. The vertex $v$ will be distinct from $w,z$
since they are not admissible, and distinct from $x,y$ since they
are not nodes in $G_1$. Thus $v$ will be an admissible node in $G$.

\begin{center}
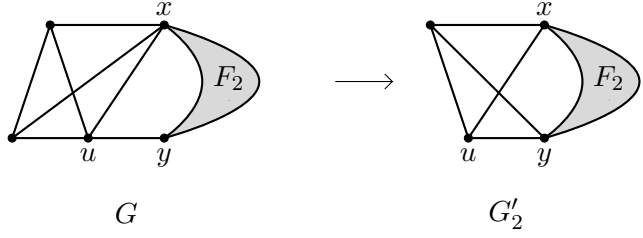
\begin{figure}
\begin{tikzpicture}[scale=0.5]

\draw[draw=gray!30!white,fill=gray!30!white] 
    plot[smooth, tension=1] coordinates{(0,0) (2.5,1.5) (0,3)}  -- 
    plot[smooth, tension=1] coordinates{(0,0) (2.5,1.5) (0,3)};

\draw[draw=white,fill=white] 
    plot[smooth, tension=1] coordinates{(0,0) (1,1.5) (0,3)};

 \draw[white,thick]
(0,0) -- (0,3);

\filldraw (0,3) circle (3pt)node[anchor=south]{$x$};
\filldraw (0,0) circle (3pt)node[anchor=north]{$y$};
\filldraw (-3,3) circle (3pt);
\filldraw (-4,0) circle (3pt);
\filldraw (-2,0) circle (3pt)node[anchor=north]{$u$};

 \draw[black,thick]
(0,0) -- (-2,0) -- (-4,0) -- (-3,3) -- (0,3) -- (-4,0);

\draw[black,thick] (-3,3) -- (-2,0) -- (0,3);

\draw[thick] plot[smooth, tension=1] coordinates{(0,0) (1,1.5)
(0,3)}; \draw[thick] plot[smooth, tension=1] coordinates{(0,0)
(2.5,1.5) (0,3)};

  \node [rectangle, draw=white, fill=white] (b) at (-1,-2) {$G$};

\filldraw (1.7,1) circle (0pt)node[anchor=south]{$F_2$};

\draw[black]
(4.5,1.5) -- (6,1.5) -- (5.8,1.3);

\draw[black]
(5.8,1.7) -- (6,1.5);

\draw[draw=gray!30!white,fill=gray!30!white] 
    plot[smooth, tension=1] coordinates{(10,0) (12.5,1.5) (10,3)}  -- 
    plot[smooth, tension=1] coordinates{(10,0) (12.5,1.5) (10,3)};

\draw[draw=white,fill=white] 
    plot[smooth, tension=1] coordinates{(10,0) (11,1.5) (10,3)};

 \draw[white,thick]
(10,0) -- (10,3);

\filldraw (10,3) circle (3pt)node[anchor=south]{$x$};
\filldraw (10,0) circle (3pt)node[anchor=north]{$y$};
\filldraw (7,3) circle (3pt);
\filldraw (8,0) circle (3pt)node[anchor=north]{$u$};

\draw[black,thick]
(10,0) -- (7,3);

 \draw[black,thick]
(10,0) -- (8,0);

\draw[black,thick]
(7,3) -- (10,3);

\draw[black,thick] (7,3) -- (8,0) -- (10,3);

\draw[thick] plot[smooth, tension=1] coordinates{(10,0) (11,1.5)
(10,3)}; \draw[thick] plot[smooth, tension=1] coordinates{(10,0)
(12.5,1.5) (10,3)};

  \node [rectangle, draw=white, fill=white] (b) at (9,-2) {$G_2'$};

\filldraw (11.7,1) circle (0pt)node[anchor=south]{$F_2$};
\end{tikzpicture}
\caption{The graphs $G$ and $G_2'$ in the subcase $G_1=H_2$ of Subcase 3.1.} \label{fig:case3.1}
\end{figure}
\end{center}

\noindent {\bf Subcase 3.2}: $|E(F_1)|= 2|V(F_1)|-2$.
Let $G_2$ be obtained from $F_2$ by adding two new vertices
$\{w,z\}$ and six new edges $\{wx,wy,wz,xy,xz,yz\}$, and
$G_1=F_1+xy$. Then $G=G_1*_1G_2$ so $G_1$ and $G_2$ are $\M_{2,2}^*$-circuits by
Theorem \ref{thm:circuit_dec}.
We may apply the argument used in Subcase 3.1 (to show that
$G_1\not\in \{K_5^-,H_1,H_2\}$) to deduce that $G_2\not \in
\{K_5^-,H_1,H_2\}$. By induction, $G_2$ has either a
 $K_4^-$-reduction or an admissible
edge-reduction.
If $G_2$ has a $K_4^-$-reduction, then
the same reduction will exist in $G$. Hence we may suppose
that $G_2$ has an admissible edge-reduction which deletes an edge
$e$ then contracts an adjacent edge $f$. If $e,f\in E(F_2)$, then
the same edge-reduction will be admissible in $G$, so we may assume that this is not the case. Since the
edge-reduction is admissible in $G_2$ we must have $f\not\in
\{wx,wy,wz,xy,xz,yz\}$, $e=xy$, and $f$ is incident to either $x$ or
$y$.
Without loss of generality we may suppose that $f$ is incident with
$y$. Let $G_2'$ be the $\M_{2,2}^*$-circuit created by this edge-reduction and
let $G_2''$ be the graph obtained from $G_2'$ by deleting the
vertices $w,z$ and adding the edge $xy$. Since $G_2'=G_2''*_1H_1$,
$G_2''$ is an $\M_{2,2}^*$-circuit by Theorem \ref{thm:circuit_dec}.

We next consider $G_1$. Since $F_1$ is an atom, $G_1$ has no
nontrivial 2-vertex-separation or 3-edge-separation.

Suppose $G_1=K_5^-$. Then $G$ has the structure of one of the
graphs shown in Figure \ref{fig:K_5^-}.
 If case (i) occurs then the 1-reduction of $G$ which deletes $v$ and adds the edge $xy$ gives the $\M_{2,2}^*$-circuit $G_2$,
 so is admissible.
 If case (ii) occurs then the edge-reduction of $G$ which deletes $h$ and contracts $g$  also gives
 the $\M_{2,2}^*$-circuit $G_2$, so is admissible.

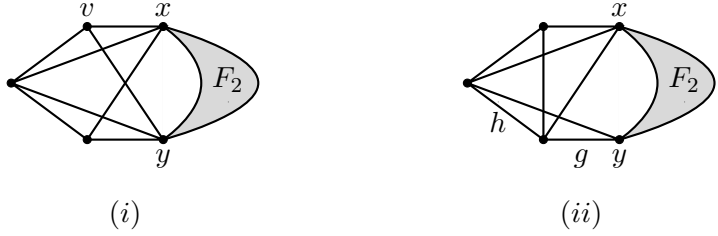
\begin{figure}
\begin{tikzpicture}[scale=0.5]

\draw[draw=gray!30!white,fill=gray!30!white] 
    plot[smooth, tension=1] coordinates{(0,0) (2.5,1.5) (0,3)}  -- 
    plot[smooth, tension=1] coordinates{(0,0) (2.5,1.5) (0,3)};

\draw[draw=white,fill=white] 
    plot[smooth, tension=1] coordinates{(0,0) (1,1.5) (0,3)};

 \draw[white,thick]
(0,0) -- (0,3);

\filldraw (0,3) circle (3pt)node[anchor=south]{$x$};
\filldraw (0,0) circle (3pt)node[anchor=north]{$y$};
\filldraw (-2,3) circle (3pt)node[anchor=south]{$v$};
\filldraw (-4,1.5) circle (3pt);
\filldraw (-2,0) circle (3pt);

 \draw[black,thick]
(0,0) -- (-2,0) -- (-4,1.5) -- (-2,3) -- (0,3) -- (-4,1.5) -- (0,0);

\draw[black,thick]
(-2,3) -- (0,0);

\draw[black,thick]
(-2,0) -- (0,3);

\draw[thick] plot[smooth, tension=1] coordinates{(0,0) (1,1.5)
(0,3)}; \draw[thick] plot[smooth, tension=1] coordinates{(0,0)
(2.5,1.5) (0,3)};

  \node [rectangle, draw=white, fill=white] (b) at (-1,-2) {$(i)$};

\filldraw (1.7,1) circle (0pt)node[anchor=south]{$F_2$};


\draw[draw=gray!30!white,fill=gray!30!white] 
    plot[smooth, tension=1] coordinates{(12,0) (14.5,1.5) (12,3)}  -- 
    plot[smooth, tension=1] coordinates{(12,0) (14.5,1.5) (12,3)};

\draw[draw=white,fill=white] 
    plot[smooth, tension=1] coordinates{(12,0) (13,1.5) (12,3)};

 \draw[white,thick]
(12,0) -- (12,3);

\filldraw (12,3) circle (3pt)node[anchor=south]{$x$};
\filldraw (12,0) circle (3pt)node[anchor=north]{$y$};
\filldraw (10,3) circle (3pt);
\filldraw (8,1.5) circle (3pt);
\filldraw (10,0) circle (3pt);

\filldraw (11,0) circle (0pt)node[anchor=north]{$g$};
\filldraw (8.8,1) circle (0pt)node[anchor=north]{$h$};

 \draw[black,thick]
(12,0) -- (10,0) -- (8,1.5) -- (10,3) -- (12,3) -- (8,1.5) -- (12,0);

\draw[black,thick]
(10,3) -- (10,0) -- (12,3);

\draw[thick] plot[smooth, tension=1] coordinates{(12,0) (13,1.5)
(12,3)}; \draw[thick] plot[smooth, tension=1] coordinates{(12,0)
(14.5,1.5) (12,3)};

  \node [rectangle, draw=white, fill=white] (b) at (11,-2) {$(ii)$};

\filldraw (13.7,1) circle (0pt)node[anchor=south]{$F_2$};
\end{tikzpicture}
\caption{The graph $G$ in the subcase $G_1=K_5^-$ of Subcase 3.2.} \label{fig:K_5^-}
\end{figure}

Suppose $G_1=H_1$. Since $F_1$ is an atom,  $G$ would have the structure of one of the graphs shown in
 Figure \ref{fig:H_1new}.  If case (i) occurs then the 1-reduction of $G$ which deletes $v$ and adds the edge $xy$ gives
 the $\M_{2,2}^*$-circuit $G_2*_2 H_2$,
 so is admissible.
 If case (ii) occurs then the edge-reduction of $G$ which deletes $h$ and contracts $g$  also gives
 the $\M_{2,2}^*$-circuit $G_2*_2 H_2$, so is admissible.

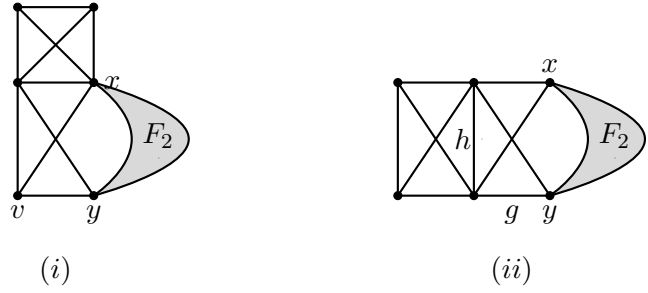
\begin{figure}
\begin{tikzpicture}[scale=0.5]

\draw[draw=gray!30!white,fill=gray!30!white] 
    plot[smooth, tension=1] coordinates{(0,0) (2.5,1.5) (0,3)}  -- 
    plot[smooth, tension=1] coordinates{(0,0) (2.5,1.5) (0,3)};

\draw[draw=white,fill=white] 
    plot[smooth, tension=1] coordinates{(0,0) (1,1.5) (0,3)};

 \draw[white,thick]
(0,0) -- (0,3);

\filldraw (0,3) circle (3pt)node[anchor=west]{$x$};
\filldraw (0,0) circle (3pt)node[anchor=north]{$y$};
\filldraw (-2,3) circle (3pt);
\filldraw (-2,0) circle (3pt)node[anchor=north]{$v$};
\filldraw (-2,5) circle (3pt);
\filldraw (0,5) circle (3pt);

 \draw[black,thick]
(0,0) -- (-2,0) --  (-2,3) -- (0,3);

\draw[black,thick]
(-2,3) -- (0,0);

\draw[black,thick]
(-2,0) -- (0,3) -- (0,5) -- (-2,5) -- (0,3);

\draw[black,thick]
(0,5) -- (-2,3) -- (-2,5);

\draw[thick] plot[smooth, tension=1] coordinates{(0,0) (1,1.5)
(0,3)}; \draw[thick] plot[smooth, tension=1] coordinates{(0,0)
(2.5,1.5) (0,3)};

  \node [rectangle, draw=white, fill=white] (b) at (-1,-2) {$(i)$};

\filldraw (1.7,1) circle (0pt)node[anchor=south]{$F_2$};



\draw[draw=gray!30!white,fill=gray!30!white] 
    plot[smooth, tension=1] coordinates{(12,0) (14.5,1.5) (12,3)}  -- 
    plot[smooth, tension=1] coordinates{(12,0) (14.5,1.5) (12,3)};

\draw[draw=white,fill=white] 
    plot[smooth, tension=1] coordinates{(12,0) (13,1.5) (12,3)};

 \draw[white,thick]
(12,0) -- (12,3);

\filldraw (12,3) circle (3pt)node[anchor=south]{$x$};
\filldraw (12,0) circle (3pt)node[anchor=north]{$y$};
\filldraw (10,3) circle (3pt);
\filldraw (10,0) circle (3pt);
\filldraw (8,0) circle (3pt);
\filldraw (8,3) circle (3pt);

\filldraw (11,0) circle (0pt)node[anchor=north]{$g$};
\filldraw (10.2,1.5) circle (0pt)node[anchor=east]{$h$};

 \draw[black,thick]
(8,3) -- (10,0) -- (8,0) -- (8,3) -- (10,3) -- (12,0) -- (10,0)  -- (10,3) -- (12,3);

\draw[black,thick]
(8,0) -- (10,3);

\draw[black,thick]
(10,0) -- (12,3);

\draw[thick] plot[smooth, tension=1] coordinates{(12,0) (13,1.5)
(12,3)}; \draw[thick] plot[smooth, tension=1] coordinates{(12,0)
(14.5,1.5) (12,3)};

  \node [rectangle, draw=white, fill=white] (b) at (11,-2) {$(ii)$};

\filldraw (13.7,1) circle (0pt)node[anchor=south]{$F_2$};
\end{tikzpicture}
\caption{The graph $G$ in the subcase $G_1=H_1$ of Subcase 3.2.} \label{fig:H_1new}
\end{figure}

Suppose $G_1=H_2$. Since $F_1$ is an atom,  $G$ would have the
structure of the graph shown in Figure \ref{fig:H_2new}(i).  We can now
use the admissible edge-reduction of $G_2$ to $G_2'$ described in the first paragraph of this subcase. Consider the edge-reduction of $G$ which deletes the edge $g$
shown in Figure \ref{fig:H_2new}(i) and contracts the edge $f\in
E(F_2)$. This is admissible since it gives the $\M_{2,2}^*$-circuit $G_2''*_1 H_3$,
where $H_3$ is the $\M_{2,2}^*$-circuit shown in Figure \ref{fig:H_2new}(ii).
(Note that $H_3$ is an $\M_{2,2}^*$-circuit since $H_3=H_1*_1H_2$.)

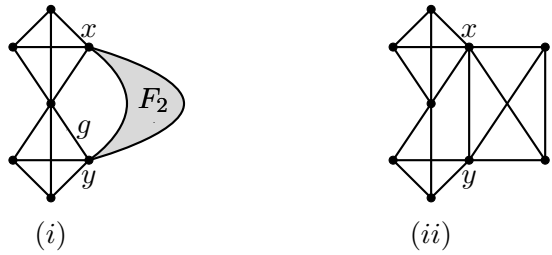
\begin{figure}
\begin{tikzpicture}[scale=0.5]
\draw[draw=gray!30!white,fill=gray!30!white] 
    plot[smooth, tension=1] coordinates{(0,0) (2.5,1.5) (0,3)}  -- 
    plot[smooth, tension=1] coordinates{(0,0) (2.5,1.5) (0,3)};

\draw[draw=white,fill=white] 
    plot[smooth, tension=1] coordinates{(0,0) (1,1.5) (0,3)};

 \draw[white,thick]
(0,0) -- (0,3);

\filldraw (0,3) circle (3pt)node[anchor=south]{$x$};
\filldraw (0,0) circle (3pt)node[anchor=north]{$y$};
\filldraw (-1,4) circle (3pt);
\filldraw (-1,1.5) circle (3pt);
\filldraw (-1,-1) circle (3pt);
\filldraw (-2,3) circle (3pt);
\filldraw (-2,0) circle (3pt);

\filldraw (-.6,.8) circle (0pt)node[anchor=west]{$g$};

 \draw[black,thick]
(-2,0) -- (-1,1.5) -- (0,3) -- (-1,4) -- (-2,3) -- (0,0) -- (-1,-1) -- (-2,0) -- (0,0);

 \draw[black,thick]
(0,3) -- (-2,3);

 \draw[black,thick]
(-1,4) -- (-1,1.5) -- (-1,-1);

\draw[thick] plot[smooth, tension=1] coordinates{(0,0) (1,1.5)
(0,3)}; \draw[thick] plot[smooth, tension=1] coordinates{(0,0)
(2.5,1.5) (0,3)};

  \node [rectangle, draw=white, fill=white] (b) at (-1,-2) {$(i)$};

\filldraw (1.7,1) circle (0pt)node[anchor=south]{$F_2$};

\filldraw (10,3) circle (3pt)node[anchor=south]{$x$};
\filldraw (10,0) circle (3pt)node[anchor=north]{$y$};
\filldraw (9,4) circle (3pt);
\filldraw (9,1.5) circle (3pt);
\filldraw (9,-1) circle (3pt);
\filldraw (8,3) circle (3pt);
\filldraw (8,0) circle (3pt);
\filldraw (12,0) circle (3pt);
\filldraw (12,3) circle (3pt);

 \draw[black,thick]
(8,0) -- (9,1.5) -- (10,3) -- (9,4) -- (8,3) -- (9,1.5);

 \draw[black,thick]
(10,0) -- (9,-1) -- (8,0) -- (10,0);

 \draw[black,thick]
(10,0) -- (12,3) -- (12,0) -- (10,0) -- (10,3) -- (8,3);

 \draw[black,thick]
(9,4) -- (9,1.5) -- (9,-1);

 \draw[black,thick]
(12,0) -- (10,3) -- (12,3);

  \node [rectangle, draw=white, fill=white] (b) at (9,-2) {$(ii)$};

\filldraw (1.7,1) circle (0pt)node[anchor=south]{$F_2$};
\end{tikzpicture}
\caption{The graphs $G$ and $H_3$ in the subcase $G_1=H_2$ of Subcase 3.2.} \label{fig:H_2new}
\end{figure}

Hence we may assume that $G_1\not\in \{K_5^-,H_1,H_2\}$. Then  $G_1$ has two admissible nodes by Theorem \ref{thm:nearly3con}.
It can be seen that any admissible node of $G_1$ which is distinct from $x,y$ is an admissible node of $G$, so we may suppose that $x$ and $y$ are the only admissible nodes of $G_1$. Then both $x$ and $y$ have degree three in $G_1$. Let $a,b,x$ be the neighbours of $y$ in $G_1$. Since $x$ has degree two in $G_1-y$, the admissible 1-reduction at $y$ constructs a new graph $G_1'$ from $G_1$ by deleting $y$ and then adding a new edge from $x$ to either $a$ or $b$, say $b$.
We use the admissible reduction of $G_2$ to $G_2''$ described in the first paragraph of this subcase. Let $G'$ be the graph obtained from $G$ by performing an edge-reduction which deletes the edge $ay$ and then contracts the edge $f\in E(F_2)$. Then
$G'=G_1'*_1 H_2 *_1 G_2''$ and hence $G'$ is an $\M_{2,2}^*$-circuit,  see Figure \ref{fig:final}. Thus the edge-reduction which transforms $G$ to $G'$ is admissible.
\end{proof}

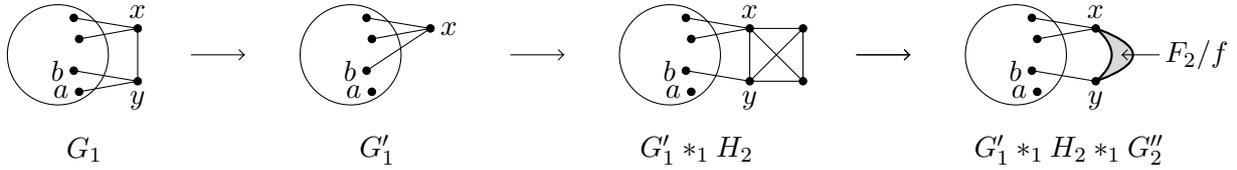
\begin{figure}
\begin{tikzpicture}[scale=0.7]

\draw (-4,10) circle (27pt);
\draw (1.5,10) circle (27pt);
\draw (7.5,10) circle (27pt);
\draw (14,10) circle (27pt);




\filldraw (-2.5,10.5) circle (2pt) node[anchor=south]{$x$};
\filldraw (-2.5,9.5) circle (2pt) node[anchor=north]{$y$};
\filldraw (-3.7,9.7) circle (2pt) node[anchor=east]{$b$};
\filldraw (-3.6,9.3) circle (2pt) node[anchor=east]{$a$};
\filldraw (-3.6,10.3) circle (2pt);
\filldraw (-3.7,10.7) circle (2pt);

\draw[black]
(-3.7,10.7) -- (-2.5,10.5) -- (-2.5,9.5);

\draw[black]
(-3.6,10.3) -- (-2.5,10.5);

\draw[black]
(-3.7,9.7) -- (-2.5,9.5) -- (-3.6,9.3);

\draw[black]
(-1.5,10) -- (-.5,10) -- (-.6,9.9);

\draw[black]
(-.5,10) -- (-.6,10.1);

  \node [rectangle, draw=white, fill=white] (b) at (-3.5,8.2) {$G_1$};


\filldraw (3,10.5) circle (2pt) node[anchor=west]{$x$};
\filldraw (1.8,9.7) circle (2pt) node[anchor=east]{$b$};
\filldraw (1.9,9.3) circle (2pt) node[anchor=east]{$a$};
\filldraw (1.9,10.3) circle (2pt);
\filldraw (1.8,10.7) circle (2pt);

\draw[black]
(1.8,10.7) -- (3,10.5);

\draw[black]
(3,10.5) -- (1.8,9.7);

\draw[black]
(1.9,10.3) -- (3,10.5);

\draw[black]
(4.5,10) -- (5.5,10) -- (5.4,9.9);

\draw[black]
(5.5,10) -- (5.4,10.1);

  \node [rectangle, draw=white, fill=white] (b) at (2,8.2) {$G_1'$};


\filldraw (9,10.5) circle (2pt) node[anchor=south]{$x$};
\filldraw (9,9.5) circle (2pt) node[anchor=north]{$y$};
\filldraw (10,10.5) circle (2pt);
\filldraw (10,9.5) circle (2pt);
\filldraw (7.8,9.7) circle (2pt) node[anchor=east]{$b$};
\filldraw (7.9,9.3) circle (2pt) node[anchor=east]{$a$};
\filldraw (7.9,10.3) circle (2pt);
\filldraw (7.8,10.7) circle (2pt);

\draw[black]
(7.8,10.7) -- (9,10.5) -- (10,10.5) -- (10,9.5) -- (9,9.5) -- (10,10.5);

\draw[black]
(10,9.5) -- (9,10.5) --  (9,9.5) -- (7.8,9.7);

\draw[black]
(7.9,10.3) -- (9,10.5);

\draw[black]
(11,10) -- (12,10) -- (11.9,9.9);

\draw[black]
(12,10) -- (11.9,10.1);

  \node [rectangle, draw=white, fill=white] (b) at (8,8.2) {$G_1'*_1 H_2$};


\draw[draw=gray!30!white,fill=gray!30!white] 
    plot[smooth, tension=1] coordinates{(15.5,10.5)
(16.2,10) (15.5,9.5)}  -- 
    plot[smooth, tension=1] coordinates{(15.5,10.5)
(16.2,10) (15.5,9.5)};

\draw[draw=white,fill=white] 
    plot[smooth, tension=1] coordinates{(15.5,10.5) (15.8,10)
(15.5,9.5)};

 \draw[white,thick]
(15.5,10.5) -- (15.5,9.5);

\filldraw (15.5,10.5) circle (2pt) node[anchor=south]{$x$};
\filldraw (15.5,9.5) circle (2pt) node[anchor=north]{$y$};
\filldraw (14.3,9.7) circle (2pt) node[anchor=east]{$b$};
\filldraw (14.4,9.3) circle (2pt) node[anchor=east]{$a$};
\filldraw (14.4,10.3) circle (2pt);
\filldraw (14.3,10.7) circle (2pt);

\draw[black]
(14.3,10.7) -- (15.5,10.5);

\draw[black]
(15.5,9.5) -- (14.3,9.7);

\draw[black]
(14.4,10.3) -- (15.5,10.5);

\draw[black]
(11,10) -- (12,10) -- (11.9,9.9);

\draw[black]
(12,10) -- (11.9,10.1);

  \node [rectangle, draw=white, fill=white] (b) at (15,8.2) {$G_1'*_1 H_2*_1G_2''$};

\draw[thick] plot[smooth, tension=1] coordinates{(15.5,10.5) (15.8,10)
(15.5,9.5)}; \draw[thick] plot[smooth, tension=1] coordinates{(15.5,10.5)
(16.2,10) (15.5,9.5)};

\node [rectangle, draw=white, fill=white] (b) at (17.4,10) {$F_2/f$};

\draw[black]
(16.7,10) -- (16,10) -- (16.1,10.1);

\draw[black]
(16,10) -- (16.1,9.9);

\end{tikzpicture}
\caption{The subcase of Subcase 3.2 when $G_1\not\in \{K_5^-,H_1,H_2\}$.} \label{fig:final}
\end{figure}

To see that the $K_4^-$-extension operation is needed in Theorem \ref{strongadmiss}, observe that we can construct graphs which do not admit admissible edge-reductions as follows. Take any $\M_{2,2}^*$-circuit $H$, and apply the $K_4^-$-extension to every single edge of $G$. The resulting graph $G$ has two types of edges. Those edges with no end-vertices in $H$ are contained in two triangles so any edge-reduction which contracts such an edge results in a non-simple graph. The remaining edges, those with exactly one end-vertex in $H$, are not admissible either since any edge-reduction which contracts such an edge results in a graph containing a vertex of degree two.

\section{Rigidity and Stress Matrices}\label{sec:rig}

Let  $G=(V,E)$ be a graph with $V=\{v_1,\dots,v_n\}$. We will
consider realisations of $G$ on the unit cylinder
$\mathcal{Y}=\{(x,y,z)\in \mathbb{R}^3:x^2+y^2=1\}$.\footnote{For
the purposes of (global) rigidity there is no loss in generality in
assuming our cylinder has unit radius and is centred on the
$z$-axis.} A \emph{framework} $(G,p)$ on $\mathcal{Y}$ is an ordered
pair consisting of a graph $G$ and a realisation $p$ such that
$p(v_i)\in \mathcal{Y}$ for all $v_i\in V$.

Two frameworks $(G,p)$ and $(G,q)$ on $\mathcal{Y}$ are \emph{equivalent} if $\|p(v_i)-p(v_j)\|=\|q(v_i)-q(v_j)\|$ for all edges $v_iv_j\in E$. Moreover $(G,p)$ and $(G,q)$ are \emph{congruent} if $\|p(v_i)-p(v_j)\|=\|q(v_i)-q(v_j)\|$ for all pairs of vertices $v_i,v_j\in V$.
The framework $(G,p)$ is \emph{globally rigid} on $\mathcal{Y}$ if every equivalent framework $(G,q)$ on $\mathcal{Y}$ is congruent to $(G,p)$.
It is \emph{rigid} on $\mathcal{Y}$ if there exists an $\epsilon>0$ such that every framework $(G,q)$ on $\mathcal{Y}$ which is equivalent to $(G,p)$, and has $\| p(v_i)-q(v_i)\|<\epsilon$ for all $1\leq i \leq n$, is congruent to $(G,p)$.
It is \emph{generic} on $\mathcal{Y}$ if $\td[\mathbb{Q}(p):\mathbb{Q}]=2n$.


The \emph{rigidity matrix} $R_{\CYL}(G,p)$ of a framework $(G,p)$ on $\mathcal{Y}$ is the $(|E|+|V|)\times 3|V|$ matrix
\[ R_{\CYL}(G,p)=\begin{pmatrix}R(G,p)\\ S(G,p) \end{pmatrix}\]
where:
$R(G,p)$ has rows indexed by $E$ and 3-tuples of columns indexed by $V$ in which, for $e=v_iv_j\in E$, the submatrices in row $e$ and columns $v_i$ and $v_j$ are $p(v_i)-p(v_j)$ and $p(v_j)-p(v_i)$, respectively, and all other entries are zero; $S(G,p)$ has rows indexed by $V$ and 3-tuples of columns indexed by $V$ in which, for $v_i\in V$, the submatrix in row $v_i$ and column $v_i$ is $\bar p(v_i)=(x_i,y_i,0)$ when $p(v_i)=(x_i,y_i,z_i)$. We refer to the vectors in the kernel of $R_{\CYL}(G,p)$ as \emph{infinitesimal flexes} of $(G,p)$. We will consider an {infinitesimal flex} as a map $s:V\rightarrow \mathbb{R}^3$ such that $s(v_i)$ is tangential to $\mathcal{Y}$ at $p(v_i)$ for all $v_i\in V$ and $(p(v_j)-p(v_i))\cdot (s(v_j)-s(v_i))=0$ for all $v_jv_i\in E$. The kernel of $R_{\CYL}(G,p)$ will always contain two linearly independent flexes corresponding to a translation along the axis of the cylinder and a rotation about the   
same axis. This implies that  $\rank R_{\CYL}(G,p)\leq 3n-2$. We say that 
$(G,p)$ is \emph{infinitesimally rigid} on $\mathcal{Y}$ if  $\rank R_{\CYL}(G,p)= 3n-2$.
It was shown in \cite{NOP} that a generic framework $(G,p)$ on $\Y$ is rigid if and only if it is a complete graph on at most three vertices or is infinitesimally rigid.

An \emph{equilibrium stress} for a framework $(G,p)$ on $\Y$ is a pair $(\omega,\lambda)$, where $\omega:E\to \bR$ and $\lambda:V\to \bR$ and $(\omega,\lambda)$ belongs to the cokernel of $R_{\CYL}(G,p)$. Thus
$(\omega,\lambda)$ is an equilibrium stress for $(G,p)$ on $\Y$ if and only if
 \begin{equation}\label{eq:stressdefn}
 \sum_{j=1}^n \omega_{ij}(p(v_i)-p(v_j)) + \lambda_i \bar p(v_i)=0 \mbox{ for all $1\leq i \leq n$},
\end{equation}
where $\omega_{ij}$ is taken to be equal to $\omega_e$ if $e=v_iv_j\in E$ and to be equal to $0$ if $v_iv_j\not\in E$.

Given a stress $(\omega,\lambda)$ for a framework $(G,p)$ on $\Y$ we define $\Omega=\Omega(\omega)$ to be the $n\times n$ symmetric matrix with off-diagonal entries $-\omega_{ij}$ and diagonal entries $\sum_j \omega_{ij}$, and
$\Lambda=\Lambda(\lambda)$ to be the $n\times n$ diagonal matrix with diagonal entries $\lambda_1,\lambda_2,\ldots,\lambda_n$.
The {\em stress matrix} associated to $(\omega,\lambda)$ is the $3n\times 3n$ symmetric matrix

$$\Omega_{\CYL}(\omega,\lambda)=\begin{bmatrix} \Omega + \Lambda & 0 & 0\\ 0 & \Omega + \Lambda & 0\\ 0 & 0 & \Omega \end{bmatrix}.$$

It follows from (\ref{eq:stressdefn}) that $(x_1,x_2,\dots,x_n),
(y_1,y_2,\dots,y_n)$ belong to the  cokernel of $\Omega+\Lambda$ and
$(z_1,z_2,\dots,z_n), (1,1,\dots,1)$ belong to the cokernel of
$\Omega$. Hence $\rank (\Omega+\Lambda)\leq n-2$, $\rank \Omega \leq
n-2$ and $\rank \Omega_{\CYL}(\omega,\lambda)\leq 3n-6$. We will say
that $(\omega,\lambda)$ {\em has maximum rank} when $\rank \Omega_{\CYL}(\omega,\lambda)= 3n-6$.

\section{Maximum rank equilibrium stresses}\label{sec:vsplit}

We will show that every generic realisation of an $\M_{2,2}^*$-circuit on the cylinder has a maximum rank equilibrium stress. We do this by showing that both of the recursive operations used in Theorem \ref{thm:strongrecurse} preserve this property.

We will need the following elementary tool from linear algebra. Suppose $M=\begin{bmatrix} A & B\\ C& D\end{bmatrix}$ is a block matrix and $A$ is invertible. Then the \emph{Schur complement} of $A$ in $M$ is the matrix $F=D-CA^{-1}B$ and we have 
\begin{equation} \rank M = \rank A + \rank F.\label{eqn:schur}\end{equation}

We will also need the following extension of \cite[Lemmas 9 and 10]{J&N}.

\begin{lem}\label{lem:generic}
Suppose $(G,p)$ is an infinitesimally rigid framework on the
cylinder $\Y$. Then $(G,q)$ is infinitesimally rigid on
$\Y$ for all generic $(G,q)$ on $\Y$. Moreover, if
$(\omega,\lambda)$ is an equilibrium stress for $(G,p)$ on $\Y$ with
$\rank \Omega_\CYL(\omega,\lambda)=3n-6$, then $(G,q)$ has an
equilibrium stress $(\omega',\lambda')$ on $\Y$ with $\rank
\Omega_{\CYL}(\omega',\lambda')=3n-6$ for all generic $(G,q)$ on $\Y$.
\end{lem}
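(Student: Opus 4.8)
The statement has two parts: a genericity-transfer for infinitesimal rigidity, and a genericity-transfer for the existence of a maximum-rank equilibrium stress. Both are of the standard ``generic point avoids a proper algebraic subset'' type, so the plan is to phrase each property as the non-vanishing of a suitable polynomial (or minor) in the coordinates of $p$, observe that this polynomial is not identically zero because it is non-zero at the given $(G,p)$, and conclude that it is non-zero at every generic $(G,q)$.

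For the first part, I would note that $(G,p)$ is infinitesimally rigid on $\Y$ exactly when $\rank R_{\CYL}(G,p)=3n-3$ (the cylinder admits a $3$-dimensional space of infinitesimal isometries: the $z$-translation and the rotation about the $z$-axis, plus the $z$-translation being two of them — more carefully, the space of infinitesimal isometries of $\R^3$ that are tangential to $\Y$ everywhere is $2$-dimensional? — here I would just cite the established fact from \cite{NOP} that infinitesimal rigidity on $\Y$ is equivalent to $\rank R_{\CYL}(G,p)$ attaining its maximum possible value, which is $3n-3$ when $n\ge 2$). The key point is that $\rank R_{\CYL}(G,q)\ge 3n-3$ is an open condition: it asserts that some $(3n-3)\times(3n-3)$ minor of $R_{\CYL}(G,q)$ is non-zero, and each such minor is a polynomial in the entries of $q$, hence in the coordinates $q(v_1),\dots,q(v_n)$. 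The points of $\Y$ are themselves a (constructible/real-algebraic) subset, so one must be slightly careful: a ``generic point of $\Y$'' means $\td_\Q(\Q(q):\Q)=2n$, equivalently the $2n$ coordinates describing the positions of the $n$ vertices on the cylinder are algebraically independent. Since the polynomial minor is not identically zero on $\Y^n$ (it is non-zero at $p$), and since a generic point of the irreducible variety $\Y^n$ cannot be a zero of a polynomial that does not vanish identically on $\Y^n$, every generic $(G,q)$ has $\rank R_{\CYL}(G,q)\ge 3n-3$, hence equality, hence is infinitesimally rigid. This is essentially the argument of \cite[Lemmas 9, 10]{J&N}, and I would present it as such, only indicating the modification needed to carry it through over all of $\Y^n$ rather than a generic fibre.

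For the second part I would argue similarly but for a $(3n-6)\times(3n-6)$ minor of the stress matrix, treating the stress $(\omega,\lambda)$ itself as part of the data. The subtlety is that the maximum-rank stress at $(G,q)$ need not be obtained by naively ``transporting'' the stress from $(G,p)$; rather one should work in the variety of triples $(q,\omega,\lambda)$ with $q\in\Y^n$ generic-compatible and $(\omega,\lambda)\in\coker R_{\CYL}(G,q)$. Here is the clean way to do it. Since $(G,p)$ is infinitesimally rigid with $\rank R_{\CYL}(G,p)=3n-3$, the cokernel has dimension $|E|+n-(3n-3)=|E|-2n+3$, and this dimension is the same for every generic $q$ by part one (the rank of $R_{\CYL}$ is then maximal and hence constant on a Zariski-open neighbourhood, in particular at every generic point). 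Pick a fixed ordered basis of $\coker R_{\CYL}$ obtained, via Cramer's rule, from a fixed non-vanishing maximal minor of $R_{\CYL}$; the basis vectors then depend rationally (polynomially, after clearing denominators) on the coordinates of $q$. A general element of $\coker R_{\CYL}(G,q)$ is a linear combination $(\omega,\lambda)=\sum_k t_k (\omega^{(k)}(q),\lambda^{(k)}(q))$, and $\rank \Omega_{\CYL}(\omega,\lambda)$ being $\ge 3n-6$ is the non-vanishing of some $(3n-6)$-minor of $\Omega_{\CYL}$, which is now a polynomial $\Phi$ in the $t_k$ and the coordinates of $q$. By hypothesis there exist values $t_k^0$ (expressing the given $(\omega,\lambda)$ in the basis at $q=p$) with $\Phi(t^0,p)\neq 0$, so $\Phi$ is not the zero polynomial. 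Therefore for generic $q$ the polynomial $\Phi(\,\cdot\,,q)$ in the $t_k$ is not identically zero; choosing generic $t_k$ (which, after rescaling to clear the algebraic dependence, may be taken in $\Q$ or at least algebraically independent from $q$ — and one should check this does not disturb genericity of $q$, which is automatic since the $t_k$ range over an affine space disjoint from the coordinates) yields $(\omega',\lambda')\in\coker R_{\CYL}(G,q)$ with $\rank\Omega_{\CYL}(\omega',\lambda')\ge 3n-6$, hence $=3n-6$ by the bound established in Section~\ref{sec:rig}.

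The main obstacle, and the only place where genuine care is needed, is the bookkeeping of the two-layered genericity: $q$ must be generic on $\Y$ while $(\omega,\lambda)$ is allowed to be non-generic (it lies in a proper subspace $\coker R_{\CYL}(G,q)$ whose definition depends on $q$). The way around it is exactly the parametrisation above: fix a maximal non-vanishing minor of $R_{\CYL}$ once and for all (legitimate by part one), use it to write down an explicit rational basis of the cokernel, and then treat the coefficients $t_k$ as extra free parameters living in an auxiliary affine space, so that ``$\Phi\not\equiv 0$'' at the base point $(t^0,p)$ propagates to generic $q$ by the standard fact that a generic point of an irreducible variety (here $\Y^n$, or $\Y^n\times\mathbb{A}^{|E|-2n+3}$) is not a zero of a non-zero polynomial. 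I would present the whole argument as a routine extension of \cite[Lemmas 9 and 10]{J&N}, spelling out only the cokernel-parametrisation step since that is what the word ``extending'' in the lemma statement refers to.
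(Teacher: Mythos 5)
Your proposal is correct and follows essentially the same route as the paper's proof, which likewise parametrises the cokernel of $R_{\CYL}$ by rational functions of the configuration together with a vector $t$ of free parameters and then invokes genericity of $(q,t)$ to propagate the non-vanishing of the relevant minors. The only slip is that the maximal rank of $R_{\CYL}(G,q)$ is $3n-2$ rather than $3n-3$ (the cylinder admits only a $2$-dimensional space of infinitesimal isometries), so the cokernel has dimension $|E|-2n+2$; this changes the parameter count but not the structure or validity of the argument.
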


\begin{proof}
The first assertion follows from the fact that the $\rank R_{\CYL}(G,p)$ will be maximised at any generic framework $(G,p)$.
We adapt the proof technique of Connelly and Whiteley \cite[Theorem 5]{CW} to prove the second assertion.
Choose an arbitrary infinitesimally rigid framework $(G,p')$ on $\Y$.
Since the entries in $R_{\CYL}(G,p')$ are polynomials in $p'$ and the space of equilibrium stresses of $(G,p')$ is the cokernel of $R_{\CYL}(G,p')$, each equilibrium stress of $(G,p')$ can be expressed as a pair of rational functions $(\omega(p',t),\lambda(p',t))$ of $p'$ and $t$, where $t$ is a vector of $m-2n+2$ indeterminates. (To see this we can imagine solving $R_{\CYL}(G,p)^Ts=0$ by Gaussian elimination. The resulting matrix will have $\rank R_{\CYL}(G,p)=3n-2$ leading entries. We can express a general solution by considering the stresses corresponding to the  $(m+n)-(3n-2)=m-2n+2$ columns which do not contain leading entries as indeterminates and then solving for the stresses corresponding to the remaining columns.)  
This implies that the entries in the corresponding stress matrix $\Omega_{\CYL}(\omega(p',t),\lambda(p',t))$ will also be rational functions of $p'$ and $t$. Hence the rank of
$\Omega_{\CYL}(\omega(p',t),\lambda(p',t))$ will be maximised whenever $(G,p')$ is generic on $\Y$ and $t$ is algebraically independent over $\bQ(p')$. Hence, for any generic  $(G,q)$ on $\Y$, $(G,q)$ is infinitesimally rigid on $\Y$  and we can choose $t\in \bR^{m-2n+2}$ such that
 $\rank \Omega_{\Y}(\omega(q,t),\lambda(q,t))=3n-6$.
\end{proof}

We first show that the $K_4^-$-extension operation preserves the property of having a maximum rank equilibrium stress.

\begin{thm}\label{lem:typea1}
Suppose $G=(V,E)$ and $G_1=(V_1,E_1)$ are graphs, $v_2v_3\in E_1$, $G_1- v_2v_3$ is rigid and $G$ is a $K_4^-$-extension
of $G_1$ on the edge $v_2v_3$. Let $(G,p)$ be a generic realisation of $G$ on the unit
cylinder $\Y$ and let $p_1$ be the restriction of $p$ to $G_1$.
Suppose $(G_1,p_1)$ has a maximum rank equilibrium stress on $\Y$.
Then $(G,p)$ has a maximum rank equilibrium stress on $\Y$.
\end{thm}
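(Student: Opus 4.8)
The plan is to use Lemma~\ref{lem:generic} to reduce the statement to the construction of a single infinitesimally rigid realisation of $G$ carrying a maximum rank stress, and then to produce such a realisation by placing the two new vertices so that the inserted $K_4^-$ reproduces, at the level of the stress matrix, the effect of the deleted edge $v_2v_3$. Write $V_1=V(G_1)$, $n_1=|V_1|$ and $n=n_1+2$, and let $u_1,u_2$ be the two vertices of $V\setminus V_1$, so that $\{v_2,v_3,u_1,u_2\}$ spans a $K_4$ minus $v_2v_3$ in $G$ and $E(G)=(E_1\setminus\{v_2v_3\})\cup\{v_2u_1,v_2u_2,v_3u_1,v_3u_2,u_1u_2\}$. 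First I would pick a realisation $p^*$ of $G$ on $\Y$ with $p^*|_{V_1}$ generic: then $(G_1-v_2v_3,p^*|_{V_1})$ is infinitesimally rigid, and by the hypothesis on $G_1$ together with Lemma~\ref{lem:generic} the framework $(G_1,p^*|_{V_1})$ has a maximum rank stress; since $G_1-v_2v_3$ is already rigid, $v_2v_3$ is not a coloop of the rigidity matroid of $G_1$, so we may scale this stress $(\omega^1,\lambda^1)$ to have $\omega^1_{v_2v_3}=1$. The heart of the construction is to choose $p^*(u_1),p^*(u_2)$ generically subject to the single constraint that $p^*(v_2),p^*(v_3),p^*(u_1),p^*(u_2)$ are coplanar. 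Coplanarity makes the three vectors $p^*(u_i)-p^*(v_2)$, $p^*(u_i)-p^*(v_3)$, $p^*(u_i)-p^*(u_{3-i})$ linearly dependent for $i=1,2$, so the $K_4^-$ carries a one-dimensional space of nonzero weightings $\mu$ of its five edges with $\sum_{w}\mu_{u_iw}\bigl(p^*(u_i)-p^*(w)\bigr)=0$ for $i=1,2$.

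For such a $\mu$, set $L_v:=\sum_w\mu_{vw}\bigl(p^*(v)-p^*(w)\bigr)$ for $v=v_2,v_3$. Telescoping the edge terms gives $L_{v_2}+L_{v_3}=0$ and $p^*(v_2)\times L_{v_2}+p^*(v_3)\times L_{v_3}=0$, so $L_{v_2}$ is a scalar multiple of $p^*(v_2)-p^*(v_3)$, and after rescaling $\mu$ we may take $L_{v_2}=p^*(v_2)-p^*(v_3)$ and $L_{v_3}=p^*(v_3)-p^*(v_2)$. Now define $(\omega,\lambda)$ on $(G,p^*)$ by letting $\omega$ agree with $\omega^1$ on $E_1\setminus\{v_2v_3\}$ and with $\mu$ on the five gadget edges, and $\lambda$ agree with $\lambda^1$ on $V_1$ and be $0$ on $\{u_1,u_2\}$. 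Equation~(\ref{eq:stressdefn}) holds at $u_1,u_2$ by the choice of $\mu$; at a vertex $v\in V_1\setminus\{v_2,v_3\}$ it is identical to the equilibrium condition of $(G_1,p^*|_{V_1})$; and at $v_2$ it reads $\bigl[\sum_{w\sim_{G_1}v_2}\omega^1_{v_2w}(p^*(v_2)-p^*(w))+\lambda^1_{v_2}\bar p^*(v_2)\bigr]-\omega^1_{v_2v_3}(p^*(v_2)-p^*(v_3))+L_{v_2}=0$, in which the bracket vanishes by equilibrium for $(G_1,p^*|_{V_1})$ and the remaining two terms cancel since $\omega^1_{v_2v_3}=1$ and $L_{v_2}=p^*(v_2)-p^*(v_3)$; similarly at $v_3$. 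So $(\omega,\lambda)$ is an equilibrium stress of $(G,p^*)$, and one checks that for generic coplanar positions $(G,p^*)$ is infinitesimally rigid.

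It remains to show $\rank\Omega_\CYL(\omega,\lambda)=3n-6$. Order the rows and columns of the $n\times n$ matrices $\Omega(\omega)$ and $\Omega(\omega)+\Lambda(\lambda)$ so that $u_1,u_2$ come first. Because $\lambda_{u_1}=\lambda_{u_2}=0$, both matrices have the same leading $2\times 2$ block $D=\Omega(\omega)|_{\{u_1,u_2\}}$ (invertible for generic coplanar positions) and the same off-diagonal block, say $-W$, while their $V_1\times V_1$ blocks are $\Omega(\omega^1)+\Delta$ and $\Omega(\omega^1)+\Lambda(\lambda^1)+\Delta$ for a symmetric matrix $\Delta$ supported on $\{v_2,v_3\}$. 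By~(\ref{eqn:schur}), $\rank\Omega(\omega)=2+\rank\bigl(\Omega(\omega^1)+\Delta-WD^{-1}W^{\top}\bigr)$ and $\rank\bigl(\Omega(\omega)+\Lambda(\lambda)\bigr)=2+\rank\bigl(\Omega(\omega^1)+\Lambda(\lambda^1)+\Delta-WD^{-1}W^{\top}\bigr)$. The matrix $\Delta-WD^{-1}W^{\top}$ is symmetric and supported on $\{v_2,v_3\}$, and it annihilates both $(1,\dots,1)_{V_1}$ and $(z_v)_{v\in V_1}$: the Schur complement of $D$ in $\Omega(\omega)$ annihilates the $V_1$-restrictions of the kernel vectors $(1,\dots,1)$ and $(z_1,\dots,z_n)$ of $\Omega(\omega)$, and $\Omega(\omega^1)$ annihilates $(1,\dots,1)_{V_1}$ and $(z_v)_{v\in V_1}$ as well. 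Since $z_{v_2}\neq z_{v_3}$, a symmetric matrix on $\{v_2,v_3\}$ killing these two independent vectors is zero, so $\Delta=WD^{-1}W^{\top}$; hence the two Schur complements are exactly $\Omega(\omega^1)$ and $\Omega(\omega^1)+\Lambda(\lambda^1)$, each of rank $n_1-2$ because $(\omega^1,\lambda^1)$ has maximum rank. Therefore $\rank\Omega_\CYL(\omega,\lambda)=2\bigl(2+(n_1-2)\bigr)+\bigl(2+(n_1-2)\bigr)=3n_1=3n-6$, so $(\omega,\lambda)$ has maximum rank, and Lemma~\ref{lem:generic} gives a maximum rank stress for every generic $(G,q)$, in particular for $(G,p)$.

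The step I expect to need the most care is the coplanar placement of $u_1,u_2$: at a generic configuration the $K_4^-$ is flexible and carries no weighting with the balance properties used above, so one is forced onto the proper subvariety of coplanar configurations, and one must verify that within it there are choices of $p^*(u_1),p^*(u_2)$ for which $(G,p^*)$ is infinitesimally rigid on $\Y$ and for which $D$ is invertible, so that Lemma~\ref{lem:generic} applies and the Schur complement computation is legitimate; the rest is routine bookkeeping.
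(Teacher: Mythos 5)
There is a genuine gap, and it is exactly the trap the paper flags in the remark following its own proof of this theorem: placing the two new vertices coplanar with $p^*(v_2),p^*(v_3)$ forces the block you call $D$ to be \emph{singular}, not invertible. With $\lambda_{u_1}=\lambda_{u_2}=0$, your weighting $\mu$ on the five gadget edges is the restriction to $K_4^-$ of the unique (up to scale) planar self-stress $\nu$ of the coplanar $K_4$ on $\{v_2,v_3,u_1,u_2\}$ (with $\nu_{v_2v_3}=-1$ after your normalisation $L_{v_2}=p^*(v_2)-p^*(v_3)$). The $4\times 4$ stress matrix $\Omega_{K_4}(\nu)$ annihilates the constant vector and all three coordinate vectors of four points that affinely span only a plane, i.e.\ a $3$-dimensional space, so it has rank $1$; and $D$ is precisely its $\{u_1,u_2\}$ principal submatrix, hence a principal $2\times 2$ block of a rank-one symmetric matrix, hence singular. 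So the Schur complement identity (\ref{eqn:schur}) cannot be applied, and the subsequent identification $\Delta=WD^{-1}W^{\top}$ is meaningless.

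Worse, the construction cannot be repaired by a rank argument with a generalised inverse, because the stress you build genuinely fails to have maximum rank: writing $P$ for $\Omega(\omega^1)$ padded by zeros on $\{u_1,u_2\}$ and $Q$ for $\Omega_{K_4}(\nu)$ padded by zeros on $V_1\setminus\{v_2,v_3\}$, one checks entry by entry (the $v_2v_3$ entries cancel because $\nu_{v_2v_3}=-\omega^1_{v_2v_3}$) that $\Omega(\omega)=P+Q$, whence $\rank\Omega(\omega)\le (n_1-2)+1=n-3<n-2$, and likewise $\rank(\Omega(\omega)+\Lambda(\lambda))\le n-3$. So $\rank\Omega_\CYL(\omega,\lambda)\le 3n-9$, and Lemma \ref{lem:generic} gives you nothing from this realisation. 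The paper's proof avoids this by choosing the two new points \emph{off} the plane of $p(v_2),p(v_3)$ at explicit positions for which the four rows of $R_\CYL$ indexed by $\{v_0,v_1,v_2,v_3\}$ together with the six edge rows of the $K_4$ are dependent with \emph{nonzero} $\lambda$-values on the new vertices; the resulting $2\times 2$ block $A$ is then shown to be invertible using the algebraic independence of the coordinates, and the Schur complement of $A$ is computed explicitly to equal $\Omega(\omega')$. Your overall strategy (splice the stresses, then take a Schur complement over the two new vertices) is the right one, but the coplanar placement is the one choice that defeats it.
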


\begin{proof}
Let $V_1=\{v_2,v_3,\dots,v_n\}$ and suppose that $G$ is constructed from $G_1$ by deleting $v_2v_3$, adding two new vertices $v_0,v_1$ and five new edges $v_0v_1,v_0v_2,v_0v_3,v_1v_2,v_1v_3$. We may use the isometries of $\Y$ to move $(G,p_1)$ so that $p_1(v_2)=(0,1,0)$. Let $p_1(v_i)=(x_i,y_i,z_i)$ for $3\leq i \leq n$.
Define $q:V\rightarrow \mathbb{R}^3$ by putting $q(v)=p_1(v)$ for all $v\in V_1$ and choosing $q(v_0)=(0,1,1)$ and $q(v_1)=(-x_3,y_3,z_3)$.
(We choose these values for $q$ so that the rows of $R_\CYL(G+v_2v_3,q)$ labelled by the vertices and edges of the subgraph $H$ of $G+v_2v_3$  induced by $\{v_0,v_1,v_2,v_3\}$, are dependent. This will enable us to construct an equilibrium stress for $(G,q)$ by combining equilibrium stresses for $(G_1,p_1)$ and $(H,q|_H)$ in such a way that the net stress on $v_2v_3$ is zero.)

We first show that $(G,q)$ is infinitesimally rigid on $\Y$. This follows from the facts that
$G_1-v_2v_3$ is rigid and that $(G-v_0v_1,q)$ can be constructed from $(G_1-v_2v_3,p_1)$ by adding $v_0$ and $v_1$ as vertices of degree two at points which do not lie on the lines joining their two neighbours $v_2,v_3$.

Now suppose $(\omega',\lambda')$ is a  maximum rank equilibrium
stress for $(G_1,p_1)$.
Since $G_1-v_2v_3$ is rigid, we may
suppose $(\omega',\lambda')$ has been chosen so that the stress
value on $v_2v_3$ is non-zero and hence we may scale
$(\omega',\lambda')$ so that $\omega'_{23}=-1$. We may combine
$(\omega',\lambda')$ with the unique equilibrium stress for
$(H,q|_H)$ which has stress value one on $v_2v_3$ to obtain the
equilibrium stress $(\omega,\lambda)$ for $(G,q)$ on the unit
cylinder defined by  $\omega_f=\omega'_f$ for all $f\in E_1-v_2v_3$,
$\lambda_i=\lambda'_i$ for all $4\leq i \leq n$, $\omega_{23}=0$,
$\omega_{12}=1$, $\omega_{02}=-2z_3$,
$\omega_{13}=\frac{1}{2y_3(z_3-1)}$,
$\omega_{03}=-\frac{z_3}{z_3-1}=\omega_{01}$,
$\lambda_2=\lambda'_2+2y_3-2$,
$\lambda_3=\lambda'_3+\frac{y_3-1}{y_3(z_3-1)}$,
$\lambda_1=\frac{y_3-1}{y_3(z_3-1)}$, and
$\lambda_0=-\frac{2z_3(y_3-1)}{z_3-1}$.

We have $\Omega(\omega)=\begin{bmatrix} A & B\\ B^T& D\end{bmatrix}$ where
$$A=\begin{bmatrix}  \omega_{03}+\omega_{02}+\omega_{01} & -\omega_{01}  \\  -\omega_{01} & \omega_{13}+\omega_{12}+\omega_{01} \end{bmatrix},\; B= \begin{bmatrix}  -\omega_{02} & -\omega_{03} & 0 & \dots \\ -\omega_{12} & -\omega_{13} & 0 & \dots  \end{bmatrix}, 
$$
and
$$D=\begin{bmatrix} \sum_{j\geq 4} \omega_{2j}+\omega_{02} +\omega_{12} & 0 & -\omega_{24} & \dots \\ 0 & \sum_{j\geq 4} \omega_{3j} +w_{03} +w_{13} & -\omega_{34} & \dots \\ \vdots & \vdots & \vdots \end{bmatrix} \,.$$
We can now substitute the values for $\omega_{01},\omega_{02},\omega_{03},\omega_{12},\omega_{13}$ into $A,B$ to obtain
$$A=\begin{bmatrix} -2z_3-\frac{2z_3}{z_3-1} & \frac{z_3}{z_3-1}\\
\frac{z_3}{z_3-1} & 1+\frac{1}{2y_3(z_3-1)}-\frac{z_3}{z_3-1}
 \end{bmatrix}\,\mbox{ and }\,B= \begin{bmatrix} 2z_3 & \frac{z_3}{z_3-1} & 0 & \dots \\ -1 & \frac{-1}{2y_3(z_3-1)} & 0 & \dots  \end{bmatrix}.$$
Since $\{y_3,z_3\}$ is algebraically independent over $\mathbb{Q}$, $A$ is invertible and a matrix calculation gives
$$B^TA^{-1}B=\begin{bmatrix}-2z_3+2 & -1 & 0 & \dots \\
-1 &  -\frac{2y_3-1}{2y_3(z_3-1)}
 & 0 & \dots\\
0 & 0 & 0 & \dots \\ \vdots & & \vdots \end{bmatrix}. $$
We may now deduce that
$$D-B^TA^{-1}B=\begin{bmatrix} \sum_{j\geq 4} \omega_{2j}-1 & 1 & -\omega_{24} & \dots \\ 1 & \sum_{j\geq 4} \omega_{3j} -1 & -\omega_{34} & \dots \\ \vdots & \vdots & \vdots \end{bmatrix}=\Omega(\omega').$$
Equation (\ref{eqn:schur}) now gives $\rank \Omega(\omega)= \rank A+\rank \Omega(\omega')=|V(G)|-2$.

We may use a similar calculation to deduce that $\rank [\Omega(\omega)+\Lambda(\lambda)]=|V(G)|-2$. Hence $\rank \Omega_{\CYL}(\omega,\lambda)=3|V(G)|-6$.  The result now follows from Lemma \ref{lem:generic}.
\end{proof}

We chose the values of $q(v_0)$ and $q(v_1)$ in the above proof so that the framework  $(H,q|_H)$  would have a nowhere zero equilibrium stress.  We could also have accomplished this by putting $q(v_0)$ and $q(v_1)$ in the same plane as $q(v_2)$ and $q(v_3)$, but such a choice  would have resulted in the matrix $A$ being singular.

We next consider the generalised vertex splitting operation. In \cite[Theorem $5.2$]{JKN} it was proved that the standard vertex splitting operation, with the additional assumption that the new
graph is rigid when we delete the \emph{bridging edge} (i.e. the edge joining the two copies of the split vertex), preserves generic global rigidity on the cylinder. We will need a similar result for
generalised vertex splitting and maximum rank equilibrium stresses. Our proof technique is based on a proof by Connelly \cite{Cnotes} that the standard vertex splitting operation preserves the property
of having a maximum rank equilibrium stress in Euclidean space. We first give a variant of  \cite[Lemma 28]{Cnotes}.

\begin{lem}\label{lem:matrixlimit} Let $W_t$ be an $n\times n$ matrix whose entries are continuous functions of $t$ for all $t\in (0,\delta)$ and $X$ be a non-singular $n\times n$ matrix. Suppose that $X^TW_tX=\begin{bmatrix}A_t&B_t\\C_t&D_t\end{bmatrix}$, that $B_t,C_t,D_t$ tend to finite limits $B,C,D$, respectively, as $t$ approaches $0$, and that $A_t=(a_t)$ is a $1\times 1$ matrix with $\lim_{t\to 0}|a_t|=\infty$. Then $\rank W_t\geq \rank D +1$ for all $t$ sufficiently close to $0$.
\end{lem}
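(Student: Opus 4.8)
The statement is a clean linear-algebra fact about ranks in a limit, and I would prove it by the standard Schur-complement argument applied uniformly across $t$ near $0$. First I would note that since $X$ is non-singular, $\rank W_t = \rank (X^TW_tX)$ for every $t$, so it suffices to compute the rank of the block matrix $\begin{bmatrix}A_t&B_t\\C_t&D_t\end{bmatrix}$. Because $\lim_{t\to 0}|a_t|=\infty$, in particular $a_t\neq 0$ for all $t$ sufficiently close to $0$; fix such a neighbourhood. Then $A_t=(a_t)$ is an invertible $1\times 1$ block, and (\ref{eqn:schur}) gives
\[
\rank (X^TW_tX) = \rank A_t + \rank\bigl(D_t - C_tA_t^{-1}B_t\bigr) = 1 + \rank\bigl(D_t - a_t^{-1}C_tB_t\bigr).
\]

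Next I would control the Schur complement $F_t := D_t - a_t^{-1}C_tB_t$ as $t\to 0$. Since $B_t\to B$ and $C_t\to C$ are finite limits, $C_tB_t\to CB$ is a finite matrix, while $a_t^{-1}\to 0$; hence $a_t^{-1}C_tB_t\to 0$ and therefore $F_t\to D$. Now the key point is lower semicontinuity of rank: rank can only drop in a limit, so for $t$ close to $0$ we have $\rank F_t \geq \rank D$. For the reverse inequality I would argue that $\rank F_t \le \rank D$ as well, which needs a little more than bare semicontinuity — one clean way is to observe that $D_t$ itself tends to $D$, so $D_t$ has rank at least $\rank D$ near $0$; combined with the upper bound I still need to rule out $\rank F_t > \rank D$. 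The cleanest route here is to write $F_t = D + E_t$ with $E_t\to 0$, and note that this alone does not force $\rank F_t \le \rank D$ in general; so instead I would appeal directly to the hypotheses of the intended application, where $W_t$ is a stress matrix whose rank is bounded above by $3n-6$ — but since this lemma is stated abstractly, the honest statement is that rank is lower semicontinuous, giving $\rank W_t \ge \rank D + 1$ for $t$ near $0$, and I would then check whether the paper actually needs equality or only this inequality.

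Re-reading the statement, it does claim equality, so I would supply the missing upper bound as follows. Choose a fixed invertible submatrix of $D$ of size $r=\rank D$; call the index sets $I,J$ with $|I|=|J|=r$ and $\det D[I,J]\neq 0$. Then $\det D_t[I,J]\to\det D[I,J]\neq 0$, so $D_t[I,J]$ is invertible for $t$ near $0$, i.e. $\rank D_t\ge r$; but also every $(r{+}1)\times(r{+}1)$ minor of $D$ vanishes, and these minors of $D_t$ need not vanish — so this does not close the gap either. At this point the correct reading is that the lemma as applied in Section~\ref{sec:vsplit} only uses the inequality $\rank W_t\ge\rank D+1$ together with an a~priori upper bound $\rank W_t\le\rank D+1$ coming from the maximum-rank hypothesis on the stress matrix; the equality in the statement should be interpreted in that light. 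The proof I would write therefore establishes: $\rank W_t=1+\rank F_t$ exactly (Schur complement, valid since $a_t\neq 0$), $F_t\to D$, hence $\rank F_t\ge\rank D$ by lower semicontinuity of rank, and consequently $\rank W_t\ge\rank D+1$; when a matching upper bound is available (as it will be in every application), equality follows.

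The main obstacle is precisely this subtlety: $F_t\to D$ only yields $\rank F_t\ge\rank D$, not equality, so the ``$=$'' in the statement is not a pure consequence of the stated hypotheses and must be read together with an external rank ceiling. Everything else — invertibility of $a_t$, the Schur identity (\ref{eqn:schur}), and $a_t^{-1}C_tB_t\to 0$ — is routine.
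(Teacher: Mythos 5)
Your proposal follows exactly the same route as the paper's proof: reduce to $X^TW_tX$ using the non-singularity of $X$, apply the Schur complement identity (\ref{eqn:schur}) to the invertible $1\times 1$ block $A_t$, and observe that $F_t=D_t-a_t^{-1}C_tB_t\to D$ because $a_t^{-1}\to 0$ while $B_t,C_t,D_t$ converge. The difference is in the last step: the paper simply asserts $\rank F_t=\rank D$ for $t$ near $0$, whereas you correctly point out that convergence only gives the lower semicontinuity bound $\rank F_t\ge\rank D$. Your scruple is justified, and the lemma as stated is in fact false: take $n=2$, $X=I$ and $W_t=\operatorname{diag}(1/t,\,t)$; then $B_t=C_t=0$ and $D_t=(t)\to D=(0)$, so the lemma would give $\rank W_t=1$ near $0$, yet $\rank W_t=2$ for all $t>0$ (note $W_t$ is even symmetric, so no added structural hypothesis of that kind rescues the equality). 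What the stated hypotheses actually yield is $\rank W_t\ge\rank D+1$ for all $t$ sufficiently small, exactly as you conclude, and this is all that is used in the proof of Theorem \ref{thm:vsplitmax}: there $D=\Omega(\omega)$ (respectively $\Omega(\omega)+\Lambda(\lambda)$) has rank $|V(G)|-2$, so the inequality gives $\rank\Omega(\tilde\omega^t)\ge|V(G)|-1=|V(\hat G)|-2$, and the reverse inequality is automatic because every equilibrium stress of a framework on $\Y$ satisfies $\rank\Omega\le|V(\hat G)|-2$ (Section \ref{sec:rig}). So your reading --- prove the inequality from the hypotheses and obtain equality from the rank ceiling available in every application --- is the correct repair; the paper's one-line deduction of equality is a genuine, though in context harmless, gap.
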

\begin{proof}
Equation (\ref{eqn:schur}) and the hypothesis that $X$ is non-singular  give
$$\rank W_t=\rank X^TW_tX=\rank A_t+\rank (D_t-a_t^{-1}C_tB_t).$$
Since $\lim_{t\to 0} |a_t|=\infty$ and $\lim_{t\to 0}D_t=D$, we have $\rank A_t=1$
and $\rank (D_t-a_t^{-1}C_tB_t)\geq \rank D$
when $t$ is sufficiently
close to zero.
\end{proof}

We will also need the following result about frameworks with two coincident points. Let $G$ be a graph with two distinguished vertices $u$ and $v$. A framework $(G,p)$ is \emph{$uv$-coincident} if $p(u)=p(v)$. A $uv$-coincident framework is \emph{$uv$-generic} if $(G-u,p')$ is generic, where $p'$ is the restriction of $p$ to $G-u$.

\begin{thm}{\cite[Theorem 18]{JKN}}\label{thm:uvrigid}
Let $u$ and $v$ be distinct vertices of a graph $G$ and let $(G,p)$ be a $uv$-generic, $uv$-coincident realisation of $G$ on the cylinder. Then $(G,p)$ is infinitesimally rigid if and only if the graphs $G-uv$ and $G/uv$ are both rigid on the cylinder.
\end{thm}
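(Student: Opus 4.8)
The plan is to analyse the rigidity matrix $R_{\CYL}(G,p)$ of the $uv$-coincident framework directly and relate its rank to the rigidity matrices of $G-uv$ and $G/uv$ on the cylinder. Since $(G,p)$ is $uv$-coincident we have $p(u)=p(v)=:\bar q$, so the row of $R(G,p)$ indexed by the edge $uv$ (if it is present) is identically zero; this already shows that an edge $uv$ contributes nothing, which is why the statement only involves $G-uv$. First I would set up coordinates using the isometries of $\Y$ so that $p(u)=p(v)$ is a convenient point, say $(0,1,0)$, and write down the block structure of $R_{\CYL}(G,p)$ with the columns of $u$ and $v$ adjacent. The key observation is that any infinitesimal flex $s$ of $(G,p)$ restricted to the vertices other than $u$ is, on the one hand, an infinitesimal flex of $(G-u,p')$, and on the other hand the difference $s(u)-s(v)$ measures the extent to which $s$ fails to descend to a flex of the contracted framework.

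The core of the argument is the following rank identity: after a column operation replacing the $v$-columns by the $v$-columns minus the $u$-columns (equivalently, a change of variables $t(u)=s(u)$, $t(v)=s(v)-s(u)$ on flex space), the matrix $R_{\CYL}(G,p)$ block-decomposes so that one block is $R_{\CYL}(G/uv, \bar p)$ — the rigidity matrix of the contracted framework, with $u$ and $v$ identified to a single vertex at $\bar q$ — and the complementary Schur-type block, governing the variables $t(v)$, is exactly the rigidity matrix $R_{\CYL}$ restricted to the edges incident with $v$, evaluated at $p$; after the contraction is "frozen" this latter block has full column rank on the tangent directions at $v$ precisely when the edges at $v$, together with the cylinder constraint at $v$, pin down the two tangential degrees of freedom of $v$. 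Because the framework is $uv$-generic, $(G-u,p')$ is generic, and one checks that the generic behaviour of the neighbours of $v$ makes this second block behave as expected. Concretely, I would show
\begin{equation}\label{eqn:uvrank}
\rank R_{\CYL}(G,p) = \rank R_{\CYL}(G/uv,\bar p) + \big(\text{number of tangential dof of }v\text{ pinned by }N(v)\big),
\end{equation}
and that the right-hand side equals $3n-6$ if and only if both $G/uv$ is rigid on $\Y$ (so the first term is $3(n-1)-6$) and the edges at $v$ together with the cylinder constraint span the remaining $3$ dimensions, which is an infinitesimal-rigidity condition equivalent, via genericity and the argument that adding $v$ back as a vertex of appropriate degree to a rigid framework on $\Y$ preserves rigidity, to $G-uv$ being rigid on $\Y$.

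The main obstacle I anticipate is establishing the "if" direction cleanly: one must rule out a spurious infinitesimal flex supported near $v$ and $u$ that is neither detected by $G/uv$ nor by $G-uv$. The delicate point is that in the $uv$-coincident configuration the neighbours of $u$ and the neighbours of $v$ lie at the same relative positions to the common point $\bar q$, so the two "local" sub-systems at $u$ and at $v$ are not independent; the column operation above is designed exactly to decouple them, but one must verify that the decoupling does not hide a dependency, i.e. that the off-diagonal block vanishes after the change of variables. This is where $uv$-genericity of $p$ is used essentially: the generic placement of $N(u)\cup N(v)$ guarantees the relevant determinants are non-zero. Once \eqref{eqn:uvrank} is in place, both directions follow by translating between rank of $R_{\CYL}$ and rigidity on $\Y$, using Theorem \ref{thm:cylinderlaman} and the characterisation of rigidity via infinitesimal rigidity from \cite{NOP}. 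I would also handle separately the small/degenerate cases (e.g. $|V|\le 3$, or $u,v$ sharing few neighbours) where the "complete graph on at most three vertices" clause of the rigidity characterisation intervenes.
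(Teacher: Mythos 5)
First, note that the paper does not prove this statement at all: Theorem \ref{thm:uvrigid} is imported verbatim as \cite[Theorem 18]{JKN}, so there is no in-paper proof to compare with; the actual proof lives in that reference and is a substantial combinatorial argument (a characterisation of independence in the $uv$-coincident rigidity matroid on $\Y$ together with a recursive construction), not a linear-algebra identity.

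Measured against that, your proposal has a genuine gap in the ``if'' direction. Your change of variables $t(u)=s(u)$, $t(v)=s(v)-s(u)$ does make $R_{\CYL}(G,p)$ block lower-triangular with respect to the partition of the rows into those involving $v$ and those not, and the rows not involving $v$ indeed have zero in the new $v$-columns. But the top-left block of this decomposition is \emph{not} $R_{\CYL}(G/uv,\bar p)$: it is the contracted rigidity matrix with all rows corresponding to edges at $v$ (and the cylinder row at $v$) deleted; those deleted rows reappear in the bottom strip alongside the local block at $v$. Consequently the decomposition only yields $\rank R_{\CYL}(G,p)\le \rank R_{\CYL}(G/uv,\bar p)+3$ together with a lower bound in which the contracted matrix is missing the edges at $v$ -- it does not yield your claimed rank identity, and in particular ``$G/uv$ rigid plus the local block at $v$ has rank $3$'' does not imply infinitesimal rigidity of $(G,p)$. (The upper bound does give the easy ``only if'' direction, which is fine, as is the observation that the $uv$ row is zero.) The deeper problem is your final appeal to genericity: $p$ is precisely \emph{not} generic -- the coincidence $p(u)=p(v)$ forces the rows for edges $uw$ and $vw$ to agree outside the $u$- and $v$-columns, and deciding when this degeneracy does or does not drop the rank below $3|V|-2$ is the entire content of the theorem. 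Showing that the two conditions ``$G-uv$ rigid'' and ``$G/uv$ rigid'' suffice to exclude a hidden row dependency cannot be done by a Schur complement at $v$; it requires either exhibiting a coincident realisation of maximal rank or, as in \cite{JKN}, an inductive construction of the graphs satisfying the relevant sparsity counts with a proof that each construction move preserves independence of the coincident rigidity matrix. That missing induction is the heart of the proof.
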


We need one more result to show that the generalised vertex split operation preserves the property of
having a maximum rank equilibrium stress when applied to $\M_{2,2}^*$-circuits on $\Y$.

\begin{lem}\label{lem:expand}
Let $(G, p)$ be a generic realisation of an $\M_{2,2}^*$-circuit on $\Y$ and $(\omega, \lambda)$ be a non-zero equilibrium stress
for $(G,p)$. Then $\lambda_i\neq 0$ for all $v_i\in V$.
\end{lem}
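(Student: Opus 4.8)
The plan is to argue by contradiction. Suppose $\lambda_k=0$ for some $v_k\in V$. I will show that the restriction of $(\omega,\lambda)$ to $V\setminus\{v_k\}$ is then a nonzero equilibrium stress of the framework obtained from $(G,p)$ by releasing $v_k$ to move freely in $\bR^3$, so that framework is not infinitesimally rigid; this contradicts the fact, to be established separately, that an $\M_{2,2}^*$-circuit is always rigid on $\Y$ when one designated vertex is released. First, some structural facts: since $E=E(G)$ is a circuit of $\M_{2,2}^*(G)$ we have $|E|=2|V|-1$, $G$ has no isolated vertices, and every proper subset of $E$ is $(2,2)$-sparse, meaning $|F'|\le 2|V(F')|-2$ for all nonempty $F'$. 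Hence for each edge $e$, $G-e$ has $2|V|-2$ edges forming a base of $\M_{2,2}^*(G-e)$, so $G-e$ — and likewise $G$ — is rigid on $\Y$ by Theorem~\ref{thm:cylinderlaman}. Therefore $R_\CYL(G-e,p)$ and $R_\CYL(G,p)$ both have rank $3|V|-2$, so $(G-e,p)$ has only the zero equilibrium stress while the equilibrium stresses of $(G,p)$ form a one-dimensional space. An equilibrium stress of $(G,p)$ vanishing on $e$ would restrict to one of $(G-e,p)$, so $\omega_e\neq 0$ for every $e$; in particular $\omega\neq 0$ (equivalently, if $\omega=0$ then (\ref{eq:stressdefn}) forces $\lambda_i\bar p(v_i)=0$ with $\bar p(v_i)\neq 0$, so $\lambda=0$).

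Now let $R^{*}(G,p)$ be obtained from $R_\CYL(G,p)$ by deleting the row of $S(G,p)$ indexed by $v_k$; this matrix has $3|V|-2$ rows and is the rigidity matrix of $(G,p)$ when $v_k$ is allowed arbitrary infinitesimal displacements and the other vertices must stay tangent to $\Y$. Its cokernel is canonically identified with the set of equilibrium stresses $(\omega',\lambda')$ of $(G,p)$ with $\lambda'_k=0$ (for such a stress the equilibrium condition (\ref{eq:stressdefn}) at $v_k$ carries no normal term). So $(\omega,\lambda|_{V\setminus\{v_k\}})$ is a nonzero element of this cokernel, whence $R^{*}(G,p)$ has rank at most $3|V|-3$ and $(G,p)$ is not infinitesimally rigid with $v_k$ free. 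As $p$ is generic on $\Y$, this contradicts the assertion that $G$ is generically rigid on $\Y$ with $v_k$ released, which I prove next; hence $\lambda_k\neq 0$, and since $v_k$ was arbitrary the lemma follows.

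To complete the argument I would show that a generic realisation on $\Y$ of an $\M_{2,2}^*$-circuit $G$, with any chosen vertex $v$ released to $\bR^3$, is infinitesimally rigid. I would derive this from a combinatorial characterisation of rigidity on the cylinder with one designated free vertex: such a generic framework is infinitesimally rigid if and only if $E(G)$ has rank $2|V|-1$ in the count matroid $\M^{v}_{2,2}$, in which $F\subseteq E(G)$ is independent precisely when $|F'|\le 2|V(F')|-2$ for every nonempty $F'\subseteq F$ with $v\notin V(F')$ and $|F'|\le 2|V(F')|-1$ for every $F'\subseteq F$ with $v\in V(F')$ — the single extra permitted edge reflecting the one extra degree of freedom at $v$. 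Granting this, an $\M_{2,2}^*$-circuit meets the condition for every $v$: each proper subset of $E(G)$ is $(2,2)$-sparse, hence independent in $\M^{v}_{2,2}$, while $|E(G)|=2|V|-1$ and $v$ lies on an edge, so $E(G)$ itself obeys the second bound and is independent in $\M^{v}_{2,2}$, necessarily of full rank $2|V|-1$.

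The main obstacle is this $v$-free characterisation. That $\M^{v}_{2,2}$ is a matroid is routine count-matroid theory, and the necessity half — that the generic rank of the $v$-free rigidity matrix is at most the rank of $\M^{v}_{2,2}$ — is a standard rank/submodularity estimate; the work lies in sufficiency, which I would prove via a Henneberg-type recursion for $\M^{v}_{2,2}$-tight graphs (analogues of $0$- and $1$-extension, plus a move that transfers an edge onto $v$), checking that each operation preserves generic $v$-free rigidity. A further delicate point, which the characterisation must absorb, is genericity: one must verify that a realisation generic on $\Y$ — so that the released vertex also starts on $\Y$ — already attains the maximal rank of the $v$-free rigidity matrix.
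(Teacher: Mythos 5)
Your reduction is exactly the one the paper uses: if $\lambda_k=0$ then $(\omega,\lambda|_{V\setminus\{v_k\}})$ is a non-zero cokernel vector of the matrix $R_{v_k}(G,p)$ obtained by deleting the surface row of $v_k$, so that matrix has rank at most $3|V|-3$ and $(G,p)$ fails to be infinitesimally rigid when $v_k$ is released into $\mathbb{R}^3$; the lemma therefore follows once one knows that every generic realisation of an $\M_{2,2}^*$-circuit is $v$-free infinitesimally rigid for every $v$ (Theorem \ref{thm:vfree}). Your first paragraph, showing $\omega_e\neq 0$ for all $e$ via the one-dimensionality of the cokernel of $R_\CYL(G,p)$ and the triviality of the cokernel of $R_\CYL(G-e,p)$, is correct and slightly more than is needed to see that the restricted vector is non-zero.

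The genuine gap is that the $v$-free rigidity statement -- the entire substance of the lemma -- is not proved but only programmed. You propose to establish it by first proving a full combinatorial characterisation of generic $v$-free rigidity via the count matroid $\M^{v}_{2,2}$ and a Henneberg-type recursion for $\M^{v}_{2,2}$-tight graphs. Two points make this more than a routine omission. First, the proposed recursion (``analogues of $0$- and $1$-extensions, plus a move that transfers an edge onto $v$'') is almost certainly too small a set of moves: already for plain $(2,2)$-tight simple graphs on the cylinder the known recursive constructions require additional operations (vertex splitting, vertex-to-$K_4$, and joins), and the same obstructions (edges lying in two triangles, degree-two vertices after contraction) arise here; verifying that each such operation preserves generic $v$-free rigidity is where all the analytic work lies. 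Second, the paper proceeds in the opposite direction: it proves $v$-free rigidity of $\M_{2,2}^*$-circuits directly (Theorem \ref{thm:vfree}), by induction along the atom decomposition of Theorems \ref{thm:nearly3con} and \ref{strongadmiss}, with preservation lemmas (Lemma \ref{lem:vfree}) and several explicit coordinate computations for the degenerate join subcases, and only afterwards deduces the general characterisation (Theorem \ref{thm:vfreefull}, phrased as ``$G$ is rigid on $\Y$ and $v$ lies in an $\M_{2,2}^*$-circuit'' rather than via a modified count) as a corollary of the circuit case. So the step you defer is not a standard citation but several pages of argument, and the route you sketch for it is, if anything, harder than the one the paper takes. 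Your observation that a realisation generic on $\Y$ must attain the generic rank of $R_v$ is a fair point but is the easy part; the matroid sufficiency is the real obstacle and remains open in your write-up.
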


We will delay the proof of this lemma until Section \ref{sec:vfree} as its proof is fairly long and will be a distraction from our next result.

\begin{thm}\label{thm:vsplitmax}
Suppose that $(G,p)$ is a generic realisation of an
$\M_{2,2}^*$-circuit on $\Y$ and that
$(\omega,\lambda)$ is a maximum rank equilibrium stress for $(G,p)$.
Let $\hat G$ be obtained from $G$ by a generalised vertex splitting
operation and suppose that $\hat G$ is an $\M_{2,2}^*$-circuit on
the cylinder. Then there exists a realisation $(\hat G,q)$  on $\Y$
which is infinitesimally rigid and has a maximum rank 
equilibrium stress.
\end{thm}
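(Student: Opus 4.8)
The plan is to follow Connelly's scheme for vertex splitting in Euclidean space, degenerating the two copies $v_1,v_2$ of the split vertex towards the point $p(v)$ and analysing the resulting one-parameter family of frameworks with the matrix-limit Lemma~\ref{lem:matrixlimit}. Write $n=|V(G)|$, let $v$ be the split vertex with $N(v)=N_1\cup N_2$, and let $v_1,v_2,x$ and the new edges $v_1v_2,v_1x$ be as in the definition of the generalised vertex split (chosen so that $vx\notin E(G)$, as $\hat G$ is simple). After applying an isometry of $\Y$ we may place $p(v)$ conveniently. For $t\in(0,\delta)$ define a framework $(\hat G,q_t)$ on $\Y$ by letting $t\mapsto q_t(v_1)$ be a smooth path in $\Y$ with $q_t(v_1)\to p(v)$ as $t\to0$ and non-zero tangent vector $w$ at $t=0$, and setting $q_t(v_2)=p(v)$ and $q_t(u)=p(u)$ for the remaining vertices. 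Then $(\hat G,q_0)$ is $v_1v_2$-coincident, and it is $v_1v_2$-generic because $q_0$ restricted to $\hat G-v_1$ is a relabelling of the generic configuration $p$.

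First I would establish infinitesimal rigidity for small $t>0$. Since $\hat G$ is an $\M_{2,2}^*$-circuit, $\hat G-v_1v_2$ is independent in $\M_{2,2}^*$ with $2|V(\hat G)|-2$ edges, hence rigid on $\Y$ by Theorem~\ref{thm:cylinderlaman}; also $\hat G/v_1v_2=G+vx\supseteq G$, which is rigid. Theorem~\ref{thm:uvrigid} then shows $(\hat G,q_0)$ is infinitesimally rigid, and as infinitesimal rigidity is an open condition, so is $(\hat G,q_t)$ for all small $t>0$. For such $t$, $R_\CYL(\hat G,q_t)$ has one-dimensional cokernel (because $|E(\hat G)|=2|V(\hat G)|-1$), so up to scaling there is a unique equilibrium stress $(\hat\omega_t,\hat\lambda_t)$, and it is non-zero on every edge of $\hat G$ since $\hat G$ is an $\M_{2,2}^*$-circuit. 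Finally, the edge $v_1v_2$ contributes the zero row to $R_\CYL(\hat G,q_0)$, so $(\hat G-v_1v_2,q_0)$ is also infinitesimally rigid and carries no non-zero equilibrium stress, while $\coker R_\CYL(\hat G,q_0)$ is spanned by the stress supported on $v_1v_2$ alone.

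Next I would normalise $(\hat\omega_t,\hat\lambda_t)$ so that its restriction to $E(G)$ — identifying each edge $v_1w$ or $v_2w$ with the edge $vw$ of $G$ — has unit norm, which is legitimate since that restriction is non-zero, and pass to a subsequential limit as $t\to0$. Taking limits in the equilibrium equations: at vertices other than $v_1,v_2,x$ they become the equilibrium equations of $(G,p)$; the equation at $v_2$ forces $\hat\omega_t(v_1v_2)\bigl(q_t(v_2)-q_t(v_1)\bigr)$ to tend to a finite vector $\mu$, necessarily tangent to $\Y$ at $p(v)$ and parallel to $w$; the normalisation together with the structure of $\coker R_\CYL(\hat G,q_0)$ forces $\hat\omega_t(v_1v_2)\to\infty$; and $\mu\neq0$ because $(\hat G-v_1v_2,q_0)$ carries no non-zero stress. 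The sum of the equations at $v_1$ and $v_2$ becomes, after the identification, the equilibrium equation of $(G,p)$ at $v$ together with an extra term proportional to $p(v)-p(x)$ and to $\lim_{t\to0}\hat\omega_t(v_1x)$. \emph{The main obstacle} is to show this limit is zero — equivalently, that the stress on the new edge $v_1x$ is asymptotically negligible — for then the limit of $(\hat\omega_t,\hat\lambda_t)$ restricts to an equilibrium stress of $(G,p)$, hence to a non-zero scalar multiple of $(\omega,\lambda)$, with $\lambda_v$ splitting as $\hat\lambda^0_{v_1}+\hat\lambda^0_{v_2}$. This is exactly where the nowhere-zero property of the stress is needed: the $\M_{2,2}^*$-circuit property of $G$ gives $\omega\neq0$ on every edge and Lemma~\ref{lem:expand} gives $\lambda\neq0$ at every vertex, and these keep the limit non-degenerate and prevent it from being a strictly lower-rank equilibrium stress of $G+vx$; a suitable choice of the path $q_t(v_1)$ (equivalently of $w$) is made to ensure this.

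Finally I would apply Lemma~\ref{lem:matrixlimit} twice. Let $X$ be the non-singular $(n+1)\times(n+1)$ change of basis on the vertex coordinates that replaces the pair $v_1,v_2$ by a ``sum'' direction (playing the role of the merged vertex $v$) and the ``difference'' direction $v_1-v_2$. For $W_t=\Omega(\hat\omega_t)$, the $(v_1-v_2)$-diagonal entry of $X^TW_tX$ equals $2\hat\omega_t(v_1v_2)$ plus bounded terms and so tends to infinity, while the complementary block tends to a non-zero multiple of $\Omega(\omega)$, of rank $n-2$; Lemma~\ref{lem:matrixlimit} gives $\rank\Omega(\hat\omega_t)=n-1$ for small $t$. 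The same argument applied to $W_t=\Omega(\hat\omega_t)+\Lambda(\hat\lambda_t)$, whose complementary block tends to a non-zero multiple of $\Omega(\omega)+\Lambda(\lambda)$ (again of rank $n-2$, the $\Lambda$-part being bounded), gives $\rank\bigl(\Omega(\hat\omega_t)+\Lambda(\hat\lambda_t)\bigr)=n-1$. Hence
\[\rank\Omega_\CYL(\hat\omega_t,\hat\lambda_t)=2(n-1)+(n-1)=3(n+1)-6=3|V(\hat G)|-6,\]
so $(\hat\omega_t,\hat\lambda_t)$ has maximum rank, and taking $q=q_t$ for a sufficiently small $t$ gives the required infinitesimally rigid realisation with a maximum rank equilibrium stress.
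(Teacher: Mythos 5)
Your route is the same as the paper's: collapse the two copies of the split vertex to a coincident framework, invoke Theorem \ref{thm:uvrigid} for infinitesimal rigidity, separate the copies along a path on $\Y$, conjugate by the sum/difference change of basis $X$, and apply Lemma \ref{lem:matrixlimit} twice. However, the step you yourself label ``the main obstacle'' --- that the stress on the new edge $v_1x$ tends to $0$ --- is precisely the technical core of the proof, and ``a suitable choice of the path $q_t(v_1)$ is made to ensure this'' does not supply it. The choice is in fact forced, not merely convenient. Writing $a=\sum_{u\in N_1}\omega_{vu}(p(v)-p(u))$ and $b=\sum_{u\in N_2}\omega_{vu}(p(v)-p(u))$, the limiting equilibrium equation at the receding copy is $a+\omega^0_{v_1x}(p(v)-p(x))+\mu w+\lambda^0_{v_1}\bar p(v)=0$, where $w$ is the unit tangent of the path at $p(v)$; so $\omega^0_{v_1x}=0$ forces $a\in\span\{w,\bar p(v)\}$, which fails for all $w$ outside a one-dimensional subspace of the tangent plane. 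If $\omega^0_{v_1x}\neq 0$ the limiting stress restricted to $E(G)$ is not an equilibrium stress of $(G,p)$, the block $D$ in Lemma \ref{lem:matrixlimit} is not a multiple of $\Omega(\omega)$, and the rank count collapses. The paper closes this by first proving that $\bar p(v)\in\span\{a,b\}$ (this is where Lemma \ref{lem:expand} is actually used, via $\lambda_v\neq 0$) and that $\dim\span\{a,b\}=2$ (via the linear independence of the rows of $R_{\CYL}(\hat G-v_1v_2,q_0)$, which comes from Theorem \ref{thm:uvrigid}), and then taking the path to be the curve $\Y\cap\bigl(p(v)+\span\{a,b\}\bigr)$. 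With that choice one can write down an explicit cokernel vector of a perturbed rigidity matrix that is zero on $v_1x$ and agrees with $(\omega,\lambda)$ away from the split, and show the true equilibrium stress converges to it; your compactness argument would have to reproduce this construction in order to identify its subsequential limit.

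Two further assertions also need the explicit computations the paper performs: that $\hat\omega_t(v_1v_2)\to\infty$ (the paper derives $\omega^t_{01}=-\,(a\cdot\bar p(v)^\perp)/(c^t\cdot\bar p(v)^\perp)$ with $c^t\to 0$), and that the $\lambda$-value at the receding copy converges to a finite limit --- after conjugating by $X$ this value appears in the off-diagonal blocks of $X^T(\Omega+\Lambda)X$, so its boundedness is needed for the second application of Lemma \ref{lem:matrixlimit} and is not automatic. A minor slip: the operation does not require $vx\notin E(G)$; it only requires $x\notin N_1$, and in the paper's setting $x$ is typically a neighbour of $v$ retained by the stationary copy.
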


\begin{proof}
Suppose that $V(G)=\{v_1,v_2,\dots,v_n\}$ and that $\hat G$ is obtained from $G$ by choosing the vertex $v_1$ with neighbours $v_2,v_3,\ldots,v_m$, deleting the edges
$v_1v_2,v_1v_3,\ldots,v_1v_{k}$ and then adding a new vertex $v_0$
and new edges $v_0v_1,v_0v_2,\ldots,v_0v_{k}$ and $v_0v_\ell$  for some $\ell\geq k+1$.

Let $(\hat G,\hat p)$ be the $v_0v_1$-coincident $v_0v_1$-generic
framework with $\hat p|_G=p$ and $\hat p(v_0)=p(v_1)$. Then  $(\hat
G-v_0v_1,\hat p)$ is infinitesimally rigid by Theorem \ref{thm:uvrigid} (since $\hat G-v_0v_1$ and $(\hat
G-v_0v_1)/v_0v_1=G+v_0v_\ell$ are both rigid on the cylinder). Hence $\rank R_\CYL(\hat G-v_0v_1,\hat p)=3|V(\hat G)|-2$. Since $\hat G$ is an $\M_{2,2}^*$-circuit, this implies that the rows of $\hat R_{\CYL}(\hat G-v_0v_1,\hat p)$ are linearly independent.

Let $a=\sum_{j=2}^k\omega_{1j}(p(v_1)-p(v_j))$ and
$b=\sum_{j=k+1}^m\omega_{1j}(p(v_1)-p(v_j))$. Since
$(\omega,\lambda)$ is an equilibrium stress, we have
$a+b+\lambda_1\bar p(v_1)=0$ and since $\lambda_1\neq 0$ by Lemma
\ref{lem:expand}, $\bar p(v_1)\in \span \{a,b\}$.

We next show that $\dim \span \{a,b\}=2$. Suppose to the contrary that $\dim \span \{a,b\}=1$. Then $a$ and $b$ are both scalar
multiples of $\bar p(v_1)$ so $a+\mu \bar p(v_1)=0=b+\nu \bar p(v_1)$ for some $\mu,\nu\in \bR$. We can now define a non-zero equilibrium
stress $(\hat \omega,\hat \lambda)$ for $(\hat G-v_0v_1,\hat p)$ as
follows:
 $\hat \omega_{0j}=\omega_{1j}$ for $2\leq j \leq k$ and otherwise $\hat \omega_{0j}=0$, 
 $\hat \omega_{1j}=\omega_{1j}$ for $k+1\leq j\leq m$ and otherwise $\hat \omega_{1j}=0$,
 $\hat \omega_{ij}=\omega_{ij}$ for all $i,j\geq 2$;
 $\hat \lambda_0=\mu$,
$\hat \lambda_1=\nu$,
$\hat \lambda_i=\lambda_i$ for all $i\geq 2$. 
This contradicts
the fact that the rows of $R_\CYL(\hat G-v_0v_1,\hat p)$ are
linearly independent. Hence $\dim \span \{a,b\}=2$.

Let $\P$ be the plane which passes through $p(v_1)$ and whose normal
belongs to $\span\{a,b\}^\perp$. Since $\bar p(v_1)\in
\span\{a,b\}$, $\P$ is not tangential to $\Y$ at $p(v_1)$ and hence
$\P$ intersects $\Y$ in a curve $\C$ which passes through $p(v_1)$.
Since $(\hat G-v_0v_1,\hat p)$ is infinitesimally rigid, we have
$\rank R_\CYL(\hat G-v_0v_1,q)=3|V(\hat G)|-2$ for all $q$
sufficiently close to $\hat p$ on $\Y$. Hence we may choose a simple path $Q:[0,1]\to \C$
such that $Q(0)=p(v_1)$ and such that the realisation $(\hat G,q^t)$ with $q^t|_G=p$ and
$q^t(v_0)=Q(t)$ is infinitesimally rigid for all $t\in [0,1]$.
Then $c^t=q^t(v_0)-p(v_1)$ satisfies $\span \{a,b\}=\span \{c^t,\bar
p(v_1)\}$ for all $t\in (0,1]$.

Let $R_t$ be obtained from $R_\CYL(\hat G,\hat p)$
by replacing the zero row indexed by $v_0v_1$ by the row with $c^t$
and $-c^t$ in the $v_0$ and $v_1$ columns and zeros elsewhere. The
choice of $c^t$ tells us that, for all $t\in (0,1]$, there exist unique scalars $\omega^t_{01},
\bar\omega^t_{01}, \lambda^t_0, \lambda^t_1$ such
that
\begin{equation}\label{eq:v_0}
 a+\lambda^t_0\bar p(v_1)
+\omega^t_{01}c^t=0 \end{equation} and
\begin{equation}\label{eq:v_1hat}
b+\lambda^t_1\bar p(v_1) -\bar \omega^t_{01}c^t=0.
\end{equation}
 Since $(\omega,\lambda)$ is an equilibrium
stress for $(G,p)$ we have
\begin{equation}\label{eq:v_1}
a+b+\lambda_1\bar p(v_1)=0. \end{equation}
Hence $(\lambda^t_0+\lambda^t_1)\bar p(v_1)+(
\omega^t_{01}-\bar\omega^t_{01})c^t=\lambda_1\bar p(v_1)$. It
follows that $\lambda_1=\lambda_0^t+\lambda_1^t$ and
$\omega^t_{01}=\bar\omega^t_{01}$.

We can extend $\omega^t_{01},\lambda^t_0,\lambda^t_1$ to a vector $(\omega^t,\lambda^t) \in \coker R_t$ as follows: 
$\omega^t_{0j}
=\omega_{1j}$ for $2\leq j\leq k$ and $\omega^t_{0j} =0$ for $j>
k$,
$\omega^t_{1j} =\omega_{1j}$ for 
$k+1\leq j\leq m$ and $\omega^t_{1j} =0$ for $2\leq j\leq k$ or
$j>m$, $\omega^t_{ij} =\omega_{ij}$ for $i,j\geq 2$;
$\lambda^t_i=\lambda_i$ for $i\geq 2$.
Since $\rank R_\CYL(\hat G-v_0v_1,\hat p)=3|V(\hat G)|-2$, the rows of $R_t$ indexed by $E(\hat G)-v_0v_1$  are linearly independent. The fact that $(\omega^t,\lambda^t)$ is a
non-zero vector in $\coker R_t$ now gives $
\omega^t_{01}\neq 0$.

The matrix $\Omega(\omega^t)$ defined by
$(\omega^t,\lambda^t)$ has the form
$$\Omega(\omega^t)=
\begin{bmatrix}
\omega^t_{01}+\sum_{j\geq 2}\omega^t_{0j} & -\omega^t_{01} & -\omega^t_{02}    & \dots\\[2mm]
-\omega^t_{01} & \omega^t_{01}+\sum_{j\geq 2}\omega^t_{1j} & -\omega^t_{12}  & \dots\\[2mm]
-\omega^t_{02} & -\omega^t_{12} &  \sum_{j\geq 2}\omega_{2j}  & \dots\\[1mm]
\vdots & \vdots & \vdots & \vdots   \\
 \end{bmatrix}.
$$
Let $X=\begin{bmatrix} Y & 0\\0& I\end{bmatrix}$ be the $(n+1)\times (n+1)$ block matrix with $Y=\begin{bmatrix}1 & 1\\0 & 1\end{bmatrix}$.
We may use the fact
that $\omega^t_{1j}+\omega^t_{0j}=\omega_{1j}$
for all $2\leq j\leq k$  to obtain
$$
X^T\Omega(\omega^t)X=
\begin{bmatrix}
\omega^t_{01}+\sum_{j\geq 2}\omega^t_{0j} & \sum_{j\geq 2}\omega^t_{0j} & -\omega^t_{02}    & \dots\\[2mm]
\sum_{j\geq 2}\omega^t_{0j} & \sum_{j\geq 2}\omega_{1j} & -\omega_{12}  & \dots\\[2mm]
-\omega^t_{02} & -\omega_{12} &  \sum_{j\geq 2}\omega_{2j}  & \dots\\[1mm]
\vdots & \vdots & \vdots & \vdots   \\
 \end{bmatrix}
 =
\begin{bmatrix}
A_t & B\\[1mm]
B^T & \Omega(\omega)
 \end{bmatrix}
$$
 where $A_t=(\omega^t_{01}+\sum_{j\geq
2}\omega^t_{0j})$ and $B=(\sum_{j\geq
2}\omega^t_{0j},-\omega^t_{02}, \ldots,-\omega^t_{0n})$. (Note that the entries in $B$ do not change with $t$.)
 We will show that the entry in $A_t$ is unbounded as $t\to 0$.

Let $\bar p(v_1)^\perp$ be a unit vector in $\span\{a,b\}$ which is
orthogonal to $\bar p(v_1)$. Then Equation (\ref{eq:v_0}) gives
$a\cdot \bar p(v_1)^\perp+\omega^t_{01}c^t\cdot \bar
p(v_1)^\perp=0$. Since $c^t$ and $\bar p(v_1)$ are linearly
independent, we have $c^t\cdot \bar p(v_1)^\perp\neq 0$ and
\begin{eqnarray}\label{eqn:wo1} \omega^t_{01}=-\frac{a\cdot \bar p(v_1)^\perp}{c^t\cdot \bar
p(v_1)^\perp}. \end{eqnarray}
As $t\to 0$, $q(v_0)$ approaches $p(v_1)$ on
$\C$, and hence $c^t$ will approach $(0,0,0)$. Hence $|\omega^t_{01}|$ will become
arbitrarily large. On the other hand $|w^t_{0j}|\leq |w_{1j}|$ for all $j\geq 2$ and all
$t>0$. Hence $\lim_{t\to 0}|\omega^t_{01}+\sum_{j\geq
2}\omega^t_{0j}|=\infty$.

For all $t\in (0,1]$,  let $(\tilde \omega^t,\tilde \lambda^t)$ be the unique equilibrium stress for $(\hat G,q^t)$ with
$\tilde \omega^t_{01}= \omega^t_{01}$. Then $(\tilde \omega^t,\tilde \lambda^t)$ is a continuous function of $t$ for $t\in (0,1]$, and $\lim_{t\to 0}(\tilde \omega^t-\omega^t ,\tilde \lambda^t-\lambda^t)=(0,0)$.
It follows that $X^T\Omega(\tilde\omega^t)X=\begin{bmatrix} \tilde A_t &  B_t\\[1mm]
 B_t^T& D_t\end{bmatrix}$ where $B_t,D_t$ converge to $B$ and $\Omega(\omega)$ respectively as $t\to 0$, and $ \tilde A_t=(a_t)$ with $a_t=\omega^t_{01}+\sum_{j\geq
2}\tilde\omega^t_{0j}$ so $\lim_{t\to 0}|a_t|=\infty$. We can now use
Lemma \ref{lem:matrixlimit} to deduce that $$\rank
\Omega(\tilde \omega^t)\geq 1+\rank \Omega(\omega)$$ when $t$ is sufficiently
close to zero.

We next consider the matrix $\Omega(\omega^t)+\Lambda(\lambda^t)$. We have
$$X^T(\Omega(\omega^t)+\Lambda(\lambda^t))X=X^T\Omega(\omega^t)X+X^T\Lambda(\lambda^t)X=\begin{bmatrix}\bar A_t & B_t\\[1mm] B_t^T & \Omega(\omega)+\Lambda(\lambda)\end{bmatrix}$$
where $\bar A_t=A_t+(\lambda_0^t)$ and $B_t=B+(\lambda_0^t,0,0,\ldots,0)$.
We will show that  $\lambda^t_0$ converges to a finite limit as $t\to \infty$.
We can then use a similar argument to that in the previous paragraph to deduce that $$\rank
(\Omega(\tilde\omega^t)+\Lambda(\tilde\lambda^t))\geq 1+\rank
(\Omega(\omega)+\Lambda(\lambda))$$
for all  $t$ sufficiently
close to zero.

Let $a^\perp$ be a unit vector
in $\span\{a,b\}$ which is orthogonal to $a$. Then Equation
(\ref{eq:v_0}) gives $\lambda^t_0\bar p(v_1)\cdot a^\perp+
\omega^t_{01}c\cdot a^\perp=0$. If $\bar p(v_1)\cdot a^\perp= 0$ then
$\bar p(v_1)$ would be a scalar multiple of $a$ and Equation
(\ref{eq:v_1}) would give $\dim \span\{a,b\}=1$. Hence $\bar
p(v_1)\cdot a^\perp\neq 0$ and we  may use Equation (\ref{eqn:wo1})
to deduce that
$$\lambda^t_0=-\frac{\omega^t_{01}c^t\cdot a^\perp}{\bar p(v_1)\cdot
a^\perp} =\frac{a\cdot \bar p(v_1)^\perp}{c^t\cdot \bar
p(v_1)^\perp}\,\frac{c^t\cdot a^\perp}{\bar p(v_1)\cdot a^\perp}=
\frac{a\cdot \bar p(v_1)^\perp}{\bar p(v_1)\cdot
a^\perp}\,\frac{c^t\cdot a^\perp}{c^t\cdot \bar p(v_1)^\perp}=
\frac{a\cdot \bar p(v_1)^\perp}{\bar p(v_1)\cdot
a^\perp}\,\frac{\hat c^t\cdot a^\perp}{\hat c^t\cdot \bar
p(v_1)^\perp}\,$$ where $\hat c^t$ is a unit vector in the direction
of $c^t$. As $t\to 0$, $q(v_0)$ approaches $p(v_1)$ on $\C$, and $\hat c^t$ will
approach $\bar p(v_1)^\perp$. Hence $\lambda^t_0$ will become
arbitrarily close to $\frac{a\cdot \bar p(v_1)^\perp\,\bar
p(v_1)^\perp\cdot a^\perp}{\bar p(v_1)\cdot a^\perp}$ (since $\bar p(v_1)^\perp$ has unit length).

As noted above, the fact that $\lambda^t_0$ converges to a finite limit as $t\to \infty$ implies that
$\rank
(\Omega(\tilde\omega^t)+\Lambda(\tilde\lambda^t))\geq 1+\rank
(\Omega(\omega)+\Lambda(\lambda))$ for all $t$ sufficiently close to $0$. This in turn implies that $(\tilde \omega^t,\tilde \lambda^t)$ is an equilibrium stress for $(\hat G,q^t)$
on $\Y$ with
$$\rank \Omega_{\CYL}(\tilde \omega^t,\tilde
\lambda^t)\geq \rank \Omega_{\CYL}(\omega,
\lambda)+3=3|V(\hat G)|-6$$ for all $t$ sufficiently close to $0$. The fact that equality holds follows since $3|V(\hat G)|-6$ is an upper bound on the rank of a stress matrix for any realisation of $\hat G$ on $\Y$. 
\end{proof}

Combining the results thus far we have the following key result.

\begin{thm}\label{thm:unitstress}
Let $(G,p)$ be a generic realisation of an $\M_{2,2}^*$-circuit on
$\Y$. Then $(G,p)$ has a maximum rank equilibrium
stress.
\end{thm}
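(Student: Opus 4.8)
\section*{Proof proposal for Theorem~\ref{thm:unitstress}}

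The plan is to argue by induction on $|V(G)|$, using the recursive construction of Theorem~\ref{thm:strongrecurse}. That construction realises every $\M_{2,2}^*$-circuit, starting from $K_5-e$ or $H_1$, by a sequence of $K_4^-$-extensions and generalised vertex splits in which every intermediate graph is again an $\M_{2,2}^*$-circuit. Since each of these two operations adds at least one new vertex, any $\M_{2,2}^*$-circuit $G$ other than $K_5-e$ and $H_1$ is obtained from a strictly smaller $\M_{2,2}^*$-circuit $G'$ by a single such move, so the induction reduces the statement for $G$ to the statement for $G'$ together with one application of either Theorem~\ref{lem:typea1} or Theorem~\ref{thm:vsplitmax}.

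For the base of the induction, $G=K_5-e$ and $G=H_1$, I would fix one infinitesimally rigid realisation of $G$ on $\Y$ (a symmetric placement is convenient) and check by a direct computation that the one-dimensional space of equilibrium stresses of that framework contains a pair $(\omega,\lambda)$ with $\rank\Omega_\CYL(\omega,\lambda)=3|V(G)|-6$. Lemma~\ref{lem:generic} then promotes this to the conclusion that every generic realisation of $G$ on $\Y$ carries a maximum rank equilibrium stress. This computation is the only concrete calculation left, and it is routine because the two graphs are small.

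For the inductive step, let $G$ be an $\M_{2,2}^*$-circuit distinct from $K_5-e$ and $H_1$, let $(G,p)$ be generic on $\Y$, and let $G'$ be the smaller $\M_{2,2}^*$-circuit supplied by Theorem~\ref{thm:strongrecurse}. If $G$ is a $K_4^-$-extension of $G'$, obtained by deleting an edge $v_2v_3$ of $G'$ and attaching a $K_4^-$, then $G'-v_2v_3$ is rigid on $\Y$, because an $\M_{2,2}^*$-circuit is redundantly rigid on the cylinder by Theorem~\ref{thm:cylinderlaman}. The restriction $p_1$ of $p$ to $V(G')$ is generic for $G'$, so by the inductive hypothesis $(G',p_1)$ has a maximum rank equilibrium stress; Theorem~\ref{lem:typea1} then yields a maximum rank equilibrium stress for $(G,p)$ directly. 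If instead $G$ is obtained from $G'$ by a generalised vertex split, then the inductive hypothesis gives a generic realisation of $G'$ with a maximum rank equilibrium stress and, since $G$ itself is an $\M_{2,2}^*$-circuit by Theorem~\ref{thm:strongrecurse}, Theorem~\ref{thm:vsplitmax} (applied to $G'$) produces a realisation $(G,q)$ on $\Y$ that is infinitesimally rigid and carries a maximum rank equilibrium stress. One more application of Lemma~\ref{lem:generic} transfers this to the given generic framework $(G,p)$, completing the induction.

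I do not anticipate a genuine obstacle at this stage: essentially all of the difficulty has already been absorbed into Theorems~\ref{lem:typea1} and \ref{thm:vsplitmax} and into the recursive decomposition Theorem~\ref{thm:strongrecurse}, so the proof of Theorem~\ref{thm:unitstress} is mostly an assembly of these. The two points that need a little care are the hands-on verification of the base cases $K_5-e$ and $H_1$, and keeping track of which results deliver a statement about a single realisation versus about all generic realisations, so that Lemma~\ref{lem:generic} is invoked exactly where needed (namely for the base cases and after each generalised vertex split).
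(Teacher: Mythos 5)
Your proposal is correct and follows essentially the same route as the paper: induction via the recursive construction of Theorem~\ref{thm:strongrecurse}, explicit base realisations of $K_5-e$ and $H_1$ promoted by Lemma~\ref{lem:generic}, and one application of Theorem~\ref{lem:typea1} or Theorem~\ref{thm:vsplitmax} per step. Your bookkeeping of exactly where Lemma~\ref{lem:generic} is needed (for the base cases and after each generalised vertex split, but not after a $K_4^-$-extension) is in fact slightly more explicit than the paper's own proof.
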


\begin{proof}
We apply induction on $|V|$. We  give  specific infinitesimally
rigid realisations of $K_5^-$ and $H_1$ on $\Y$ which
have maximum rank equilibrium stresses in the Appendix. Theorem
\ref{thm:cylinderlaman} and Lemma \ref{lem:generic} now imply that
every generic realisation $p$ of $K_5^-$ or $H_1$ is infinitesimally
rigid on $\Y$ and has a maximum rank equilibrium stress on $\Y$. By
Theorem \ref{thm:strongrecurse} any $\M_{2,2}^*$-circuit $G$ can be
formed from $K_5^-$ or $H_1$ by $K_4^-$-extensions and generalised
vertex splits. The result now follows from Theorems \ref{lem:typea1}
and \ref{thm:vsplitmax} and Lemma \ref{lem:generic}.
\end{proof}

\section{Globally rigid frameworks}
\label{sec:globalfull}

In this section we will prove our main results, Theorems \ref{thm:full} and  \ref{thm:globrigid}.

We say that a framework $(G,p)$ on $\Y$ is
\emph{quasi-generic} if it is congruent to a generic framework on
$\Y$. The framework $(G,p)$  is said to be in \emph{standard position} on
$\Y$ if $p(v_1)=(0,1,0)$.

\begin{lem}\label{lem:zrestricted}
Let $(G, p)$ and $(G, p')$ be equivalent quasi-generic frameworks in
standard position on $\Y$. Let $p(v_i)=(x_i,y_i,z_i)$ and
$p'(v_i)=(x_i',y_i',z_i')$ for each $v_i\in V$. Suppose that $(G,p)$
has a maximum rank equilibrium stress $(\omega,\lambda)$. Then $(z_1', z_2',\dots,
z_n')=b(z_1, z_2,\dots, z_n)$ for some $b\in \mathbb{R}$.
\end{lem}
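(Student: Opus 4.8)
The plan is to transfer the maximum rank equilibrium stress of $(G,p)$ to one of $(G,p')$ with the \emph{same} edge coefficients $\omega$ via Theorem \ref{thm:genstressPartial}, and then read the $z$-coordinates off the kernel of the common matrix $\Omega=\Omega(\omega)$.

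First I would unpack what ``maximum rank'' gives. Let $(\omega,\lambda)$ be a maximum rank equilibrium stress for $(G,p)$. Since $\Omega_\CYL(\omega,\lambda)$ is block diagonal with blocks $\Omega+\Lambda$, $\Omega+\Lambda$, $\Omega$, and $\rank(\Omega+\Lambda)\le n-2$ and $\rank\Omega\le n-2$, the equality $\rank\Omega_\CYL(\omega,\lambda)=3n-6$ forces $\rank\Omega=n-2$; as $\Omega$ is symmetric, $\ker\Omega=\coker\Omega$ is $2$-dimensional. By the remarks following \eqref{eq:stressdefn}, both $\mathbf 1=(1,\dots,1)$ and $z=(z_1,\dots,z_n)$ lie in $\ker\Omega$. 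Moreover $\mathbf 1$ and $z$ are linearly independent: $(G,p)$ is quasi-generic, every congruence of $\mathbb R^3$ preserving $\Y$ acts on the last coordinate by $z\mapsto \pm z+c$, and a generic framework on $\Y$ with $n\ge 2$ vertices does not have all $z$-coordinates equal, so neither does $(G,p)$. Hence $\ker\Omega=\span\{\mathbf 1,z\}$.

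Next I would apply Theorem \ref{thm:genstressPartial}. Although it is stated for generic frameworks, I would first observe that equilibrium stresses, together with the matrices $\Omega(\omega)$, $\Lambda(\lambda)$ and $\Omega_\CYL(\omega,\lambda)$ they determine, are invariant under any congruence of $\mathbb R^3$ preserving $\Y$: such a congruence fixes each difference $p(v_i)-p(v_j)$ and each $\bar p(v_i)$ up to right multiplication by a common invertible $3\times 3$ matrix (or fixes them outright, in the case of a translation along the axis), so it right-multiplies $R_\CYL(G,p)$ by an invertible block matrix and leaves $\coker R_\CYL(G,p)$ unchanged, while $\Omega$ and $\Lambda$ depend only on $\omega$ and $\lambda$. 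Thus if $(G,\tilde p)$ is a generic framework congruent to $(G,p)$, then $(\omega,\lambda)$ is a maximum rank equilibrium stress for $(G,\tilde p)$ as well, and $(G,p')$ is equivalent to $(G,\tilde p)$. Theorem \ref{thm:genstressPartial} then yields $\lambda'\in\mathbb R^n$ with $(\omega,\lambda')$ an equilibrium stress for $(G,p')$ --- crucially with the same $\omega$, hence the same $\Omega$.

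Finally, the third coordinate of \eqref{eq:stressdefn} for $(G,p')$ reads $\Omega z'=0$, where $z'=(z'_1,\dots,z'_n)$, since the term $\lambda'_i\bar p'(v_i)$ has zero third coordinate. So $z'\in\ker\Omega=\span\{\mathbf 1,z\}$, i.e.\ $z'_i=a+bz_i$ for all $i$ and some $a,b\in\mathbb R$. Since $(G,p)$ and $(G,p')$ are both in standard position, $z_1=z'_1=0$, whence $a=0$ and $z'=bz$, as required. The only point requiring any care is the reduction to the generic case when invoking Theorem \ref{thm:genstressPartial}; the remainder is a direct linear-algebra computation.
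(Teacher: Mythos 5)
Your proof is correct and follows essentially the same route as the paper's: apply Theorem \ref{thm:genstressPartial} to obtain an equilibrium stress for $(G,p')$ with the same $\omega$, note that the maximum rank hypothesis forces $\rank\Omega=n-2$ so that $(1,\dots,1)$ and $(z_1,\dots,z_n)$ form a basis of $\coker\Omega$, and conclude $z'=bz+c(1,\dots,1)$ with $c=0$ from standard position. You are in fact slightly more careful than the paper on two points it leaves implicit, namely the reduction from quasi-generic to generic before invoking Theorem \ref{thm:genstressPartial} and the linear independence of $(1,\dots,1)$ and $(z_1,\dots,z_n)$.
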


\begin{proof}
By Theorem \ref{thm:genstressPartial}, $(\omega,
\lambda')$ is an  equilibrium stress  for $(G,p')$ for some $\lambda'\in \mathbb{R}$. Let 
$$
\Omega_\CYL(\omega,\lambda)= \begin{bmatrix} \Omega + \Lambda & 0 & 0\\ 0 & \Omega + \Lambda & 0\\ 0 & 0 & \Omega \end{bmatrix} \mbox{ and } \Omega_\CYL(\omega,\lambda')=\begin{bmatrix} \Omega + \Lambda' & 0 & 0\\ 0 & \Omega + \Lambda' & 0\\ 0 & 0 & \Omega \end{bmatrix}
$$
be the stress matrices corresponding to $(\omega,
\lambda)$ and $(\omega,
\lambda')$, respectively.
Observe that the submatrix $\Omega$ is the same in both stress matrices. We saw in
Section \ref{sec:rig} that $\rank \Omega =n-2$ and that
$\{(z_1,z_2,\dots,z_n), (1,1,\dots,1)\}$ is a basis for $\coker \Omega$.
Since $(z_1',z_2',\dots,z_n')\in \coker \Omega$ we have $(z_1',
z_2',\dots, z_n')=b(z_1, z_2,\dots, z_n)+c (1, 1,\dots, 1) $ for
some $b, c \in \mathbb{R}$. The hypothesis  that $z_1=z_1'=0$, now gives $c=0$.
\end{proof}

Our next result tells us that equivalent quasi-generic frameworks which satisfy the conclusion of Lemma \ref{lem:zrestricted} are in fact congruent. Its proof requires the introduction of a new concept. We willl  delay this to Section \ref{sec:restricted} and instead show how the result can be used to prove Theorem \ref{thm:globrigid}.

\begin{lem}\label{lem:zrestricted2}
Let $G$ be a 2-connected graph with at least $n+1$ edges, and $(G, p)$ and $(G, p')$ be equivalent quasi-generic frameworks in
standard position on $\Y$.  Suppose that $p(v_i)=(x_i,y_i,z_i)$ and $p(v_i')=(x_i',y_i',z_i')$ 
for all $v_i\in V$, and that $(z_1', z_2',\dots, z_n')=b(z_1, z_2,\dots, z_n)$ for some $b\in \mathbb{R}$. Then $(G,p')$ is congruent to $(G,p)$.
\end{lem}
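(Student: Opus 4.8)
The plan is to work in angular coordinates and to reduce the claim to two facts: that $b^2=1$, and that the two induced realisations on the unit circle differ by a single isometry of the circle. Write $p(v_i)=(\cos\theta_i,\sin\theta_i,z_i)$, $p'(v_i)=(\cos\theta_i',\sin\theta_i',z_i')$, and $\bar p(v_i)=(\cos\theta_i,\sin\theta_i)$, $\bar p'(v_i)=(\cos\theta_i',\sin\theta_i')$. Since $\|p(v_i)-p(v_j)\|^2=2-2\cos(\theta_i-\theta_j)+(z_i-z_j)^2$ and $z_i'-z_j'=b(z_i-z_j)$, equivalence of $(G,p)$ and $(G,p')$ says exactly
\[
\cos(\theta_i'-\theta_j')-\cos(\theta_i-\theta_j)=\tfrac{b^2-1}{2}(z_i-z_j)^2\qquad\text{for every }v_iv_j\in E.
\]
If $b^2=1$ and $\bar p'$ is the image of $\bar p$ under a rotation or a reflection of the unit circle, then $\theta_i'-\theta_j'\equiv\pm(\theta_i-\theta_j)$ with one global sign for \emph{all} pairs $i,j$, so $\|p'(v_i)-p'(v_j)\|=\|p(v_i)-p(v_j)\|$ for all $i,j$ and $(G,p')$ is congruent to $(G,p)$. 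Hence it suffices to prove (a) $b^2=1$, and (b) that once $b^2=1$ the equivalent circle frameworks $\bar p$ and $\bar p'$ are related by a rotation or a reflection.

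Part (a) is the heart of the matter, and this is where the hypothesis $|E|\ge n+1$ is used: it is precisely the assertion that $G$ is globally rigid on the cylinder among motions that dilate the axis coordinate. An ear decomposition shows that a $2$-connected graph with at least $n+1$ edges contains a subdivision of a theta graph (two vertices joined by three internally disjoint paths), so the cycle space of $G$ has dimension at least $2$. Writing $t=(b^2-1)/2$, the displayed identity expresses each $\cos(\theta_i'-\theta_j')$ as an explicit affine function of $t$ with coefficients in $\bQ(p)$; rationalising the closure conditions $\prod e^{\mathrm{i}(\theta'_{u_{l+1}}-\theta'_{u_l})}=1$ around two independent cycles of the theta subgraph — which eliminates the choice of branch for each $\sin(\theta_i'-\theta_j')$ — produces two polynomial equations in the single unknown $t$, both having $t=0$ among their roots, and genericity of $(G,p)$ then forces $t=0$, i.e. $b^2=1$. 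A local version of this can be seen directly: on the $(n+1)$-dimensional family of realisations of $G$ on $\Y$ whose height vector is a scalar multiple of $(z_1,\dots,z_n)$, the map recording squared edge lengths has at $p$ a Jacobian of rank $n$ — its $\theta$-block is $2\,\mathrm{diag}(\sin(\theta_i-\theta_j))$ times the oriented incidence matrix of $G$ (rank $n-1$ by connectedness and genericity), and the $\partial/\partial b$ column lies outside its span because some cycle of $G$ fails to be orthogonal to the generic vector $\big((z_i-z_j)^2/\sin(\theta_i-\theta_j)\big)_{v_iv_j\in E}$ — so the fibre through $p$ is locally a single rotation orbit with $b$ fixed. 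The hard part, which I expect to be the main obstacle, is globalising this so as to rule out a far-away equivalent realisation with $b^2\ne1$; this is the substance of the restricted (``VR'') rigidity analysis, and is what I would invoke here.

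For part (b), when $b^2=1$ the displayed identity becomes $\cos(\theta_i'-\theta_j')=\cos(\theta_i-\theta_j)$ for every edge, i.e. $\theta_i'-\theta_j'\equiv\pm(\theta_i-\theta_j)\pmod{2\pi}$ for each edge. Normalising by a rotation so that $\theta_1'=\theta_1$ and building $\theta'$ along a spanning tree of $G$ — each tree edge contributing a sign — one uses that $G$ is $2$-connected, hence $2$-edge-connected: every non-tree edge has a fundamental cycle, around which the chosen signs must satisfy $\sum_i(\varepsilon_i-1)(\theta_{a_i}-\theta_{b_i})\equiv0\pmod{2\pi}$. Since $(G,p)$ is generic, $\td[\bQ(p):\bQ]=2n$, and hence the $\theta_i$ admit no non-trivial integral relation modulo $2\pi$; therefore the signs around each cycle must all agree, and as $G$ is connected this forces either $\theta'\equiv\theta$ everywhere or $\theta'\equiv2\theta_1-\theta$ everywhere, i.e. $\bar p'$ is a rotation or a reflection of $\bar p$. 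Together with part (a) this shows that $\cos(\theta_i'-\theta_j')=\cos(\theta_i-\theta_j)$ and $|z_i'-z_j'|=|z_i-z_j|$ for all pairs, so all pairwise distances agree and $(G,p')$ is congruent to $(G,p)$.
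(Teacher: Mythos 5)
Your reduction to the two claims (a) $b^2=1$ and (b) the circle projections differ by a single rotation or reflection is the right way to organise the problem, and your argument for (b) is essentially sound: the sign-consistency argument around fundamental cycles, using the absence of nontrivial multiplicative relations among the $e^{\mathrm{i}\theta_j}$ at a generic point, does force a global rotation or reflection once $b^2=1$, and then all pairwise distances agree. However, part (a) is where the entire difficulty of the lemma lives, and you have not proved it. Your local Jacobian computation only shows that the fibre of the restricted edge-length map is locally a rotation orbit; it says nothing about a far-away equivalent realisation with $b^2\neq 1$, which is exactly what must be excluded. You acknowledge this and say you ``would invoke'' the restricted (VR) rigidity analysis --- but that analysis \emph{is} this lemma (it is Theorem \ref{thm:vrglobal} in the paper), so the appeal is circular. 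The intermediate sketch is also not a valid deduction as stated: producing two polynomial equations in $t=(b^2-1)/2$ that both have $t=0$ among their roots does not force $t=0$, since the polynomials may share other roots; one would need to control the full common zero set, over all $2^{|E|}$ branch choices for the sines, and show that every common root yields a congruent realisation.

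For comparison, the paper closes this gap by an induction that never has to solve a system in a single global parameter $t$. It first shows (Theorem \ref{thm:vrmin}) that connected graphs with exactly one cycle are VR-rigid, then proves a field-closure lemma (Lemma \ref{lem:closures}) so that the coordinates of any VR-equivalent framework lie in $\overline{\bQ(p)}$, and then builds every $2$-connected graph with at least $n+1$ edges from $K_4-e$ by subdivisions and edge additions. Each subdivision step (Lemma \ref{lem:mapleVR}) introduces exactly one new vertex whose coordinate $z_0$ is transcendental over the field generated by everything else; writing the compatibility conditions as a polynomial in $z_0$ forces all its coefficients to vanish, and the leading coefficient factors so as to give $c_1=\pm z_1$, i.e.\ $b=\pm1$, after which the remaining coefficients pin down the positions up to reflection. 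It is this one-transcendental-at-a-time structure that makes the elimination argument rigorous, and it is the ingredient your proposal is missing.
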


\begin{proof}[{\bf Proof of Theorem  \ref{thm:globrigid}}]
Suppose that $(G,p')$ is equivalent to $(G,p)$. We may assume that $(G,p)$ is in fact quasi-generic and that and $(G,p)$ and $(G,p')$ are both in standard position on $\Y$. Theorem \ref{thm:unitstress} implies that $(G,p)$ has a maximum rank equilibrium stress. Lemmas \ref{lem:zrestricted} and \ref{lem:zrestricted2} now imply that $(G,p)$ and $(G,q)$ are congruent. 
\end{proof}

We will need some further concepts and results from  matroid theory to deduce Theorem \ref{thm:full} from Theorem \ref{thm:globrigid}. A matroid is \emph{connected} if any pair of edges is contained in a common circuit.

\begin{lem}\cite[Theorem 5.4]{Nix}\label{lem:mcon}
Suppose that $G$ is a graph. Then $\M_{2,2}^*(G)$ is connected if and only if $G$ is $2$-connected and redundantly rigid on $\Y$.
\end{lem}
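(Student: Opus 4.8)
The statement is an equivalence, so I would prove the two directions separately, and each direction most naturally proceeds by contraposition. The underlying dictionary is: a matroid is connected iff no proper nonempty subset of the ground set is a separator, i.e. iff the ground set does not split as a direct sum of two smaller matroids; equivalently, every pair of elements lies in a common circuit. So the task is to translate matroid separators of $\M_{2,2}^*(G)$ into graph-theoretic obstructions, namely a $1$-separation of $G$ (i.e. a cut vertex, or a disconnection) or the failure of redundant rigidity on the cylinder.

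For the ``only if'' direction I would show: if $G$ is not $2$-connected, or not redundantly rigid on $\Y$, then $\M_{2,2}^*(G)$ is disconnected. If $G$ is disconnected or has a cut vertex, write $G = G_1 \cup G_2$ with $|V(G_1) \cap V(G_2)| \le 1$ and $E(G_1), E(G_2)$ both nonempty; then the count function $2|V(\cdot)| - 2$ is (sub)additive across this separation in the way needed to show that $E(G_1)$ is a separator of $\M_{2,2}^*(G)$ — no circuit can use edges from both sides because a circuit of $\M_{2,2}^*$ is $2$-connected (this is \cite[Lemma 2.3]{Nix}, invoked in the proof of Theorem \ref{thm:nearly3con}). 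If instead $G$ is $2$-connected but not redundantly rigid on $\Y$, then there is an edge $e$ with $G - e$ not rigid on the cylinder; by Theorem \ref{thm:cylinderlaman} this means $\M_{2,2}^*(G-e)$ has rank $< 2|V|-2$, so $e$ lies in no circuit of $\M_{2,2}^*(G)$ (a circuit through $e$ would certify $e$ is not a coloop of the restriction, forcing full rank of $G-e$ on the relevant vertex set), hence $e$ is a coloop and $\M_{2,2}^*(G)$ is disconnected. One must be slightly careful with the exceptional small complete graphs in Theorem \ref{thm:cylinderlaman}, but for graphs with $|V| \ge 4$ the matroid rank formulation is exactly $2|V|-2$ and there is nothing to worry about; the small cases can be checked directly.

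For the ``if'' direction, assume $G$ is $2$-connected and redundantly rigid on $\Y$, and show $\M_{2,2}^*(G)$ is connected, i.e. every pair of edges $e, f$ lies in a common circuit. Redundant rigidity gives that $E(G)$ spans a matroid of rank $2|V|-2$ and that $E(G) - e$ still has this rank for every $e$, so every edge lies in \emph{some} circuit; the content is to get $e$ and $f$ into the \emph{same} circuit. The standard tool here is that the circuits through a fixed element $e$, together with their ``closure'' under the circuit axioms, partition the ground set into the connected components of the matroid, and two elements are in the same component iff there is a sequence of circuits $C_1, \dots, C_k$ with $e \in C_1$, $f \in C_k$, and $C_i \cap C_{i+1} \ne \emptyset$. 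So I would argue that $2$-connectivity of $G$ plus redundant rigidity forces this ``chain of overlapping circuits'' to connect any two edges — for instance by showing that if $\{E_1, E_2\}$ were a nontrivial separation of the matroid, the vertex sets $V(E_1)$ and $V(E_2)$ would have to meet in at most one vertex (otherwise a counting argument using $|E_1| + |E_2| = |E| = 2|V| - 2 \le 2(|V(E_1)| + |V(E_2)| - |V(E_1) \cap V(E_2)|) - 2$ together with $|E_i| \le 2|V(E_i)| - 2$ — valid because each $E_i$ is a flat and hence itself gives a rank-$(2|V(E_i)|-2)$ restriction — would be violated), contradicting $2$-connectivity of $G$; and the case $|V(E_1) \cap V(E_2)| \le 1$ contradicts either $2$-connectivity again or redundant rigidity (an edge in the smaller part could not be redundant).

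The main obstacle I anticipate is the bookkeeping in the separation/counting argument for the ``if'' direction: pinning down exactly why a matroid separator $E_1$ of $\M_{2,2}^*(G)$ must have $E_1$ induced on its vertex set in the right way, and handling the inequality $|E_i| \le 2|V(E_i)| - 2$ with the correct treatment of the special $(2,2)$-sparse condition at two-vertex subsets. Since this lemma is cited from \cite[Theorem 5.4]{Nix}, I would in practice lean on the sparsity-matroid machinery developed there (connectivity of count matroids, the structure of their circuits) rather than reprove it from scratch, and the exceptional small-graph cases of Theorem \ref{thm:cylinderlaman} would need a quick separate verification.
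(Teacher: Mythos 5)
This lemma is not proved in the paper at all: it is imported verbatim from \cite[Theorem 5.4]{Nix}, so there is no internal argument to compare yours against; I can only judge your outline on its own merits. Your ``only if'' direction is essentially right: circuits of $\M_{2,2}^*$ are $2$-connected graphs, so no circuit crosses a $1$-separation of $G$; and if the matroid is connected then every edge lies in a circuit, each circuit induces a subgraph of rank $2|V(C)|-2$ (hence rigid by Theorem \ref{thm:cylinderlaman}), and chaining overlapping circuits shows $G$ is rigid and then redundantly rigid. The one point you must add is the subcase where $G$ itself is not rigid: there your parenthetical ``a circuit through $e$ forces full rank of $G-e$'' does not apply, and you need the chaining argument just described to rule that case out first.

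The genuine gap is in the ``if'' direction, where your counting is miscalibrated in two ways. First, $|E|=2|V|-2$ is false for a redundantly rigid graph ($|E|\geq 2|V|-1$, as the paper notes before Theorem \ref{thm:globrigid}); the identity available for a matroid separation $\{E_1,E_2\}$ is $r(E_1)+r(E_2)=r(E)=2|V|-2$, and a separator is in general neither a flat nor of rank $2|V(E_i)|-2$ on its vertex set, so you cannot replace $r(E_i)$ by $|E_i|$ or by $2|V(E_i)|-2$ without justification. Second, even done with ranks, the inequality $2|V|-2=r(E_1)+r(E_2)\leq (2|V(E_1)|-2)+(2|V(E_2)|-2)$ yields $|V(E_1)\cap V(E_2)|\geq 1$ --- the opposite of the bound $\leq 1$ you claim --- and it does not exclude $|V(E_1)\cap V(E_2)|\geq 2$ at all, so the route ``counting forces a small vertex intersection, contradicting $2$-connectivity'' collapses as stated. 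The standard repair is to argue with the connected components $D_1,\dots,D_k$ of the matroid rather than an arbitrary separation: redundant rigidity excludes coloops, so each $D_i$ is a connected matroid on at least two elements, every edge of $D_i$ lies in a circuit of $D_i$, and chaining circuits shows $r(D_i)=2|V(D_i)|-2$ exactly. Then $\sum_i\bigl(2|V(D_i)|-2\bigr)=2|V|-2$ gives $\sum_i|V(D_i)|=|V|+k-1$, and since the sets $V(D_i)$ cover $V$ and $G$ is connected, this forces (for $k\geq 2$) the component--vertex incidence structure to be a tree in which components overlap only in single vertices; a leaf component then meets the rest of $G$ in a single vertex, which is a cut vertex, contradicting $2$-connectivity. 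Without the intermediate step that each matroid component has full rank on its vertex set, the count has no traction.
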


Let $\mathcal{M}=(E,\mathcal{I})$ be a matroid and let $C_1,C_2,\dots,C_t$ be a non-empty sequence of circuits of $\mathcal{M}$. Let $D_j=C_1\cup C_2\cup \dots \cup C_j$ for $1\leq j\leq t$ and suppose $D_t=E$. We say that $C_1,C_2,\dots,C_t$ is an \emph{ear decomposition} of $\mathcal{M}$ if for all $2\leq i\leq t$ the following properties hold:
\begin{enumerate}
\item $C_i\cap D_{i-1}\neq \emptyset$;
\item $C_i-D_{i-1}\neq \emptyset$;
\item no circuit $C_i'$ satisfying (1) and (2) has $C_i'-D_{i-1}$ properly contained in $C_i-D_{i-1}$.
\end{enumerate}

\begin{lem}[\cite{C&H}]\label{lem:ear}
A matroid is connected if and only if it has an ear decompostion.
\end{lem}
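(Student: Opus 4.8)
The plan is to prove both implications directly from the definitions, using only the well-known fact that the relation on the ground set $E$ given by ``$e=f$, or $e$ and $f$ lie in a common circuit of $\mathcal{M}$'' is an equivalence relation whose classes are exactly the connected components of $\mathcal{M}$; thus $\mathcal{M}$ is connected precisely when this relation has a single class. Note also that any circuit of $\mathcal{M}$ is contained in a single component. (The one-element and empty matroids are degenerate with respect to the definition of ear decomposition; I will tacitly assume $|E|\ge 2$, in which case a connected $\mathcal{M}$ has no loops and no coloops.)

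For the ``if'' direction, suppose $\mathcal{M}=(E,\mathcal{I})$ has an ear decomposition $C_1,\dots,C_t$ with $D_j=C_1\cup\cdots\cup C_j$. I would show by induction on $j$ that $D_j$ lies in a single component of $\mathcal{M}$. The base case is immediate since $D_1=C_1$ is a circuit. For $j\ge 2$, write $D_j=D_{j-1}\cup C_j$; by induction $D_{j-1}$ lies in one component, $C_j$ lies in one component, and property~(1) of the ear decomposition furnishes an element of $C_j\cap D_{j-1}$, forcing these two components to be equal. Hence $D_t=E$ lies in a single component and $\mathcal{M}$ is connected.

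For the ``only if'' direction, suppose $\mathcal{M}$ is connected, and build an ear decomposition greedily. Start with any circuit $C_1$ (one exists since $\mathcal{M}$ is connected with at least two elements) and put $D_1=C_1$. Given circuits $C_1,\dots,C_{i-1}$ satisfying~(1)--(3) with $D_{i-1}\ne E$, choose $f\in E\setminus D_{i-1}$ and any $e\in D_{i-1}$ (possible since $D_{i-1}\supseteq C_1\ne\emptyset$); by connectivity there is a circuit through $e$ and $f$, and this circuit satisfies~(1) and~(2). Therefore the collection $\{\,C'\setminus D_{i-1} : C' \text{ a circuit satisfying (1) and (2)}\,\}$ is non-empty, and, $E$ being finite, it has an inclusion-minimal member; letting $C_i$ attain it gives~(1), (2) and~(3) at once. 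Since $C_i\setminus D_{i-1}\ne\emptyset$ we have $D_{i-1}\subsetneq D_i$, so the $D_i$ strictly increase and the construction terminates with $D_t=E$, producing the required ear decomposition.

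The only genuine use of connectivity --- and so the crux of the argument --- is the guarantee that at each greedy step \emph{some} circuit meets both $D_{i-1}$ and $E\setminus D_{i-1}$; once such a circuit exists, condition~(3) is obtained for free by passing to one whose ``new part'' $C_i\setminus D_{i-1}$ is inclusion-minimal. I do not expect any real obstacle: the proof is short bookkeeping on top of the component characterisation, the only care needed being the treatment of circuits versus components and the exclusion of the trivial small cases.
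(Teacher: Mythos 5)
Your proof is correct. The paper gives no argument for this lemma at all --- it is quoted from Coullard and Hellerstein \cite{C&H} --- so there is nothing to compare against; what you have written is the standard proof. Both directions are sound: the ``if'' direction is correct bookkeeping once you grant that the relation ``lies in a common circuit'' is transitive (this is the classical Whitney/Tutte component theorem, a genuinely nontrivial input which you rightly flag as the only external fact used), and the ``only if'' direction correctly observes that connectivity supplies, at each stage, a circuit meeting both $D_{i-1}$ and its complement, after which condition (3) is achieved by taking a circuit whose new part $C_i\setminus D_{i-1}$ is inclusion-minimal; finiteness of $E$ guarantees both that such a minimal choice exists and that the construction terminates. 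Your handling of the degenerate cases $|E|\le 1$ is also appropriate.
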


We also need a `glueing' lemma for combining globally rigid frameworks. Its proof uses  the following result.

\begin{lem}\cite[Lemma 14]{JMN}\label{lem:isom}
Let $G$ be a graph with at least five vertices and
$(G,p)$ and $(G,q)$ be congruent generic realisations of $G$ on the cylinder $\Y$.
Then  $\iota\circ p = q$ for some isometry $\iota$ of $\Y$.
\end{lem}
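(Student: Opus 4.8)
The plan is to realise $q$ as the image of $p$ under an ambient isometry of $\mathbb{R}^3$ and then to use genericity to force that isometry to preserve the cylinder. Since $(G,p)$ is generic and $n\ge 5$, the points $p(v_1),\dots,p(v_n)$ affinely span $\mathbb{R}^3$: the surface $\Y$ lies in no plane, so affine dependence of $n\ge 4$ points of $\Y$ is a proper algebraic condition on $\Y^n$ and hence fails for a generic configuration. By the standard fact that two congruent finite point configurations in $\mathbb{R}^3$, at least one of which affinely spans $\mathbb{R}^3$, are related by a unique isometry of $\mathbb{R}^3$ (via equality of Gram matrices), there is an isometry $\Phi$ of $\mathbb{R}^3$ with $\Phi\circ p=q$. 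It therefore suffices to prove $\Phi(\Y)=\Y$, for then $\iota:=\Phi|_\Y$ is an isometry of $\Y$ with $\iota\circ p=q$.

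Let $\ell$ be the axis of $\Y$ (the $z$-axis) and put $\ell'=\Phi(\ell)$. For each $i$, the point $q(v_i)$ lies on $\Y$, so it is at distance $1$ from $\ell$; and $q(v_i)=\Phi(p(v_i))$ with $p(v_i)\in\Y$ at distance $1$ from $\ell$, so $q(v_i)$ is also at distance $1$ from $\ell'$. Hence $q(v_1),\dots,q(v_n)$ all lie in $\Y\cap\Y'$, where $\Y'$ is the unit cylinder with axis $\ell'$. If $\ell=\ell'$, then $\Phi$ maps the axis of $\Y$ to itself and hence maps the unit-distance tube around $\ell$ to itself, i.e.\ $\Phi(\Y)=\Y$, and we are done. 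So assume $\ell\neq\ell'$, aiming for a contradiction.

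Since a unit cylinder determines its axis, $\Y\neq\Y'$; as their complexifications are distinct irreducible quadric surfaces (each the defining polynomial being an affine image of the irreducible $x^2+y^2-1$), the intersection $\C=\Y\cap\Y'$ has dimension at most $1$. A line in $\mathbb{R}^3$ is specified by four real parameters, so write $\ell'$ in terms of reals $\alpha_1,\dots,\alpha_4$; then $\C$ is defined over $\mathbb{Q}(\alpha_1,\dots,\alpha_4)$, and each $q(v_i)$ lies on a variety of dimension at most $1$ over this field. Using additivity of transcendence degree in towers,
$$\td[\mathbb{Q}(q):\mathbb{Q}]\ \le\ \td[\mathbb{Q}(\alpha_1,\dots,\alpha_4):\mathbb{Q}]+n\ \le\ 4+n.$$
But $(G,q)$ is generic, so $\td[\mathbb{Q}(q):\mathbb{Q}]=2n$, giving $2n\le n+4$ and hence $n\le 4$, contradicting $n\ge 5$. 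Therefore $\ell=\ell'$, so $\Phi(\Y)=\Y$, which completes the proof.

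I expect the routine parts — existence and uniqueness of $\Phi$, irreducibility of the cylinder, and the passage from $\Phi(\ell)=\ell$ to $\Phi(\Y)=\Y$ — to cause no difficulty. The one point that needs care is the transcendence-degree bookkeeping in the last step: the key observation is that although $\Phi$ itself carries up to six parameters, the curve $\C=\Y\cap\Y'$ to which all $q(v_i)$ are confined depends only on the four-parameter datum of the line $\ell'$, so the $n$ vertex images are restricted to a ``cheap'' variety while genericity makes them ``expensive''; balancing these two counts is precisely where the hypothesis $n\ge 5$ enters.
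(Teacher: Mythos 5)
The paper does not prove this lemma at all --- it is imported verbatim from \cite[Lemma 14]{JMN} --- so there is no internal proof to compare against. Your argument is correct and is essentially the standard one underlying the cited result: extend the congruence to an ambient isometry $\Phi$ of $\mathbb{R}^3$ (valid since $n\geq 4$ generic points on $\Y$ affinely span), and then rule out $\Phi(\Y)\neq\Y$ by observing that all $n$ points of the generic configuration $q$ would be confined to the at-most-one-dimensional intersection of two distinct irreducible quadrics defined over a field of transcendence degree at most $4$, forcing $2n\leq n+4$ and hence $n\leq 4$. The transcendence-degree bookkeeping is the only delicate step and you have it right; the choice of a rational chart for the line $\ell'$ is a harmless technicality.
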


Note that the isometries of $\Y$ are translations along and rotations about the $z$-axis, as well as reflections in any plane containing or orthogonal to the $z$-axis. Hence Lemma \ref{lem:isom} implies that, if $(G,p)$ and $(G,q)$ are congruent generic frameworks on $\Y$ with at least five vertices satisfying $p(v_1)=q(v_1)$ and $p(v_2)=q(v_2)$ for two distinct vertices $v_1,v_2$ of $G$,  then $p=q$.

\begin{lem}\label{lem:glue}
Let $G_1$ and $G_2$ be graphs on at least five vertices and with at least two vertices in common. Let $(G,p)$ be a generic realisation of $G=G_1\cup G_2$ on $\Y$ and let $p_i=p|_{G_i}$. Suppose that $(G_i,p_i)$ is globally rigid on
$\mathcal{Y}$ for $i=1,2$. Then  $(G,p)$ is globally rigid on $\mathcal{Y}$.
\end{lem}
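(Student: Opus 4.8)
The plan is to take an arbitrary framework $(G,q)$ on $\Y$ that is equivalent to $(G,p)$ and show it is congruent to $(G,p)$. By restricting $q$ to $G_1$ we obtain a framework $(G_1,q_1)$ equivalent to $(G_1,p_1)$; since $(G_1,p_1)$ is globally rigid on $\Y$, $(G_1,q_1)$ is congruent to $(G_1,p_1)$. The point is that because $G$ is generic, the subframeworks $(G_i,p_i)$ are themselves generic, and $G_1$ has at least five vertices, so Lemma~\ref{lem:isom} applies: there is an isometry $\iota_1$ of $\Y$ with $\iota_1\circ q_1 = p_1$. Replacing $q$ by $\iota_1\circ q$ (which changes nothing about equivalence or congruence to $(G,p)$) we may assume $q_1 = p_1$, i.e. $q$ and $p$ agree on all of $V(G_1)$.

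Next I would apply the same reasoning to $G_2$: $(G_2,q_2)$ is equivalent to $(G_2,p_2)$, hence congruent to it by global rigidity of $(G_2,p_2)$, hence (again by Lemma~\ref{lem:isom}, since $G_2$ has at least five vertices and is generic) there is an isometry $\iota_2$ of $\Y$ with $\iota_2\circ q_2 = p_2$. The key observation is that $q$ and $p$ already agree on $V(G_1)\cap V(G_2)$, which by hypothesis contains at least two distinct vertices $u,w$. Therefore the isometry $\iota_2$ fixes the two distinct points $p(u)=q(u)$ and $p(w)=q(w)$ on $\Y$. The remark following Lemma~\ref{lem:isom} states precisely that an isometry of $\Y$ fixing two distinct points of a generic configuration on $\Y$ must be the identity on the relevant configuration; more directly, the nontrivial isometries of $\Y$ (translations along the axis, rotations about the axis, reflections) cannot fix two distinct generic points of $\Y$ simultaneously. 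Hence $\iota_2$ is the identity and $q_2 = p_2$, so $q = p$ on $V(G) = V(G_1)\cup V(G_2)$, and in particular $(G,q)$ is congruent to $(G,p)$.

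The one point that needs a little care is the application of Lemma~\ref{lem:isom}: it is stated for congruent \emph{generic} realisations, so I must check that $(G_i,p_i)$ and $(G_i,q_i)$ are generic, or at least quasi-generic, on $\Y$. Genericity of $p$ on $\Y$ gives genericity of each $p_i$ immediately (a subset of an algebraically independent set is algebraically independent). For $q_i$, the congruence $(G_i,q_i)\cong(G_i,p_i)$ means $q_i$ is obtained from the generic $p_i$ by a congruence of $\Y$, so $q_i$ is quasi-generic, and Lemma~\ref{lem:isom} applies with $p_i$ in the generic slot and $q_i$ congruent to it. I would phrase the argument so that the first application of Lemma~\ref{lem:isom} is used to normalise $q$ on $G_1$, after which the second application — now with $q_2$ and $p_2$ agreeing on two vertices — forces equality outright. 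This second step, extracting $q=p$ from agreement on the two common vertices, is where the ``glueing'' really happens, but given the remark after Lemma~\ref{lem:isom} it is short; the main (very minor) obstacle is simply bookkeeping the genericity/quasi-genericity hypotheses so that Lemma~\ref{lem:isom} may legitimately be invoked.

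\begin{proof}
Let $(G,q)$ be any framework on $\Y$ equivalent to $(G,p)$, and put $q_i = q|_{G_i}$ for $i=1,2$. Since $p$ is generic on $\Y$, each $p_i$ is generic on $\Y$. The framework $(G_1,q_1)$ is equivalent to $(G_1,p_1)$, so by global rigidity of $(G_1,p_1)$ it is congruent to $(G_1,p_1)$; as $G_1$ has at least five vertices and $p_1$ is generic, Lemma \ref{lem:isom} provides an isometry $\iota_1$ of $\Y$ with $\iota_1\circ q_1 = p_1$. Replacing $q$ by $\iota_1\circ q$ (which preserves equivalence to $(G,p)$ and does not affect whether $(G,q)$ is congruent to $(G,p)$), we may assume $q_1 = p_1$; in particular $q(v) = p(v)$ for every $v\in V(G_1)$.

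Now consider $G_2$. The framework $(G_2,q_2)$ is equivalent, hence congruent, to $(G_2,p_2)$, and since $G_2$ has at least five vertices and $p_2$ is generic, Lemma \ref{lem:isom} gives an isometry $\iota_2$ of $\Y$ with $\iota_2\circ q_2 = p_2$. By hypothesis $V(G_1)\cap V(G_2)$ contains two distinct vertices $u,w$, and for these we have $q(u) = p(u)$ and $q(w) = p(w)$ from the previous paragraph. Hence $\iota_2$ fixes the two distinct points $p(u)$ and $p(w)$ of the generic configuration $p_2$ on $\Y$. By the remark following Lemma \ref{lem:isom}, this forces $\iota_2\circ p_2 = p_2$, and therefore $q_2 = \iota_2^{-1}\circ p_2 = p_2$. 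Thus $q(v) = p(v)$ for all $v\in V(G_2)$ as well, so $q = p$ on $V(G) = V(G_1)\cup V(G_2)$. In particular $(G,q)$ is congruent to $(G,p)$, and since $(G,q)$ was an arbitrary equivalent framework, $(G,p)$ is globally rigid on $\Y$.
\end{proof}
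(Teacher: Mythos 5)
Your proof is correct and follows essentially the same route as the paper: apply Lemma \ref{lem:isom} to each $(G_i,q_i)$ via global rigidity of $(G_i,p_i)$, then use the fact that an isometry of $\Y$ fixing two points of a generic configuration is the identity (the remark after Lemma \ref{lem:isom}) to match the two pieces along the common vertices. The only cosmetic difference is that you normalise $q$ to agree with $p$ on all of $G_1$ and conclude $q=p$, whereas the paper fixes a single common vertex and concludes $q=\iota\circ p$ for one discrete isometry $\iota$; both are fine.
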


\begin{proof}
Choose $u,v\in V(G_1)\cap V(G_2)$ and let
$(G,q)$ be an equivalent framework to $(G,p)$ on $\Y$. By applying a suitable isometry of $\Y$ to $q$ we may assume that $p(u)=q(u)$. Since $(G_1,p_1)$ is globally rigid on $\Y$, Lemma \ref{lem:isom} tells us that $q|_{G_1}=\iota \circ p_1$ for some  
isometry $\iota$ of $\Y$.
In particular $q(v)=(\iota\circ p)(v)$. Since $(G_2,p_2)$ is globally rigid on $\Y$,  Lemma \ref{lem:isom} implies that there is a unique equivalent realisation of $G_2$ on  $\Y$ which maps $u$ to $p(u)$ and $v$ to $(\iota\circ p)(v)$. Since both  $(G_2,q|_{G_2})$  and
$(G_2,\iota\circ p_2)$ have this property,  $q|_{G_2}=\iota \circ p_2$. Hence $q=\iota\circ p$ and $(G,p)$ is congruent to $(G,q)$.
\end{proof}

\begin{proof}[{\bf Proof of Theorem  \ref{thm:full}}]
Necessity follows from \cite{JMN}. We prove sufficiency by induction on $|E|$. Since $G$ is 2-connected and redundantly rigid, $\M_{2,2}^*(G)$ is connected by Lemma \ref{lem:mcon}, so has an ear decomposition by Lemma \ref{lem:ear}. Let $H_1,H_2,\ldots,H_t$ be the $\M_{2,2}^*$-circuits induced by an ear decomposition of $\M_{2,2}^*(G)$ and $p_i=p|_{H_i}$. Then $(H_i,p_i)$ is globally rigid on $\Y$ for all $1\leq i\leq t$ by Theorem \ref{thm:globrigid}. Since $|V(H_i)|\geq 5$ and $|(\cup_{j=1}^i V(H_j))\cap V(H_{i+1})|\geq 2$ for all $1\leq i<t$, we may deduce that $(G,p)$ is globally rigid on $\Y$ by repeated applications of Lemma \ref{lem:glue}.
\end{proof}

Theorem \ref{thm:full} implies that global rigidity on the cylinder is a generic property. That is, if $G$ is a graph and $p$ is generic on $\Y$, then $(G,p)$ is globally rigid on $\Y$ if and only if $(G,q)$ is globally rigid on $\Y$ for all generic $q$.
It also gives a polynomial time algorithm for checking generic global rigidity on the cylinder since 2-connectivity \cite{Tar} and redundant rigidity \cite{B&J,L&S} can both be checked efficiently.

\section{Cylindrical frameworks with one free vertex}
\label{sec:vfree}

In this section  we characterise rigidity for a  generic framework on the
cylinder when one designated vertex is allowed to move off the
cylinder. This is used to prove Lemma \ref{lem:expand}.

Let $(G,p)$ be a generic realisation of an $\M_{2,2}^*$-circuit on  $\Y$,  and $(\omega,\lambda)$ be a non-zero stress for
$(G,p)$. If $\lambda_i=0$ for some vertex $v_i$ then
$(\omega,\lambda|_{V-v_i})$ would be in the cokernel of the matrix
$R_{v_i}(G,p)$ obtained from $R_{\CYL}(G,p)$ by deleting the row
indexed by $v_i$. Hence the rows of $R_{v_i}(G,p)$ would be
dependent. Thus it will suffice to show that this cannot occur i.e.
$\rank R_{v}(G,p)=3|V|-2$ for all $v\in V$.

We can view $R_{v}(G,p)$ as a rigidity matrix for the framework
$(G,p)$ when $v$ is free to move arbitrarily in $\bR^3$ and all
other vertices are constrained to stay on $\Y$. We will refer to
such (infinitesimal) motions as {\em $v$-free (infinitesimal)
motions} and say that $(G,p)$ is {\em $v$-free (infinitesimally)
rigid} if its only {$v$-free (infinitesimal) motions} are induced by
the continuous isometries of $\Y$. Thus $(G,p)$ is $v$-free infinitesimally rigid if
and only if $\rank R_{v}(G,p)=3|V|-2$  and Lemma
\ref{lem:expand} will follow immediately from our next result.

\begin{thm}\label{thm:vfree}
Let $(G,p)$ be a generic realisation of an $\M_{2,2}^*$-circuit on $\Y$ and $v\in
V$. Then $(G,p)$ is $v$-free infinitesimally rigid.
\end{thm}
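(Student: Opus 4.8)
The plan is to prove Theorem~\ref{thm:vfree} by induction on $|V|$, following the same recursive strategy used to establish Theorem~\ref{thm:unitstress}. First I would observe that $v$-free infinitesimal rigidity is a generic property: the rank of $R_v(G,p)$ is maximised at generic $p$, so it suffices to exhibit, for each $\M_{2,2}^*$-circuit $G$ and each choice of $v$, one realisation $(G,p)$ on $\Y$ that is $v$-free infinitesimally rigid. (Alternatively one argues directly that any generic realisation works, using that $G$ has exactly $2n-1$ edges so $R_v(G,p)$ is a $(3n-1)\times 3n$ matrix and we only need its rows to be independent.) For the base cases $G\in\{K_5-e,H_1,H_2\}$ one checks $v$-free rigidity at each vertex by a finite computation, or by noting that deleting the row indexed by $v$ from $R_\CYL$ and replacing the two cylinder-constraint rows that $v$ would otherwise share is equivalent to coning or to adding generic pinning data.

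Next I would set up the inductive step via Theorem~\ref{thm:recurse} (or Theorem~\ref{thm:strongrecurse}): $G$ is obtained from a smaller $\M_{2,2}^*$-circuit, or pair of circuits, by a $1$-extension, $K_4^-$-extension, generalised vertex split, or $i$-join. For the $1$-extension (and more generally vertex split / $K_4^-$-extension), the strategy is the usual one for vertex-addition arguments in rigidity theory: place the new vertex at a suitably generic point and show that any $v$-free flex of $(G,p)$, when restricted to the smaller graph, is a $v$-free flex there, hence trivial by induction; then the constraints at the new vertex (which has degree at least three, counting its new edges) force its velocity to be trivial as well. One has to be mildly careful when $v$ itself is the newly created vertex, or is an endpoint of the subdivided edge — in those cases the roles of the rows being deleted and added interact, and I would handle them by a separate direct argument (e.g. for $v$ the new degree-three vertex, $R_v(G,p)$ restricted to $G-v$ is just $R_\CYL(G',p')$ for a graph $G'$ obtained from $G-e$ by a $0$-extension, which is rigid). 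For the join operations one splits the flex across the two sides, using that each $G_i$ is an $\M_{2,2}^*$-circuit on at most $n-1$ vertices and hence $v$-free infinitesimally rigid at the relevant vertex by induction, and then checks that the identifications along the join glue the two trivial flexes into a trivial flex of $G$.

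The main obstacle I expect is the bookkeeping around \emph{which} vertex $v$ is deleted relative to the recursive operation. Unlike the stress arguments in Section~\ref{sec:vsplit}, here the distinguished vertex $v$ is arbitrary and need not be compatible with the reduction producing the smaller circuit; so one cannot simply quote ``the smaller graph is $v$-free rigid'' without knowing $v$ survives into it. The fix is to choose, for the given $G$, a reduction whose affected vertices can be taken disjoint from (or only mildly overlapping with) $\{v\}$, which is possible because Theorem~\ref{thm:nearly3con} and the case analysis in Theorem~\ref{strongadmiss} actually furnish \emph{two} admissible reductions; at least one of them leaves $v$ untouched, or moves it to a vertex of the smaller circuit in a controlled way. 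The remaining work is then routine: verify that placing the new vertices generically keeps $R_v$ of maximal rank, which follows by the standard ``a non-trivial dependence would persist under specialisation'' argument already used in the proof of Lemma~\ref{lem:generic}.
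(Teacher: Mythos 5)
Your overall shape (induction on $|V|$, base cases by computation, reduction via admissible $1$-reductions, and a separation/join analysis otherwise) matches the paper's proof, and you correctly flag the central obstacle: the distinguished vertex $v$ need not be compatible with the reduction. However, your proposed fix for that obstacle does not work, and this is a genuine gap. When $G$ has no admissible nodes at all, Theorem~\ref{thm:nearly3con} forces a nontrivial $2$-vertex- or $3$-edge-separation, and one must decompose $G$ as a join $G_1*_iG_2$; here $v$ can unavoidably be one of the two vertices of attachment $\{x,y\}$ of the $2$-separation. There is no second reduction avoiding $v$ to fall back on (Theorem~\ref{strongadmiss} guarantees only one reduction, and Theorem~\ref{thm:nearly3con} is unavailable precisely because $G$ is not ``nearly $3$-connected'' in this case). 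In that situation the gluing of flexes is not routine: the restriction of a $v$-free flex to each side of the separation is \emph{not} trivial, because each side admits a one- or two-dimensional space of $v$-free flexes fixing the other attachment vertex, with nonzero velocity at $v$. One must show that the velocity $m_1(v)$ induced from the atom side can be forced to disagree with the velocity $m_2(v)$ induced from the other side. The paper does this by varying the positions of the internal vertices of the atom $F_1$ and proving, via explicit coordinate computations (e.g.\ evaluating the resulting velocity vectors or normal vectors at three choices of a free coordinate and checking linear independence), that the possible directions of $m_1(v)$ span $\bR^3$. Your proposal dismisses exactly this step as ``routine'' specialisation, but it is the technical core of the proof and does not follow from a generic-rank argument, since the two sides of the separation only share the two points $p(x),p(y)$ and their flex spaces must be compared as the configuration varies.

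A secondary, smaller issue: in the easy direction you can indeed handle $v$ being the new degree-three vertex directly (as the paper does, via rigidity of $G-v$), and Lemma~\ref{lem:vfree}(b) covers $1$-extension when $v$ survives; but the join case also needs Lemma~\ref{lem:vfree}(c),(d)-type statements (contracting a rigid block, or unioning a $v$-free rigid piece with a rigid piece), which your outline gestures at but does not supply. These lemmas are straightforward, so the real missing content is the coincidence analysis at the attachment vertices described above.
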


Note that  `$v$-free infinitesimal rigidity'  is a generic property
on $\Y$ since it is characterised by the rank of $R_{v}(G,p)$. We will say that {\em $G$ is $v$-free rigid on $\Y$} if
some, or equivalently every, generic realisation of $G$ on $\Y$ is
$v$-free infinitesimally rigid. Given graphs $H$ and $G$, we say
that $H$ is a {\it $0$-extension} of $G$ if $H$ can be obtained from $G$
by adding a vertex of degree two. We will need the following result
in the proof of Theorem \ref{thm:vfree}.

\begin{lem}\label{lem:vfree}
Let $G=(V,E)$ be
a graph and $v\in V$.\\
(a) Suppose that $G$ is a $0$-extension of a graph $H$ obtained by adding a new vertex $v_0$ and new edges $v_0v_1$ and $v_0v_2$, $v\in V(H)$
and $H$ is $v$-free rigid on $\Y$. Then $G$ is $v$-free rigid on
$\Y$. More precisely, if $(G,p)$ is any framework on $\Y$ such that 
$(H,p|_H)$ is $v$-free infinitesimal rigid and $p(v_0),p(v_1),p(v_2)$ are not collinear, then $(G,p)$ is $v$-free infinitesimal rigid.
\\
(b) Suppose that $G$ is a $1$-extension of a  graph $H$, $v\in V(H)$
and $H$ is $v$-free rigid on $\Y$. Then $G$ is
$v$-free rigid on $\Y$.\\
(c) Suppose that $G=H_1\cup H_2$, $v\in V(H_1)$, $H_1$ is $v$-free
rigid on $\Y$, $H_2$ is rigid on $\Y$ and $H_1\cap H_2 \neq
\emptyset$. Then $G$ is $v$-free rigid on $\Y$.\\
(d) Suppose that $H$ is a subgraph of $G-v$ with at least four
vertices, $H$ is rigid on $\Y$, and $G/H$
is $v$-free rigid on $\Y$. Then $G$ is $v$-free rigid on $\Y$.\\

\end{lem}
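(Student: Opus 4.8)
The four statements are all of the same flavour: a $v$-free rigid ``core'' survives under a graph operation, so the strategy throughout is to work at the level of infinitesimal flexes of the matrix $R_v(G,p)$, choosing $(G,p)$ generic on $\Y$ (permissible since $v$-free infinitesimal rigidity is a generic property, as noted before the lemma). In each part I would fix a $v$-free infinitesimal flex $s$ of $(G,p)$ and show it is an infinitesimal isometry of $\Y$ by restricting $s$ to the ``core'' subgraph, invoking its $v$-free (or ordinary) rigidity, and then checking that the extra vertices/edges force nothing new.

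For (a): if $G$ is a $0$-extension of $H$ obtained by adding $w\notin V(H)$ of degree two with neighbours $a,b$, then the restriction $s|_{V(H)}$ is a $v$-free flex of $(H,p|_H)$, hence (by genericity of $p$ and $v$-free rigidity of $H$) an infinitesimal isometry of $\Y$. After subtracting this isometry we may assume $s$ vanishes on $V(H)$; then the two edge equations at $w$ say $(p(w)-p(a))\cdot s(w)=0=(p(w)-p(b))\cdot s(w)$ together with the surface constraint $\bar p(w)\cdot s(w)=0$, and since $p$ is generic these three vectors span $\mathbb R^3$, forcing $s(w)=0$. For (b), the $1$-extension: $G$ is obtained from $H$ by deleting an edge $xy$ and adding $w$ joined to $x,y,z$. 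The standard trick is to note that $(p(w)-p(x))\cdot s(w-x) =0$ and $(p(w)-p(y))\cdot s(w-y)=0$, and that $w,x,y$ generically lie on a common line only trivially; one shows $s|_{V(H)}$ extends to (or can be modified to) a flex of $H$ that respects the removed edge $xy$ — concretely, use the two equations at $w$ together with the surface equation to solve for $s(w)$, deduce the edge equation for $xy$ holds for $s|_{V(H)}$, hence $s|_{V(H)}$ is a $v$-free flex of $H$ and thus an isometry, and finally that $s(w)$ is then forced. This is the classical ``$1$-extension preserves rigidity'' argument adapted to the cylinder and to the fixed-$v$ setting, and it goes through because $p$ is generic.

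For (c), the union: $s|_{V(H_1)}$ is a $v$-free flex of $H_1$, hence an isometry $\iota$ of $\Y$; subtracting $\iota$ we get $s\equiv 0$ on $V(H_1)$, in particular on the nonempty intersection $V(H_1)\cap V(H_2)$, so $s|_{V(H_2)}$ is an (ordinary) flex of the rigid framework $(H_2,p|_{H_2})$ that fixes at least one vertex — but an infinitesimal isometry of $\Y$ fixing one point is either trivial or the rotation about the $z$-axis; if $H_1\cap H_2$ contains a vertex this rotation component is pinned down by the fact that $s$ already vanishes there, giving $s\equiv 0$ on $V(H_2)$ too (one should be slightly careful if $|V(H_1\cap H_2)|=1$ and argue that the single fixed point together with $s=0$ on all of $V(H_1)$ kills the rotation as well, since the rotation acts nontrivially at every point off the $z$-axis and every point of $\Y$ is off the axis). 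For (d), the contraction: $G/H$ being $v$-free rigid and $H$ rigid is the inverse of the previous glueing picture — take $s$ a $v$-free flex of $(G,p)$; since $H$ is rigid, $s|_{V(H)}$ is an isometry of $\Y$, and after subtracting it we have $s\equiv 0$ on $V(H)$, so $s$ descends to a $v$-free flex of $G/H$ (the contracted vertex inherits the zero flex, and every edge of $G/H$ corresponds to an edge of $G$ with the same equation), which is therefore an isometry; since $v\in V(G)-V(H)$ is unaffected by the contraction, this pins down $s$ on all of $V(G)-V(H)$, and we already know $s\equiv0$ on $V(H)$, so $s$ is an isometry of $(G,p)$. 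The hypothesis $|V(H)|\ge 4$ (so $H$ spans enough of $\Y$ to make ``isometry fixing $V(H)$ is trivial'' hold) and $|V(G/H)|\ge 4$ feed the applications of Lemma \ref{lem:isom}-type rigidity facts on the cylinder.

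The main obstacle I anticipate is the bookkeeping in parts (c) and (d) around the residual rotation about the $z$-axis: ``rigid on $\Y$'' only controls flexes up to the continuous isometry group of $\Y$, which is two-dimensional (a translation and a rotation), so after subtracting the isometry coming from the $v$-free core one must verify that $s$ vanishing on the overlapping vertex set really does kill this leftover rotation. This is where genericity — every vertex of $\Y$ lies strictly off the $z$-axis, so the rotation acts with nonzero velocity at each of them — does the work, but it needs to be stated carefully rather than waved through, particularly when the overlap is a single vertex.
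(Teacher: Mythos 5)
Parts (a) and (c) of your plan are correct and essentially the paper's arguments (in (c) the paper normalises the flex to vanish at a vertex $v_0\in V(H_1)-v$, and, as you anticipate, the residual rotation is killed because every point of $\Y$ has nonzero rotational velocity). The gaps are in (b) and (d). For (b), the step ``use the two equations at $w$ together with the surface equation to solve for $s(w)$, deduce the edge equation for $xy$ holds for $s|_{V(H)}$'' does not work at a generic position: the four constraints at $w$ (three edges plus the surface row) impose exactly one linear compatibility condition on $(s(x),s(y),s(z))$, and that condition coincides with the edge equation for $xy$ only when $p(w)$ is collinear with $p(x),p(y)$ and $\bar p(w)$ lies in the right span --- a special position that is generically unavailable on the cylinder. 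So $s|_{V(H)}$ is a flex of $H-xy$ satisfying some \emph{other} linear condition, and you cannot conclude it is a flex of $H$. The paper gets around this with a limiting argument: it takes a sequence $(G,q^k)$ with $q^k(v_0)\to p(v_2)$ along a tangent direction $a$ chosen orthogonal to a vector $b$ that is itself orthogonal to $p(v_2)-p(v_1)$, extracts a convergent subsequence of unit-norm flexes by Bolzano--Weierstrass, and shows the limit flex of the coincident-point framework satisfies the $xy$ edge equation, contradicting $v$-free rigidity of $H$ plus edge additions.

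For (d), the claim that $s$ ``descends to a $v$-free flex of $G/H$ \dots every edge of $G/H$ corresponds to an edge of $G$ with the same equation'' is false for the edges joining $V(G)-V(H)$ to $V(H)$: after contraction their $H$-endpoint is replaced by the single contracted vertex $v_r^*$, so the equation $(p(v_i)-p(v_j))\cdot s(v_i)=0$ becomes $(p(v_i)-p^*(v_r^*))\cdot s(v_i)=0$ with a different coefficient vector, and there is no realisation of $G/H$ realising all the old directions simultaneously. The paper instead argues at the matrix level: writing $R_v(G,p)$ in block form, a nontrivial flex vanishing on $V(H)$ shows the submatrix $M_2(p)$ (columns indexed by $V(G)-V(H)$) has deficient rank; since $p$ is generic, the rank is still deficient after specialising all vertices of $H$ to the single point $p(v_r)$, and the specialised $M_2(p')$ is precisely the relevant block of $R_v(G/H,p^*)$, yielding a nontrivial flex of $G/H$ fixing $v_r^*$ and hence a contradiction. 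You would need this specialisation step (or an equivalent) to make (d) go through.
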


\begin{proof}
(a) This follows since the definition of $R_v(G,p)$ implies that $$\rank R_v(G,p)=\rank
R_v(H,p|_H)+3=3|V|-2.$$

(b) Let $V=\{v_0,v_1,\dots,v_n\}$, let $(G,p)$ be generic on $\Y$
and suppose $G$ is formed from $H$ by deleting the edge $v_1v_2$ and
then adding the vertex $v_0$ and edges $v_0v_i$ for $1\leq i\leq 3$.
By symmetry between $v_1$ and $v_2$, we may assume that $v_2\neq v$.

Suppose that $(G,p)$ is not $v$-free infinitesimally rigid on $\Y$. Then it follows that no framework $(G,q)$ on $\Y$ is $v$-free infinitesimally rigid. Let $\P$ be the tangent plane to $\Y$ at $p(v_2)$ and let $a$ and $b$ be orthogonal unit vectors in $\P$ such that $b$ is orthogonal to $p(v_2)-p(v_1)$. Consider a sequence of frameworks $(G, q^k)$ on $\Y$, in which $q^k|_{V-v_0}=p|_{V-v_0}$  and $q^k(v_0)$ tends to $p(v_2)$ in the direction of $a$ as $k\to \infty$. More precisely, the normalised vector $(p(v_2)-q^k(v_0))/\|p(v_2)-q^k(v_0)\|$ converges to $a$, as $k\to \infty$. Let $q=\lim_{k\to \infty} q^k$.

Since $(G, q^k)$ is not $v$-free infinitesimally rigid, $(G, q^k)$ has a unit norm infinitesimal motion $m^k$ which has $m^k(v_2)=0$ and hence is not an infinitesimal isometry of the cylinder. By the Bolzano-Weierstrass theorem there is a subsequence of the sequence $m^k$ which converges to a vector, $m=(m_0, m_1,\dots,m_n)$ say. Relabeling if necessary, we may assume this holds for the original sequence $m^k$.  Then $m$ is a $v$-free infinitesimal motion of the framework  $(G,q)$, for which $q(v_0)=p(v_2)$ and $q(v_i)=p(v_i)$ for all $1\leq i\leq n$.

We claim that $m$  is a $v$-free infinitesimal motion of $(G+v_1v_2,q)$.  To see this, it will suffice to show that  $m_2 -m_1$  is orthogonal to $p(v_2)-p(v_1)$.  Since $m$ is a $v$-free infinitesimal motion of  $(G,q)$,  $m_0,m_2\in \P$ and, since $v_2v_0\in E(G)$ and    $(p(v_2)-q^k(v_0))/\|p(v_2)-q^k(v_0)\|$ converges to $a$,  $m_2-m_0$ is orthogonal to $a$. This tells us that $m_2-m_0$ is a scalar multiple of $b$ and hence that $m_2-m_0$ is orthogonal to $p(v_2)-p(v_1)$.
The fact that $m$ is a $v$-free infinitesimal motion of  $(G,q)$ and $v_1v_0\in E(G)$, also implies that $m_1-m_0$ is orthogonal to $q(v_0)-q(v_1)=p(v_2)-p(v_1)$. Taking differences, we may deduce that $m_2 -m_1=(m_2-m_0)-(m_1-m_0)$  is orthogonal to $p(v_2)-p(v_1)$, as desired.

Since the vectors $m^k$ have unit norm and satisfy 
$m^k(v_2)=0$, the vector $m$ will also have unit norm and satisfy 
$m(v_2)=0$. Hence $(G+v_1v_2,q)$ is not
$v$-free infinitesimally rigid. This contradicts (a) since
$(H,q|_H)$ is $v$-free infinitesimally rigid, $G+v_1v_2$ can be
obtained from $H$ by adding the edges $v_0v_1,v_0v_3$ (and
$v_0v_2$), and $q(v_0)=p(v_2), q(v_1)=p(v_1), q(v_3)=p(v_3)$
are not collinear.

(c) Let $(G,p)$ be a generic realisation of $G$ on $\Y$ and $m$ be
an infinitesimal motion  of $(G,p)$.
By applying an infinitesimal isometry of $\Y$ to $(G,p)$ we may
assume that $m(v_0)=0$ for some $v_0\in V(H_1-v)$. The fact that
$H_1$ is $v$-free rigid on $\Y$ now implies that the restriction of
$m$ to $(H_1,p|_{H_1})$ is zero. In particular, $m(u)=0$ for all
$u\in V(H_1)\cap V(H_2)$.  Since $H_2$ is rigid on $\Y$ and $H_1\cap
H_2\neq \emptyset$,  the restriction of $m$ to $(H_2,p|_{H_2})$ is
zero. Hence $m=0$ and $G$ is $v$-free rigid on $\Y$.

(d) Let $V=\{v_1,v_2,\ldots,v_n\}$, $V(H)=\{v_1,v_2,\ldots,v_r\}$
and $(G,p)$ be a generic framework on $\Y$.
By reordering its rows, we can write $R_v(G,p)$ in the form
\[\begin{pmatrix} R_{\CYL}(H,p|_H) & 0 \\ M_1(p) & M_2(p) \end{pmatrix} \]
where $M_2(p)$ is a matrix with $3(n-r)$ columns.

Suppose, for a contradiction, that $G$ is not $v$-free rigid on
$\Y$. Then there exists a vector $m\in \ker R_v(G,p)$ which is not
an infinitesimal isometry of $\Y$. Since $(H,p|_H)$ is rigid on $\Y$
we may suppose that $m=(0,\dots,0,m_{r+1},\dots,m_n)$. Consider the
realisation $(G,p')$ where $p'=(p(v_r), p(v_r), \dots,p(v_r),
p(v_{r+1}), \dots, p(v_n))$. Let $v_r^*$ be the vertex of $G/H$
which corresponds to $H$ and define the realisation $(G/H,p^*)$ by
setting $p^*(v_r^*)=p(v_r)$ and $p^*(v_i)=p(v_i)$ for all $r<i\leq
n$.  Since $p^*$ is generic, $(G/H,p^*)$ is $v$-free rigid on $\Y$
by assumption.

Since the nonzero vector $(m_{r+1},\dots,m_n)\in \ker M_2(p)$,
$\rank M_2(p) < 3(n-r)$. Since $p$ is generic, we also have $\rank
M_2(p') \leq\rank M_2(p)< 3(n-r)$ and hence there exists a nonzero vector $m'\in
\ker M_2(p')$. Therefore we have
\[ \begin{pmatrix} R_v(G/H,p^*)\end{pmatrix} \begin{pmatrix}0 \\ m' \end{pmatrix}= \begin{pmatrix} \star & M_2(p')\\ \bar p(v_r)& 0  \end{pmatrix}\begin{pmatrix} 0 \\ m' \end{pmatrix}=0. \]
Thus  $(0,m')$ is a nonzero infinitesimal motion of $(G/H,p^*)$
which fixes $v_r^*$. This contradicts the $v$-free rigidity of
$(G/H,p^*)$.
\end{proof}

\begin{proof}[\bf Proof of Theorem \ref{thm:vfree} (and Lemma \ref{lem:expand})]
We use a similar inductive proof technique to that of Theorem
\ref{strongadmiss}. The base graphs in our induction are the graphs
$K_5^-,H_1,H_2$. We give an infinitesimally rigid realisation
$(G,p)$ of $G$ on $\Y$ with a {\em nowhere zero} stress for each $G\in
\{K_5^-,H_1,H_2\}$ in the Appendix. This implies that the rank of $
R(G,p)$ is $3|V|-2$ and that its rank will remain unchanged if we delete any of its rows. In particular, we have $\rank
R_v(G,p)=3|V|-2$  and hence $G$ is
$v$-free rigid on $\Y$. Thus we may assume that $G\not\in
\{K_5^-,H_1,H_2\}$.

Suppose that $G$ has an admissible node $x$. If $x\neq v$ then the
graph obtained from $G$ by performing an admissible $1$-reduction at
$x$ is $v$-free rigid on $\Y$ by induction and hence $G$ is $v$-free
rigid on $\Y$ by Lemma \ref{lem:vfree}(b). On the other hand, if
$x=v$ then $v$ has degree three. Since $G$ is an $\M_{2,2}^*$-circuit, $G-v$ is
rigid on $\Y$. This implies that $G$ is $v$-free rigid on $\Y$ since $v$
has three neighbours in $G-v$.

Hence we may assume that $G$ has no admissible nodes. Theorem
\ref{thm:nearly3con} now implies that $G$ has a nontrivial
2-separation or 3-edge-separation.  It follows that $G$ has at least
two distinct atoms. Thus we may choose a  nontrivial separation
$(F_1,F_2)$ of $G$ such that $F_1$ is an atom of $G$ and $v\in
V(F_2)$.  Consider the following cases.\\[1mm]
{\bf Case 1}: $(F_1,F_2)$ is a nontrivial 3-edge-separation.

Let $G_i$ be obtained from $G$ by contracting $F_{3-i}$ to a single
vertex $z_i$ for each $i\in \{1,2\}$. Then $G=G_1*_3 G_2$ so
$G_1,G_2$ are $\M_{2,2}^*$-circuits on $\Y$ by Theorem
\ref{thm:circuit_dec}. Since $G_1$ is an $\M_{2,2}^*$-circuit on
$\Y$ and $d(z_1)=3$, $G_1-z_1$ is rigid on $\Y$, and since $G_2$ is an
$\M_{2,2}^*$-circuit, $G_2$ is $v$-free rigid on $\Y$ by
induction. Lemma \ref{lem:vfree}(d) now implies that $G$ is $v$-free
rigid on $\Y$.

\medskip
\noindent {\bf Case 2}: $(F_1,F_2)$ is a nontrivial
2-vertex-separation, $V(F_1)\cap V(F_2)=\{x,y\}$ and $E(F_1)\cap
E(F_2)=\{xy\}$.

Let $G_i$ be obtained from $F_i$ by adding two new vertices
$\{w,z\}$ and five new edges $\{wx,wy,wz,xz,yz\}$. Then $G=G_1*_2
G_2$ so $G_1$ and $G_2$ are $\M_{2,2}^*$-circuits by Theorem
\ref{thm:circuit_dec}. Since $F_1$ is an atom, $G_1$ has no
nontrivial 2-vertex-separation or 3-edge-separation. If $G_1\not\in
\{K_5^-,H_1,H_2\}$, then $G_1$ would have an admissible node $u\neq v$ by
Theorem \ref{thm:nearly3con}. It is straightforward to check that $u$ would be an admissible node in $G$. Since $G$
has no admissible nodes, we may deduce that  $G_1\in
\{K_5^-,H_1,H_2\}$. We have $G_1\neq K_5^-$ since $G_1$ is
not 3-connected and   $G_1 \neq H_1$ since the 2-separation
$(F_1,F_2)$ is nontrivial. Hence $G_1 = H_2$ and $G$ is as shown in
Figure \ref{fig:case2_app}.

Put $\{b,c\}= \{x,y\}$ with $b\neq v$.
Let $(G_2,p_2)$ be a generic realisation of $G_2$ on
$\Y$.
Since
$G_2$ is an $\M_{2,2}^*$-circuit, $G_2$ is  $v$-free rigid by
induction. Since $R_v(G_2,p)$ has $3|V(G_2)|-2$ rows, $(G_2-wz,p_2)$ will have a unique nonzero $v$-free
infinitesimal motion which fixes $b$ (up to scalar multiplication).
Since $w,z$ have degree two in $G_2-wz$, $(F_2,p_2|_{F_2})$ also has a
unique nonzero $v$-free infinitesimal motion $m_2$ which fixes $b$.
The
fact that $G_2$ is $v$-free rigid implies that $m_2(c)\neq 0$.
Since $G_1$ is an $\M_{2,2}^*$-circuit and $F_1=G_1-\{w,z\}$, we can choose a realisation $(F_1,p_1)$ which is
infinitesimally rigid on $\Y$ and has $p_1(x)=p_2(x)$ and $p_1(y)=p_2(y)$.
Let $(G,p)$ be the realisation of $G$ on $\Y$ with $p|_{F_i}=p_i$. 
We will show that $(G,p)$ is infinitesimally $v$-free rigid.  
We assume, for a contradiction, that $(G,p)$ has a nonzero $v$-free infinitesimal motion $m$  which keeps $b$ fixed.

Suppose that $v\not\in\{x,y\}$. Then, since $(F_1,p_1)$ is
infinitesimally rigid on $\Y$, $m|_{F_1}=0$. This contradicts the fact that we must have $m(c)\neq 0$, since $m|_{F_2}$ will be a nonzero scalar multiple of the motion $m_2$ defined in the previous paragraph.
Hence $v=c$.

The fact that $(G_1,p_1)$ is
$v$-free rigid implies that $(F_1,p_1)$ has a unique nonzero
$v$-free infinitesimal motion $m_1$ which keeps $b$ fixed (up to
scalar multiplication). Thus $m(v)=\alpha m_1(v)=\beta m_2(v)$
for some $\alpha,\beta\in \bR$. We will show that, as we
vary the positions of the vertices of $F_1-\{x,y\}$ in $(G,p)$, the direction of $m_1(v)$ changes. Since the direction of $m_2(v)$ remains fixed this will imply that we can choose $p_1$ such that $(G,p)$ is infinitesimally $v$-free rigid.

When $v=y$ this follows since the $K_4$ subgraph of $F_1$ attached to $x$ is rigid on $\Y$ and hence $m_1|_{F_1-y}=0$. Thus $m_1(y)$ is normal to the plane through $p_1(x),p_1(v_4),p_1(y)$. Clearly the direction of $m_1(y)$ will change as we vary $p_1(v_4)$. Hence we may suppose that
$v=x$. In this case we first apply an isometry of $\Y$ to $(G,p)$ so that $p(y)=(0,1,0)$ and then put $p(x)=(x_1,y_1,z_1)$, $p(v_2)=(1,0,-1)$, $p(v_3)=(-1,0,-1/3)$ and $p(v_4)=(1/\sqrt{2},1/\sqrt{2},z_4)$.  We may scale $m_1$ so that $m_1(x)=(1,u_5,u_6)$ and then solve $R_v(F_1,p_1)m_1=0$ to deduce that $ u_5=-\frac{(1+\sqrt{2})(\sqrt{2}x_1y_1z_4^2-x_1z_1z_4+z_1^2-z_4z_1)}{z_4(2y_1z_4+z_1-2z_4+\sqrt{2}y_1z_4-\sqrt{2}z_4)y_1}. $
Hence the direction of $m_1(x)$ will change as we vary $z_4$.

\begin{center}
\begin{figure}
\begin{tikzpicture}[scale=0.5]

%

 \draw[black,thick]
(0,.5) -- (0,2.5);

\draw (1,1.5) circle (57pt);

\filldraw (0,2.5) circle (3pt)node[anchor=south]{$x$};
\filldraw (0,.5) circle (3pt)node[anchor=north]{$y$};
\filldraw (-3,3) circle (3pt)node[anchor=south]{$v_2$};
\filldraw (-4,0) circle (3pt)node[anchor=north]{$v_3$};
\filldraw (-2,0) circle (3pt)node[anchor=north]{$v_4$};

\draw[black,thick] (0,2.5) -- (0,.5);

 \draw[black,thick]
(0,.5) -- (-2,0) -- (-4,0) -- (-3,3) -- (0,2.5) -- (-4,0);

\draw[black,thick] (-3,3) -- (-2,0) -- (0,2.5);


  \node [rectangle, draw=white, fill=white] (b) at (-1,-2) {$G$};

\filldraw (-5,1) circle (0pt)node[anchor=south]{$F_1$};

\filldraw (1.5,1) circle (0pt)node[anchor=south]{$F_2$};

\end{tikzpicture}
\caption{The graph $G$ in the subcase $G_1=H_2$ of Case 2.} \label{fig:case2_app}
\end{figure}
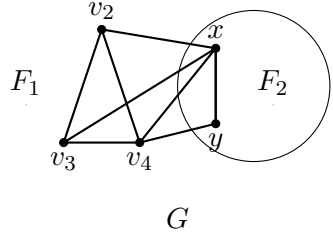
\end{center}

\medskip
\noindent {\bf Case 3}: $(F_1,F_2)$ is a nontrivial
2-vertex-separation, $V(F_1)\cap V(F_2)=\{x,y\}$ and $xy\not\in E$.

We have $$|E(F_1)|+|E(F_2)|=|E(G)|=2|V(G)|-1=2|V(F_1)|+2|V(F_2)|-5$$
and $|E(F_i)|\leq 2|V(F_i)|-2$ for each $i=1,2$ so $2|V(F_i)|-3\leq
|E(F_i)|\leq 2|V(F_i)|-2$. Consider the following subcases.

\smallskip \noindent {\bf Subcase 3.1}: $|E(F_1)|= 2|V(F_1)|-3$.
Let $G_1$ be obtained from $F_1$ by adding two new vertices
$\{w,z\}$ and six new edges $\{wx,wy,wz,xy,xz,yz\}$, and
$G_2=F_2+xy$. Then $G=G_1*_1 G_2$ so $G_1$ and $G_2$ are $\M_{2,2}^*$-circuits by
Theorem \ref{thm:circuit_dec}. Since $F_1$ is an atom, $G_1$ has no
nontrivial 2-vertex-separation or 3-edge-separation. If $G_1\not\in
\{K_5^-,H_1,H_2\}$, then $G_1$ would have an admissible node $u\neq v$ by
Theorem \ref{thm:nearly3con}. It is straightforward to check that $u$ would be an admissible node in $G$. Since $G$ has no admissible nodes we may deduce that
$G_1\in
\{K_5^-,H_1,H_2\}$.
We have $G_1\neq K_5^-$ since $G_1$ is
not 3-connected. Hence $G_1 \in  \{H_1,H_2\}$.

Suppose $G_1=H_1$. Then $F_1=K_4^-$ and $G$ is a $K_4^-$-extension
of $G_2$. Let $(G_2,p_2)$ be a generic realisation of $G_2$ on $\Y$.
Then $(G_2,p_2)$ is $v$-free infinitesimally rigid by induction. Let
$(G+xy,p)$ be a realisation of $G+xy$ on $\Y$ such that
$p|_{F_2}=p_2$ and such that the vertices of $p|_{F_1}$ are coplanar
and are in general position on this plane. Then $(G+xy,p)$ is
$v$-free infinitesimally rigid since it can be obtained from
$(G_2,p_2)$ by two $0$-extensions  and an edge addition. In addition
the rows of $R_v(G+xy)$ indexed by $E(F_1)+xy$ are minimally
linearly dependent since the vertices of $p|_{F_1}$ are coplanar and
are in general position on this plane. Hence we may delete the edge
$xy$ from $(G+xy,p)$ without destroying its $v$-free infinitesimal
rigidity. Thus we may assume that $G_1\neq H_1$.

Suppose $G_1=H_2$. Then $G$ is as shown in Figure
\ref{fig:K_5^-_app}(i). If $v\neq x$ then we may apply Lemma
\ref{lem:vfree}(d)  with $H=F_1-y$ to deduce that $G$ is $v$-free
rigid on $\Y$. Hence $v=x$. Let $(G_2,p_2)$ be a generic realisation
of $G_2$ on $\Y$.  Since $(G_2,p|_{G_2})$ is $v$-free rigid by
induction, $(F_2,p|_{F_2})$ has a unique nonzero infinitesimal
motion $m_2$ which keeps $y$ fixed (up to scalar multiplication),
and $ m_2(x)\neq 0$.

Since $G_1$ is $v$-free rigid, we can choose a realisation $(F_1,p_1)$ such that
$p_1$ agrees with $p_2$ at $x,y$ and such that the space, $Z_1$, of infinitesimal motions of $(F_1,p_1)$ which keep $y$
fixed, is $2$-dimensional.
Let $(G,p)$ be the framework with $p|_{F_i}=p_i$.
We will show that we can choose $p_1$ such that the infinitesimal velocities $m_1(x)$ for $m_1\in Z_1$ span a 2-dimensional subspace $\P$ of $\bR^3$, and that, as we vary the positions of the
vertices of $F_1-\{x,y\}$, the normals to $\P$ span $\bR^3$. This will imply that we
can choose $p_1$ so that no nonzero infinitesimal velocity $m_1(x)$ for $m_1\in Z_1$,  has the same direction as $m_2(x)$, and hence that $(G,p)$ is infinitesimally rigid.

We again apply an isometry of $\Y$ to $(G,p)$ so that $p(y)=(0,1,0)$, and choose $p(x)$ and $p(v_i)$, $2\leq i\leq 4$, to be as defined
in the last paragraph of Case 2.
Then $(F_1,p_1)$ has two linearly independent infinitesimal motions $m_1,\tilde m_1$ with $m_1(y)=(0,0,0)=\tilde m_1(y)$, $m_1(x)=(1,0,-\frac{-\sqrt{2}x_1z_4+2x_1y_1z_4+\sqrt{2}z_1-\sqrt{2}z_4}{2z_4y_1(z_1-z_4)})$
and $\tilde m_1(x)=(0,1,-\frac{-\sqrt{2}+2y_1}{2(z_1-z_4)})$.
The normal vector to $\P$ is $n_1=m_1(x) \times \tilde m_1(x)=(\frac{-\sqrt{2}x_1z_4+2x_1y_1z_4+\sqrt{2}z_1-\sqrt{2}z_4}{2z_4y_1(z_1-z_4)} , \frac{-\sqrt{2}+2y_1}{2(z_1-z_4)},1)$. 
We can obtain three linearly independent normal vectors by evaluating
$n_1$ at each $z_4\in \{1,2,-1\}$. 
Hence the normals to $\P$ span $\bR^3$ as required.

\smallskip \noindent {\bf Subcase 3.2}: $|E(F_1)|= 2|V(F_1)|-2$.
Let $G_2$ be obtained from $F_2$ by adding two new vertices
$\{w,z\}$ and six new edges $\{wx,wy,wz,xy,xz,yz\}$, and
$G_1=F_1+xy$. Then $G=G_1*_1G_2$ so $G_1$ and $G_2$ are $\M_{2,2}^*$-circuits by
Theorem \ref{thm:circuit_dec}.
Let $(G_2,p_2)$ be a generic realisation of $G_2$ on $\Y$. Since $G_1$ is an $\M_{2,2}^*$-circuit and $F_1=G_1-xy$, we can choose a realisation $(G_1,p_1)$ such that
$(F_1,p_1)$ is infinitesimally rigid on $\Y$ and $p_1$ agrees with $p_2$ on $\{x,y\}$. Let $(G,p)$ be the framework with $p|_{F_i}=p_i$.

Suppose $v\not\in \{x,y\}$. Since $v\in V(F_2)$ we have $v\not\in
V(F_1)$. Let $m$ be an infinitesimal motion of $(G,p)$ which keeps
$y$ fixed. Since $(F_1,p_1)$ is infinitesimally rigid on $\Y$, $m|_{F_1}=0$ and, in particular, $m(x)=0$. It
now follows that $m|_{F_2}$ can be extended to an infinitesimal
motion $m_2$ of $(G_2,p_2)$  by
putting $m_2(w)=m_2(z)=0$. Since $(G_2,p_2)$ is infinitesimally $v$-free rigid by
induction and $m_2(y)=0$, we have $m_2=0$. Hence $m=0$ and $(G,p)$
is infinitesimally $v$-free rigid. Thus we may assume that $v\in \{x,y\}$.

Let $\{x,y\}=\{v,b\}$. Since $G_2$ is $v$-free rigid by induction, the space $Z_2$ of infinitesimal
motions of  $(F_2,p|_{F_2})$ which keep $b$ fixed
 is 2-dimensional. Similarly, since $G_1$ is
$v$-free rigid by induction, the space $Z_1$ of infinitesimal
motions of  $(F_1,p|_{F_1})$ which keep $b$ fixed
 is 1-dimensional. Our
strategy to show that $(G,p)$ is $v$-free infinitesimally rigid is to  adjust the positions of the vertices
of $F_1-\{x,y\}$ so that, for all nonzero $m_1\in Z_1$ and $m_2\in
Z_2$, $m_1(v)\neq m_2(v)$. We will accomplish this by showing that the vectors $m_1(v)$ span $\bR^3$ as $p_1|_{F_1-\{x,y\}}$ varies.

Suppose $v$ has degree three in $G_1$ and let $b,c,d$ be the
neighbours of $v$ in $G_1$. Since $G_1$ is an $\M_{2,2}^*$-circuit,
$G_1-v$ is rigid on $\Y$ and hence $m_1(u)=0$ for all $u\in
V(G_1)-v$ and all $m_1\in Z_1$. It follows that $m_1(v)$ is normal to the plane  through $p(v)$, $p(c)$ and
$p(d)$ for all $m_1\in Z_1$. This implies that we can choose $p(c)$
and $p(d)$ to ensure that $m_1(v)\neq m_2(v)$ for all $m_1\in Z_1$
with $m_1(v)\neq 0$ and all $m_2\in Z_2$. Hence we may assume that $v$ has degree at
least four in $G_1$.

Since $F_1$ is an atom, $G_1$ has no nontrivial 2-vertex-separation
or 3-edge-separation. If $G_1\not\in \{K_5^-,H_1,H_2\}$, then $G_1$
has two admissible nodes by Theorem \ref{thm:nearly3con}. Since $v$
has degree at least four in $G_1$, $G_1$ has at least one admissible
node which is distinct from $v,b$, and this node is an admissible
node of $G$. This contradicts the fact that $G$ has no admissible nodes and hence we may assume that $G_1\in \{K_5^-,H_1,H_2\}$.

Suppose $G_1=K_5^-$. Then $G$ has the structure of one of the graphs
shown in Figure \ref{fig:K_5^-_app}(ii) and (iii).
 If case (ii) occurs then we may assume by symmetry that $v=x$. On
 the other hand, if case (iii) occurs then we must also have $v=x$ since $v$ has degree
 at least four in $G_1$. In both cases we consider the realisation $(G,p)$ where $p|_{F_1}$ is as defined in the last paragraph of Case 2, and determine the unique vector $ m_1(v)=(1,u_5,u_6)$, for $m_1\in Z_1$. We then show that the vectors $m_1(v)$ span $\bR^3$ as the positions of the
vertices of $F_1-\{x,y\}$ vary. For simplicity in the expressions we also specify that $x_1=0,y_1=-1$ and $z_1=1$.

When case (ii) occurs we obtain $u_5=\frac{30z_4^2-5z_4\sqrt{2}-12z_4+3\sqrt{2}+2}{-z_4\sqrt{2}+6z_4^2-3z_4}$ and $u_6=\frac{-3z_4\sqrt{2}+18z_4^2-3z_4+6\sqrt{2}+4}{-z_4\sqrt{2}+6z_4^2-3z_4}$ and when case (iii) occurs we obtain
$u_5=\frac{-\sqrt{2}z_4+6z_4^2-6z_4+\sqrt{2}}{z_4(-\sqrt{2}+6z_4+2)}$ and $u_6=-\frac{2(3z_4-\sqrt{2}}{z_4(-\sqrt{2}+6z_4+2)}$.
In both cases, we can obtain three linearly independent infinitesimal velocities $m_1(v)$ by choosing $z_4\in\{1,-1,2\}$ in the above expressions for $(1,u_5,u_6)$.

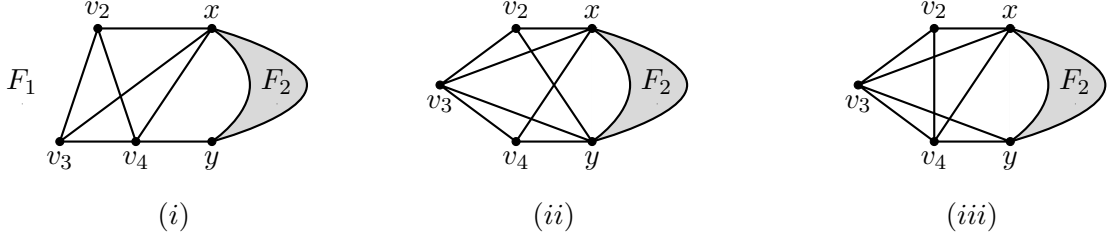
\begin{figure}
\begin{tikzpicture}[scale=0.5]

\draw[draw=gray!30!white,fill=gray!30!white] 
    plot[smooth, tension=1] coordinates{(-10,0) (-7.5,1.5) (-10,3)}  -- 
    plot[smooth, tension=1] coordinates{(-10,0) (-7.5,1.5) (-10,3)};

\draw[draw=white,fill=white] 
    plot[smooth, tension=1] coordinates{(-10,0) (-9,1.5) (-10,3)};

 \draw[white,thick]
(-10,0) -- (-10,3);

\filldraw (-10,3) circle (3pt)node[anchor=south]{$x$};
\filldraw (-10,0) circle (3pt)node[anchor=north]{$y$};
\filldraw (-13,3) circle (3pt)node[anchor=south]{$v_2$};
\filldraw (-14,0) circle (3pt)node[anchor=north]{$v_3$};
\filldraw (-12,0) circle (3pt)node[anchor=north]{$v_4$};

 \draw[black,thick]
(-10,0) -- (-12,0) -- (-14,0) -- (-13,3) -- (-10,3) -- (-14,0);

\draw[black,thick] (-13,3) -- (-12,0) -- (-10,3);

\draw[thick] plot[smooth, tension=1] coordinates{(-10,0) (-9,1.5)
(-10,3)}; \draw[thick] plot[smooth, tension=1] coordinates{(-10,0)
(-7.5,1.5) (-10,3)};

  \node [rectangle, draw=white, fill=white] (b) at (-11,-2) {$G$};

\filldraw (-8.3,1) circle (0pt)node[anchor=south]{$F_2$};
\filldraw(-15,1) circle (0pt)node[anchor=south]{$F_1$};

  \node [rectangle, draw=white, fill=white] (b) at (-11,-2) {$(i)$};


\draw[draw=gray!30!white,fill=gray!30!white] 
    plot[smooth, tension=1] coordinates{(0,0) (2.5,1.5) (0,3)}  -- 
    plot[smooth, tension=1] coordinates{(0,0) (2.5,1.5) (0,3)};

\draw[draw=white,fill=white] 
    plot[smooth, tension=1] coordinates{(0,0) (1,1.5) (0,3)};

 \draw[white,thick]
(0,0) -- (0,3);

\filldraw (0,3) circle (3pt)node[anchor=south]{$x$};
\filldraw (0,0) circle (3pt)node[anchor=north]{$y$};
\filldraw (-2,3) circle (3pt)node[anchor=south]{$v_2$};
\filldraw (-4,1.5) circle (3pt)node[anchor=north]{$v_3$};
\filldraw (-2,0) circle (3pt)node[anchor=north]{$v_4$};

 \draw[black,thick]
(0,0) -- (-2,0) -- (-4,1.5) -- (-2,3) -- (0,3) -- (-4,1.5) -- (0,0);

\draw[black,thick] (-2,3) -- (0,0);

\draw[black,thick] (-2,0) -- (0,3);

\draw[thick] plot[smooth, tension=1] coordinates{(0,0) (1,1.5)
(0,3)}; \draw[thick] plot[smooth, tension=1] coordinates{(0,0)
(2.5,1.5) (0,3)};

  \node [rectangle, draw=white, fill=white] (b) at (-1,-2) {$(ii)$};

\filldraw (1.7,1) circle (0pt)node[anchor=south]{$F_2$};


\draw[draw=gray!30!white,fill=gray!30!white] 
    plot[smooth, tension=1] coordinates{(11,0) (13.5,1.5) (11,3)}  -- 
    plot[smooth, tension=1] coordinates{(11,0) (13.5,1.5) (11,3)};

\draw[draw=white,fill=white] 
    plot[smooth, tension=1] coordinates{(11,0) (12,1.5) (11,3)};

 \draw[white,thick]
(11,0) -- (11,3);

\filldraw (11,3) circle (3pt)node[anchor=south]{$x$};
\filldraw (11,0) circle (3pt)node[anchor=north]{$y$};
\filldraw (9,3) circle (3pt)node[anchor=south]{$v_2$};
\filldraw (7,1.5) circle (3pt)node[anchor=north]{$v_3$};
\filldraw (9,0) circle (3pt)node[anchor=north]{$v_4$};

 \draw[black,thick]
(11,0) -- (9,0) -- (7,1.5) -- (9,3) -- (11,3) -- (7,1.5) --
(11,0);

\draw[black,thick] (9,3) -- (9,0) -- (11,3);

\draw[thick] plot[smooth, tension=1] coordinates{(11,0) (12,1.5)
(11,3)}; \draw[thick] plot[smooth, tension=1] coordinates{(11,0)
(13.5,1.5) (11,3)};

  \node [rectangle, draw=white, fill=white] (b) at (10,-2) {$(iii)$};

\filldraw (12.7,1) circle (0pt)node[anchor=south]{$F_2$};
\end{tikzpicture}
\caption{The graph $G$ in the subcases $G_1=H_2$ of Subcase 3.1 and $G_1=K_5^-$ of Subcase 3.2.} \label{fig:K_5^-_app}
\end{figure}

\begin{figure}
\begin{tikzpicture}[scale=0.5]

\draw[draw=gray!30!white,fill=gray!30!white] 
    plot[smooth, tension=1] coordinates{(0,0) (2.5,1.5) (0,3)}  -- 
    plot[smooth, tension=1] coordinates{(0,0) (2.5,1.5) (0,3)};

\draw[draw=white,fill=white] 
    plot[smooth, tension=1] coordinates{(0,0) (1,1.5) (0,3)};

 \draw[white,thick]
(0,0) -- (0,3);

\filldraw (0,3) circle (3pt)node[anchor=west]{$x$};
\filldraw (0,0) circle (3pt)node[anchor=north]{$y$};
\filldraw (-2,3) circle (3pt)node[anchor=east]{$v_4$};
\filldraw (-2,0) circle (3pt)node[anchor=north]{$v_5$};
\filldraw (-2,5) circle (3pt)node[anchor=south]{$v_3$};
\filldraw (0,5) circle (3pt)node[anchor=south]{$v_2$};

 \draw[black,thick]
(0,0) -- (-2,0) --  (-2,3) -- (0,3);

\draw[black,thick] (-2,3) -- (0,0);

\draw[black,thick] (-2,0) -- (0,3) -- (0,5) -- (-2,5) -- (0,3);

\draw[black,thick] (0,5) -- (-2,3) -- (-2,5);

\draw[thick] plot[smooth, tension=1] coordinates{(0,0) (1,1.5)
(0,3)}; \draw[thick] plot[smooth, tension=1] coordinates{(0,0)
(2.5,1.5) (0,3)};

  \node [rectangle, draw=white, fill=white] (b) at (-1,-2) {$(i)$};
8 \filldraw (1.7,1) circle (0pt)node[anchor=south]{$F_2$};




\draw[draw=gray!30!white,fill=gray!30!white] 
    plot[smooth, tension=1] coordinates{(11,0) (13.5,1.5) (11,3)}  -- 
    plot[smooth, tension=1] coordinates{(11,0) (13.5,1.5) (11,3)};

\draw[draw=white,fill=white] 
    plot[smooth, tension=1] coordinates{(11,0) (12,1.5) (11,3)};

 \draw[white,thick]
(11,0) -- (11,3);

\filldraw (11,3) circle (3pt)node[anchor=south]{$x$}; \filldraw
(11,0) circle (3pt)node[anchor=north]{$y$}; \filldraw (9,3) circle
(3pt); \filldraw (9,0) circle (3pt); \filldraw (7,0) circle (3pt);
\filldraw (7,3) circle (3pt);

 \draw[black,thick]
(7,3) -- (9,0) -- (7,0) -- (7,3) -- (9,3) -- (11,0) -- (9,0)  --
(9,3) -- (11,3);

\draw[black,thick] (7,0) -- (9,3);

\draw[black,thick] (9,0) -- (11,3);

\draw[thick] plot[smooth, tension=1] coordinates{(11,0) (12,1.5)
(11,3)}; \draw[thick] plot[smooth, tension=1] coordinates{(11,0)
(13.5,1.5) (11,3)};

  \node [rectangle, draw=white, fill=white] (b) at (10,-2) {$(ii)$};

\filldraw (12.7,1) circle (0pt)node[anchor=south]{$F_2$};


\draw[draw=gray!30!white,fill=gray!30!white] 
    plot[smooth, tension=1] coordinates{(19,0) (21.5,1.5) (19,3)}  -- 
    plot[smooth, tension=1] coordinates{(19,0) (21.5,1.5) (19,3)};

\draw[draw=white,fill=white] 
    plot[smooth, tension=1] coordinates{(19,0) (20,1.5) (19,3)};

 \draw[white,thick]
(19,0) -- (19,3);

\filldraw (19,3) circle (3pt)node[anchor=south]{$x$};
\filldraw (19,0) circle (3pt)node[anchor=north]{$y$};
\filldraw (18,4) circle (3pt);
\filldraw (18,1.5) circle (3pt);
\filldraw (18,-1) circle (3pt);
\filldraw (17,3) circle (3pt);
\filldraw (17,0) circle (3pt);

 \draw[black,thick]
(17,0) -- (18,1.5) -- (19,3) -- (18,4) -- (17,3) -- (19,0) -- (18,-1)
-- (17,0) -- (19,0);

 \draw[black,thick]
(19,3) -- (17,3);

 \draw[black,thick]
(18,4) -- (18,1.5) -- (18,-1);

\draw[thick] plot[smooth, tension=1] coordinates{(19,0) (20,1.5)
(19,3)}; \draw[thick] plot[smooth, tension=1] coordinates{(19,0)
(21.5,1.5) (19,3)};

\filldraw (20.7,1) circle (0pt)node[anchor=south]{$F_2$};

  \node [rectangle, draw=white, fill=white] (b) at (19,-2) {$(iii)$};

\end{tikzpicture}
\caption{The graph $G$ in the subcases $G_1=H_1$ and $G_1=H_2$ of Subcase 3.2.} \label{fig:H_1new_app}
\end{figure}
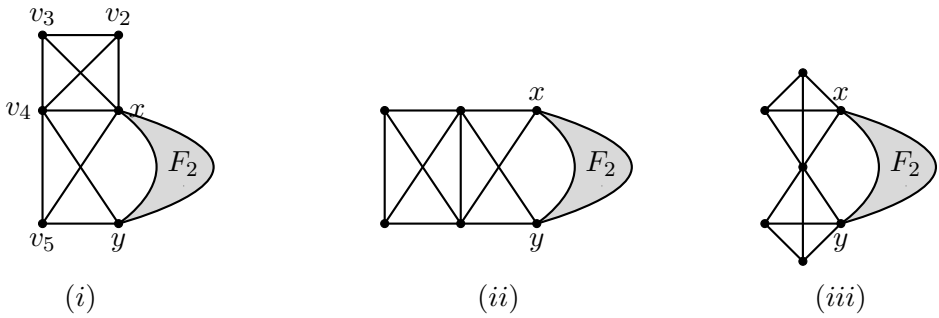

Suppose $G_1=H_1$. Since $F_1$ is an atom,  $G$ would have the
structure of one of the graphs shown in
 Figure \ref{fig:H_1new_app}(i) and (ii).  Since $v$ has degree at least four in $G_1$, case (ii) cannot occur and $v=x$ when
 case (i) occurs.
 Consider the realisation $(G,p)$ where $p|_{F_1}$ is given by $p(y)=(0,1,0)$, $p(x)=(x_1,y_1,z_1)$,  $p(v_2)=(-1,0,-1/3)$, $p(v_3)=(1/\sqrt{2},-1/\sqrt{2},1/3)$, $p(v_4)=(1,0,-1)$ and $p(v_5)=(0,-1,z_5)$.
For simplicity in the expressions we also specify that $x_1=-1/\sqrt{2},y_1=1/\sqrt{2}$ and $z_1=-2$.
Then the unique vector $ m_1(v)=(1,u_5,u_6)$ with $m_1\in Z_1$ is given by
$u_5=\frac{(49\sqrt{2}+73)(17z_5-71+77\sqrt{2})}{17(31z_5+111+21\sqrt{2})}$
and
$u_6=\frac{(-1+4\sqrt{2})(3z_5+17+11\sqrt{2})}{31z_5+111+21\sqrt{2}}.$
By putting $z_5=0,1$ in the above expressions for $(1,u_5,u_6)$ we get the infinitesimal velocities:
$$m_1(v)=\left(1,\frac{(49\sqrt{2}+73)(-71+77\sqrt{2})}{17(111+21\sqrt{2})}, \frac{(-1+4\sqrt{2})(17+11\sqrt{2})}{111+21\sqrt{2}}\right)$$ and
$$\tilde m_1(v)=\left(1, \frac{(49\sqrt{2}+73)(-54+77\sqrt{2})}{17(142+21\sqrt{2})}, \frac{(-1+4\sqrt{2})(20+11\sqrt{2})}{142+21\sqrt{2}}\right).$$

We also consider the realisation $(G,\hat p)$ where $\hat p|_{F_2}=p|_{F_2}$ and $\hat p|_{F_1-\{x,y\}}$ is given by
$\hat p(v_2)=(1/\sqrt{2},-1/\sqrt{2},-1/2)$, $\hat p(v_3)=(-1,0,1/2)$, $\hat p(v_4)=(1/\sqrt{2},1/\sqrt{2},1)$ and $\hat p(v_5)=(1,0,z_5)$.
For simplicity in the expressions we again specify that $x_1=-1/\sqrt{2}$, $y_1=1/\sqrt{2}$ and $z_1=-2$.
Then the unique vector $\hat m_1(v)=(1,u_5,u_6)$ with $\hat m_1\in Z_1$ is given by
$u_5=\frac{\sqrt{2}z_5+2z_5^2-9\sqrt{2}+11z_5-12}{4\sqrt{2}z_5+2z_5^2-15\sqrt{2}+17z_5-18}$ and
$u_6=-\frac{5z_5-12+\sqrt{2}z_5-3\sqrt{2}+2z_5^2}{4\sqrt{2}z_5+2z_5^2-15\sqrt{2}+17z_5-18}$.
By putting $z_5=0$ in the this expression for $\hat m_1(v)$ we get the infinitesimal velocity
$$\hat m_1(v)=\left(1, \frac{-12-9\sqrt{2}}{-18-15\sqrt{2}}, -\frac{-12-3\sqrt{2}}{-18-15\sqrt{2}}\right).$$
We can now check that $m_1(v)$, $\tilde m_1(v)$ and $\hat m_1(v)$ are linearly independent.

The only remaining alternative is $G_1=H_2$. Since $F_1$ is an atom,
$G$ would have the structure of the graph shown in Figure
\ref{fig:H_1new_app}(iii). By symmetry we may assume that $v=x$. We
can now use Lemma \ref{lem:vfree}(d), with $H$ equal to the copy of
$K_4$ attached at $y$, and induction to deduce that $G$ is $v$-free
rigid.
\end{proof}

We close this section by
using Theorem \ref{thm:vfree} to obtain a full characterisation of
$v$-free rigidity on the cylinder.

\begin{thm}\label{thm:vfreefull}
Let $G=(V,E)$ be a graph and $v\in V$. Then $G$ is $v$-free
rigid on $\Y$ if and only if $G$ is rigid on $\Y$
and $v$ is contained
in an $\M_{2,2}^*$-circuit of $G$.
\end{thm}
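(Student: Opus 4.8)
The plan is to prove both directions separately, using the $v$-free rigidity toolkit (Lemma \ref{lem:vfree}), Theorem \ref{thm:vfree}, and the structure of the matroid $\M_{2,2}^*(G)$.

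For necessity, suppose $G$ is $v$-free rigid on $\Y$. First I would observe that $v$-free rigidity is stronger than rigidity: a $v$-free infinitesimal motion is in particular obtained by relaxing, not adding, constraints, so $\rank R_v(G,p) = 3n-2$ forces $\rank R_\CYL(G,p) \geq 3n-2$ as well (the row indexed by $v$ only adds rank), and since the isometry space has dimension $2$, $(G,p)$ is infinitesimally rigid on $\Y$, hence $G$ is rigid on $\Y$ by \cite{NOP}. Next I must show $v$ lies in an $\M_{2,2}^*$-circuit. Suppose not. Then every edge incident to $v$ is a coloop of $\M_{2,2}^*(G)$ restricted suitably, or more precisely $v$ is not spanned by any circuit; I would argue that in this case there is a tight set $X$ with $v\notin X$ such that $X$ together with $v$'s neighbours carries all the "rigidity pressure", and then construct a nontrivial $v$-free infinitesimal flex: deleting the row for $v$ from $R_\CYL$ drops the rank by exactly one (it was a rank-contributing row precisely because $v$ was in no circuit, i.e. the edges at $v$ are independent from the rest in a way that the constraint at $v$ is "used up"), contradicting $v$-free rigidity. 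The cleanest formulation: $G$ is $v$-free rigid iff $\rank R_v(G,p) = 3n-2$ iff the row indexed by $v$ in $R_\CYL(G,p)$ is NOT in the span of the other rows, which fails exactly when $(\omega,\lambda)$ can have $\lambda_v \neq 0$ forced — equivalently when $v$ is "redundantly supported"; I would translate this into the combinatorial statement that the edges at $v$ plus a tight rigid block not containing $v$ already span, which is exactly the condition that $v$ lies in no $\M_{2,2}^*$-circuit.

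For sufficiency, suppose $G$ is rigid on $\Y$ and $v$ lies in an $\M_{2,2}^*$-circuit $C$; let $H$ be the subgraph with edge set $C$ (so $H$ is an $\M_{2,2}^*$-circuit containing $v$). By Theorem \ref{thm:vfree}, $H$ is $v$-free rigid on $\Y$. Now I would build $G$ up from $H$. Since $G$ is rigid on $\Y$ and $\M_{2,2}^*(G)$ has rank $2|V|-2$ (Theorem \ref{thm:cylinderlaman}), I can choose a rigid spanning subgraph $G'$ of $G$ with $E(H)\subseteq E(G')$ and $|E(G')| = 2|V(G')|-2$ ... actually more carefully: take a maximal independent set in $\M_{2,2}^*(G)$ containing a basis of $C$'s matroid closure appropriately — the point is to exhibit $G$ as obtainable from $H$ by a sequence of moves each of which preserves $v$-free rigidity by Lemma \ref{lem:vfree}. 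The relevant moves are $0$-extension (part (a)), $1$-extension (part (b)), gluing a rigid subgraph sharing a vertex (part (c)), and the contraction move (part (d)). The standard fact (an ear-decomposition / inductive-construction argument, analogous to the proof of Theorem \ref{thm:full} via Lemmas \ref{lem:mcon}--\ref{lem:ear}) is that any rigid graph on $\Y$ containing a fixed rigid subgraph $H$ can be reached from $H$ by such operations. I would either cite such a construction or argue directly: take any vertex $u \neq v$ of degree $2$ or $3$ in $G$ not in $H$, reduce, apply induction on $|E(G)|$; the degree-sum count $\sum \deg = 2|E| \geq 2(2n-2)$ guarantees a low-degree vertex, and 2-connectivity considerations (from \cite[Lemma 2.3]{Nix}-type results) let us keep $v$ and the rigid block $H$ intact. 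The base case is $G = H$, handled by Theorem \ref{thm:vfree}.

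The main obstacle I anticipate is the sufficiency direction, specifically producing a clean inductive reduction from $G$ to $H$ staying within rigid graphs and never disturbing $v$ or destroying the $\M_{2,2}^*$-circuit containing $v$ — one must handle the case where low-degree vertices are all inside $H$ or adjacent to $v$, where Lemma \ref{lem:vfree}(c) (gluing along a shared vertex) and (d) (contracting a rigid subgraph) need to be deployed rather than simple vertex reductions. The necessity direction is comparatively routine once the equivalence "$v$-free rigid $\Leftrightarrow$ row $v$ of $R_\CYL$ is redundant $\Leftrightarrow$ there is a nowhere-zero-at-$v$ stress $\Leftrightarrow$ $v$ in a circuit" is set up carefully, but I should be careful that "$v$ in a circuit of $\M_{2,2}^*$" must be phrased for the possibly-non-rigid or exactly-rigid graph correctly, since $\M_{2,2}^*(G)$-circuits are defined on $E(G)$ and a single edge or a $K_3$ is never such a circuit, so small cases ($n\leq 4$, or $G$ a forest-like rigid graph) need a quick separate check.
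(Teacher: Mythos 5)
Your high-level strategy coincides with the paper's (necessity via a rank/dependency argument, sufficiency by induction building $G$ up from the circuit $C$ using Lemma \ref{lem:vfree}), but as written there are genuine gaps in both directions. In the necessity direction your ``cleanest formulation'' is stated backwards: $G$ is $v$-free rigid iff $G$ is rigid \emph{and} the row of $R_{\CYL}(G,p)$ indexed by $v$ \emph{is} in the span of the remaining rows (so that deleting it does not drop the rank), equivalently iff some equilibrium stress has $\lambda_v\neq 0$; and the translation of this into ``$v$ lies in an $\M_{2,2}^*$-circuit'' is asserted rather than proved. The paper avoids this entirely: it takes a minimally $v$-free rigid spanning subgraph $H$, counts $|E(H)|=2|V|-1$, so $H$ contains a \emph{unique} $\M_{2,2}^*$-circuit $C$, and observes that if $v\notin V(C)$ the rows of $R_v(H,p)$ indexed by $C$ would already be dependent, contradicting minimality. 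You should either adopt that argument or actually prove the chain of equivalences you sketch (in particular that a nonzero stress with $\lambda_v\neq 0$ forces a nonzero $\omega$-value on some edge at $v$, whose support circuit then contains $v$).

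The more serious gap is the one you yourself flag in sufficiency: the inductive step when the low-degree vertex $u$ outside the maximal $v$-free rigid subgraph $H_1\supseteq C$ has degree $3$ and admits no admissible $1$-reduction. Saying that parts (c) and (d) of Lemma \ref{lem:vfree} ``need to be deployed'' does not resolve this. The paper's essential device, absent from your plan, is to first pass to a rigid spanning subgraph with exactly $2|V|-1$ edges, so that $C$ becomes the \emph{unique} $\M_{2,2}^*$-circuit of $G$; this uniqueness is then the engine of the whole case analysis. Concretely: if $u$ and its three neighbours induce a $K_4$, one takes a maximal tight set $H_2$ containing them, shows $H_1\cap H_2=\emptyset$ (else Lemma \ref{lem:vfree}(c) contradicts maximality of $H_1$) and applies Lemma \ref{lem:vfree}(d) to $G/H_2$; if they do not induce a $K_4$ and every candidate $1$-reduction at $u$ is blocked, each blocking pair $a,b\in N(u)$ yields either an edge $ab$ or a tight set $X_{ab}$ in $G-u$, and combining these always produces a second $\M_{2,2}^*$-circuit distinct from $C$, a contradiction. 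Without the reduction to a unique circuit and this tight-set analysis, your induction has no way to terminate in the obstructed case, so the proposal as it stands is an outline rather than a proof.
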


\begin{proof}
Suppose $G$ is infinitesimally $v$-free rigid and choose a
minimally $v$-free rigid spanning subgraph $H=(V,E)$ of $G$. Then
$H$ is rigid on $\Y$ and  $|E|=2|V|-1$, so $H$ contains a unique
$\M_{2,2}^*$-circuit $C$. If $v\not\in V(C)$ then the rows of
$R_v(H,p)$ indexed by $C$ would be linearly dependent for any
generic $(H,p)$ and hence $H$ would not be minimally $v$-free rigid.
Hence $v\in V(C)$.

Conversely, suppose that $G=(V,E)$ is rigid on $\Y$
and $v$ is contained
in an $\M_{2,2}^*$-circuit $C$ of $G$. We will show that $G$ is
infinitesimally $v$-free rigid by induction on $|V|+|E|$. Choose a
spanning subgraph $H$ of $G$ such that $H$ is rigid on $\Y$,
$C\subseteq H$ and $H$ has as few edges as possible. If $H\neq G$
then we are through by induction, so we may assume that $H=G$. Hence
$|E|=2|V|-1$ and $C$ is the unique $\M_{2,2}^*$-circuit in $G$.

Let $H_1$ be a maximal $v$-free rigid subgraph of $G$ with
$C\subseteq H_1$.  Note that $H_1$ exists since $C$ itself is
$v$-free rigid by Theorem \ref{thm:vfree}. We may assume that
$V(G-H_1)\neq \emptyset$. The facts that $|E|=2|V|-1$ and
$|E(H_1)|=2|V(H_1)|-1$ now imply that there exists a vertex $u$ of
degree two or three in $G$ which does not belong to $H_1$. If $u$
has degree two then we can apply induction to $G-u$ and then use
Lemma \ref{lem:vfree}(a) to deduce that $G$ is $v$-free rigid. Hence
we may assume that $d(u)=3$ and let $N(u)=\{x,y,z\}$. Since $G$ has
a unique $\M_{2,2}^*$-circuit, at most two of $x,y,z$ are contained
in $V(H_1)$.

Suppose $u,x,y,z$ induces a $K_4$ in $G$.
Let
$H_2$ be a maximal subgraph of $G$ such that $u,x,y,z\in V(H_2)$ and
$|E(H_2)|=2|V(H_2)|-2$.
Then $H_2$ is rigid on $\Y$ by Theorem \ref{thm:full}. If $H_1\cap
H_2\neq \emptyset$ then $H_1\cup H_2$ is $v$-free rigid by  Lemma
\ref{lem:vfree}(c). This contradicts the maximality of $H_1$ and
hence $H_1\cap H_2= \emptyset$. Since $G/H_2$ is $v$-free rigid on
$\Y$ by induction, we can now use Lemma \ref{lem:vfree}(d) to deduce
that $G$ is $v$-free rigid on $\Y$.

Hence we may suppose that $u,x,y,z$ does not induce a $K_4$ in $G$.
If we can apply a 1-reduction at $u$ to obtain a graph $G'$ which
has $|E(G')|=2|V(G')|-1$ and contains a unique $\M_{2,2}^*$-circuit,
then we can apply induction and Lemma \ref{lem:vfree}(b) to deduce
that $G$ is $v$-free rigid on $\Y$. Hence we may assume that, for
all $a,b\in N(u)$ with $\{a,b\}\not\subset V(C)$, either $ab\in E$
or there exists a 
set $X_{ab}\subseteq V-u$ which induces $2|X_{ab}|-2$ edges in $G$ and has $a,b\in X_{ab}$.

Suppose at most one of $x,y,z$ is in $V(C)$. Since $u,x,y,z$ does
not induce a $K_4$ in $G$ we may assume that $X_{xy}$ exists. Since
$C$ is the unique $\M_{2,2}^*$-circuit in $G$, $z\not\in X_{xy}$. If
$\{xz,yz\}\subset E$ then $G[X_{xy}\cup \{u,z\}]$ would contain an
$\M_{2,2}^*$-circuit distinct from $C$. Hence we may suppose that
$xz\notin E$ and $X_{xz}$ exists. Then $G[X_{xy}\cup X_{xz}\cup
\{u\}]$ contains an $\M_{2,2}^*$-circuit distinct from $C$.

We may now assume that $x,y \in V(C)$ and $z\not\in V(C)$. If
$\{xz,yz\}\subseteq E$ then $(C-e)\cup G[u,x,y,z]$ would contain an
$\M_{2,2}^*$-circuit distinct from $C$ for any $e\in E(C)$. Hence we
may assume that $X_{xz}$ exists. Then $(C-e)\cup G[X_{xz}\cup
\{u\}]$ again contradicts the uniqueness of $C$.
\end{proof}

\section{Vertically restricted frameworks}
\label{sec:restricted}

In this section we prove Lemma \ref{lem:zrestricted2}. We will assume throughout the section that all frameworks are in standard position on $\Y$, with respect to a fixed vertex $v_1$. Given a framework $(G,\hat p)$ on $\Y$, we are interested in the set of frameworks $(G,p)$  on $\Y$ which are equivalent to $(G,\hat p)$ and satisfy $(z_1,z_2,\dots,z_n)=b(\hat z_1,\hat z_2,\dots,\hat z_n)$ for some $b\in \mathbb{R}$, where $\hat p(v_i)=(\hat x_i,\hat y_i,\hat z_i)$ and $p(v_i)=(x_i,y_i,z_i)$. We will say that such a framework $(G,p)$ is \emph{vertically restricted equivalent} to $(G,\hat p)$. Henceforth we will abbreviate `vertically restricted' by the prefix VR.
We say that two frameworks are \emph{VR-congruent} if they are linked by a
a reflection in the $xy$-plane or the $yz$-plane.
Thus VR-congruence imples VR-equivalence. We say that $(G,\hat p)$ is \emph{globally VR-rigid} if every VR-equivalent framework is VR-congruent to it.
The set of all frameworks which are VR-equivalent to $(G,\hat p)$ is given by the set of solutions to the following system of equations:
\begin{eqnarray}
\label{eqn:a} \|p(v_i)-p(v_j)\|^2&=& c_{ij} \hspace{3cm} (v_iv_j\in E) \\
\label{eqn:b}  \frac{z_i}{z_2}&=& k_{i} \hspace{3.1cm} (v_i\in V\setminus \{v_1,v_2\})\\
\label{eqn:c} x_i^2+y_i^2&=& 1 \hspace{3.3cm} (v_i\in V-v_1)
\end{eqnarray}
for constants $c_{ij}= \|\hat p(v_i)-\hat p(v_j)\|^2$ and $k_i=\frac{\hat z_i}{\hat z_2}$, assuming $\hat z_2\neq 0$.
We can differentiate these equations to obtain the following linear system for the unknowns $\dot p(v_i)$, $v_i\in V-v_1$:
\begin{eqnarray}
\label{eqn:1} (p(v_i)-p(v_j))\cdot (\dot p(v_i)-\dot p(v_j))&=& 0 \hspace{3cm} (v_iv_j\in E) \\
\label{eqn:2}  z_2\dot z_i-z_i\dot z_2&=& 0 \hspace{3cm} (v_i\in V\setminus \{v_1,v_2\})\\
\label{eqn:3} x_i\dot x_i+y_i\dot y_i&=& 0 \hspace{3cm} (v_i\in V-v_1).
\end{eqnarray}

We say that $(G,\hat p)$ is \emph{infinitesimally VR-rigid} if the only solution to this linear system is the trivial solution $\dot p(v_i)=0$ for all $v_i\in V-v_1$,
or equivalently if the rank of the matrix of coefficients of the system is $3|V|-3$.
This matrix,
the {\em VR-rigidity matrix of $(G,p)$}, has $3|V|-3$ columns and $|E|+2|V|-3$ rows. The rows corresponding to (\ref{eqn:1}) have the form
$$\begin{pmatrix}
\dots & 0 & p(v_i)-p(v_j) & 0 & \dots & 0 & p(v_j)-p(v_i) & 0 & \dots
\end{pmatrix}
$$
when $v_i,v_j\neq v_1$, and
$$\begin{pmatrix}
\dots & 0 & 0 & 0 & \dots & 0 & p(v_j)-p(v_1) & 0 & \dots
\end{pmatrix}
$$
when $v_i=v_1$.
The rows corresponding to (\ref{eqn:2}) have the form
$$\begin{pmatrix}
(0,0,-z_i) & 0 &\dots & 0 & (0,0,z_2) & 0 & \dots
\end{pmatrix}. $$
The rows corresponding to (\ref{eqn:3}) have the form
$$\begin{pmatrix}
\dots & 0 & (x_i,y_i,0) & 0 & \dots
\end{pmatrix}. $$

We say that the graph $G$ is \emph{VR-rigid} if $(G,p)$ is {infinitesimally VR-rigid} for some, or equivalently every, quasi-generic framework $(G,p)$ in standard position on $\Y$.  The graph $G$ is
\emph{minimally VR-rigid}
if $G$ is VR-rigid, and $G-e$ is not rigid for all edges $e$ of $G$.
We first characterise minimal VR-rigidity.

\begin{thm}\label{thm:vrmin}
A graph $G$ is minimally VR-rigid if and only it is connected and contains exactly one cycle.
\end{thm}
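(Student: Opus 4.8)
The plan is to replace the ambient matrix $R_\CYL^\VER(G,p)$ by an intrinsic parametrisation of the VR-equivalent frameworks and to compute the rank of the resulting Jacobian. Write $n=|V|$ and $m=|E|$, and parametrise $\Y$ by $p(v_i)=(\cos\theta_i,\sin\theta_i,z_i)$. Relations (\ref{eqn:b}) and (\ref{eqn:c}) say exactly that $p$ lies in the image of the map $\phi(\theta_1,\dots,\theta_n,t)=((\cos\theta_i,\sin\theta_i,k_it))_{i=1}^n$, where $k_i=\hat z_i/\hat z_1$ is fixed and $t=z_1$ is the single free ``vertical'' parameter. Since the $\theta$-derivatives of $\phi$ lie in distinct coordinate blocks with vanishing $z$-entries while its $t$-derivative has the nonzero $z$-entries $k_i$, the map $\phi$ is an immersion, and so by the chain rule the solution space of (\ref{eqn:1})--(\ref{eqn:3}) equals $D\phi(\ker DF)$, where $F\colon\bR^{n+1}\to\bR^m$ sends $(\theta,t)$ to the vector of squared edge lengths. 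As $\|p(v_i)-p(v_j)\|^2=2-2\cos(\theta_i-\theta_j)+(k_i-k_j)^2t^2$, we get $DF=[\,B\mid w\,]$, where $B$ is the $m\times n$ matrix having $s_{ij}:=2\sin(\theta_i-\theta_j)$ in column $i$, $-s_{ij}$ in column $j$, and zeros elsewhere in the row indexed by $v_iv_j$, and $w\in\bR^m$ has $v_iv_j$-entry $2(k_i-k_j)^2t$. Hence $\rank R_\CYL^\VER(G,p)=(2n-1)+\rank DF$, and, since for generic frameworks VR-rigidity coincides with VR-infinitesimal rigidity, $(G,p)$ is VR-rigid if and only if $\rank DF=n$; the rotation flex corresponds to $(1,\dots,1,0)\in\ker DF$.

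Next I would compute $\rank DF$ at a generic point, where all $s_{ij}$ and all entries of $w$ are nonzero. Here $B$ is a generically weighted signed incidence matrix of $G$: $Bx=0$ forces $x_i=x_j$ along every edge, so $\ker B$ is spanned by the component indicators and $\rank B=n-c(G)$, with $c(G)$ the number of components. Therefore $\rank DF\le n-c(G)+1<n$ when $G$ is disconnected; and if $G$ is a tree then $\rank B=n-1=m$, so the column space of $B$ is all of $\bR^m$, whence $w$ lies in it and $\rank DF=n-1$. Thus VR-rigidity forces $G$ to be connected with $m\ge n$. Conversely, suppose $G$ is connected and $m\ge n$, so $\rank B=n-1$; it remains to show that $w$ is not in the column space of $B$. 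Fix a cycle $C=u_1u_2\cdots u_\ell u_1$ of $G$ (indices mod $\ell$) and let $y\in\bR^m$ be supported on $C$ with $y_{u_ru_{r+1}}=\varepsilon_r/s_{u_ru_{r+1}}$, where $\varepsilon_r=\pm1$ records the orientation in which the edge is traversed; then a direct check gives $B^{T}y=0$, so $y$ is orthogonal to the column space of $B$, and $w$ lying in that column space would force $\langle w,y\rangle=\sum_{r=1}^{\ell}\varepsilon_r\, 2(k_{u_r}-k_{u_{r+1}})^2t\,/\,s_{u_ru_{r+1}}=0$. Since the $\ell$ summands have pairwise distinct denominators $\sin(\theta_{u_r}-\theta_{u_{r+1}})$ and numerators built from distinct coordinate differences, this expression is not identically zero on the cylinder variety, hence is nonzero at a generic $p$; so $w$ is not in the column space of $B$ and $\rank DF=n$. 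This proves that a generic framework on $\Y$ is VR-rigid if and only if $G$ is connected and $m\ge n$.

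Finally I would combine this with edge deletions. If $(G,p)$ is generic and VR-minimally rigid then $G$ is connected with $m\ge n$; if in fact $m\ge n+1$ then $G$ is not a tree, so some edge $e$ is not a bridge, and then $G-e$ is connected with $m-1\ge n$ edges, so $(G-e,p)$ is VR-rigid, contradicting minimality. Hence $m=n$, i.e.\ $G$ is connected with exactly one cycle. Conversely, if $G$ is connected with exactly one cycle then $m=n$, so $(G,p)$ is VR-rigid, while for every edge $e$ the graph $G-e$ has only $n-1<n$ edges and hence $(G-e,p)$ is not VR-rigid; thus $(G,p)$ is VR-minimally rigid.

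The main obstacle is the genericity step showing that $w$ is not in the column space of $B$: one must verify that the cycle sum $\langle w,y\rangle$ does not vanish identically once the relations $\cos^2\theta_i+\sin^2\theta_i=1$ are imposed. This is immediate here because the summands have independent denominators $\sin(\theta_{u_r}-\theta_{u_{r+1}})$ and numerators involving distinct differences $z_{u_r}-z_{u_{r+1}}$, so no cancellation is forced; a generic $p$ then avoids the zero locus of this rational function while simultaneously keeping every $s_{ij}$ and every $w_{ij}$ nonzero, only finitely many proper subvarieties needing to be avoided. The one further point to record is the standard equivalence, for generic frameworks, of VR-rigidity and VR-infinitesimal rigidity, which licenses replacing ``VR-rigid'' throughout by the rank condition $\rank R_\CYL^\VER(G,p)=3n-1$.
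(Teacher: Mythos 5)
Your proof is correct, but it takes a genuinely different route from the paper's. The paper argues by recursive construction: it observes that every connected graph with exactly one cycle is obtained from $K_3$ by adding degree-one vertices and subdividing edges, checks $K_3$ directly from the matrix $R_\CYL^\VER$, and handles the subdivision step by the same limiting argument on convergent sequences of unit-norm infinitesimal motions used in Lemma \ref{lem:vfree}(b). You instead compute the rank of the constraint Jacobian directly: after the angular parametrisation, $DF=[\,B\mid w\,]$ with $B$ a generically weighted signed incidence matrix of rank $n-c(G)$, and the question reduces to whether $w$ meets the column space of $B$, which you settle via a cycle vector in the left kernel and a genericity argument for the resulting cycle sum. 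Your route buys a stronger intermediate statement --- a clean characterisation of VR-(infinitesimal) rigidity itself, namely connectedness together with $|E|\geq|V|$ --- from which minimal rigidity falls out by counting, and it avoids the compactness argument entirely; the paper's route reuses machinery already developed for $v$-free rigidity and avoids the one genuinely delicate point in your argument, namely verifying that the rational function $\sum_r \varepsilon_r (z_{u_r}-z_{u_{r+1}})^2/\sin(\theta_{u_r}-\theta_{u_{r+1}})$ is not identically zero on the cylinder variety (your specialisation sketch --- vary one $z_{u_r}$ and inspect the quadratic leading coefficient, using that a simple cycle has length at least three --- does close this). Both proofs rely equally on the unstated equivalence of VR-rigidity and VR-infinitesimal rigidity at generic points, so that is not a gap relative to the paper.
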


\begin{proof}
It is straightforward to show that if $G$ is minimally VR-rigid then $G$ is connected and has $|V|$ edges.
To prove the converse, we first observe that every connected graph
with exactly one cycle can be constructed recursively from $K_3$ by
subdividing edges and adding vertices of degree one. The
structure of the VR-rigidity matrix implies that $K_3$ is
minimally VR-rigid and that adding vertices of degree one  preserves
minimal rigidity. To see that subdivision preserves minimal rigidity
we can apply a similar technique to the proof of Lemma
\ref{lem:vfree}(b). (The new proof is simpler since there is no free vertex and our restiction to frameworks in standard position  allows no
infinitesimal isometry of the cylinder.)
\end{proof}


The remaining results of this section use the algebraic independence of coordinates of a quasi-generic framework in standard position on $\Y$. The proof technique is similar to that in \cite{JJS, JMN}. In particular we use the following analogue of the `$\Y$-rigidity map' from \cite{JMN}. Given a graph $G=(V,E)$ with $n$ vertices and $m$ edges, we define three maps with domain $D= \{(x_2,y_2,z_2,\ldots,x_n,y_n,z_n)\in \bR^{3n-3}\,:\,z_2\neq 0\}$. We first associate a framework $(G,p)$ in standard position on $\Y$ with each $\tilde p \in D$ by putting $p=(0,1,0,\tilde p)$. The first map  $f_G:D\rightarrow \bR^{m}$ is defined by $f_G(\tilde p)=(\|e_1 \|^2, \dots, \|e_{m}\|^2)$ where $\|e_i\|^2=\|p(v_j)-p(v_k)\|^2$
for each $e_i=v_jv_k\in E$. The second map $h_G:D\rightarrow \bR^{n-2}$ is given by $h_G(\tilde p)=(z_3/z_2,z_4/z_2,\dots, z_n/z_2)$. The third map $\theta_G:D\rightarrow \bR^{n-1}$ is given by $\theta_G(\tilde p)=(x_2^2+y_2^2,\dots, x_n^2+y_n^2)$. The {\em VR-rigidity map} $F_G^{\VR}:D\rightarrow \bR^{m+2n-3}$ is defined by $F^{\VR}_G=(f_G,h_G,\theta_G)$. We showed in \cite[Lemma 8]{JMN} that a framework $(G,p)$ is quasi-generic and in standard position on $\Y$ if and only if  $\td[\bQ(p):\bQ]=2n-2$. We may  use a similar proof technique to \cite[Lemma 9]{JMN} to show further, that if $G$ is VR-rigid and $(G,p)$ is quasi-generic and in standard position on $\Y$, then
$\overline{\bQ(p)}=\overline{\bQ(f_G(p),h_G(p))}$. This gives

\begin{lem}\label{lem:closures}
Suppose $(G,p)$ and $(G,q)$ are two VR-equivalent frameworks in standard position on $\Y$ and that $(G,p)$ is infinitesimally VR-rigid and quasi-generic. Then $\overline{\bQ(p)}=\overline{\bQ(f_G(p),h_G(p))}=\overline{\bQ(q)}$ and $\td[\bQ(p):\bQ]=2|V|-2$.
\end{lem}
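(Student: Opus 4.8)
The plan is to mimic the proof strategy of \cite[Lemma 8 and Lemma 9]{JMN}, transferred to the VR setting, exactly as the paragraph preceding the statement indicates. First I would establish the VR-analogue of \cite[Lemma 8]{JMN}: a framework $(G,p)$ on $\Y$ is VR-quasi-generic if and only if it is VR-congruent to a framework $(G,q)$ in VR-standard position with $\td[\bQ(q):\bQ]=2n-1$. The point is that moving to VR-standard position only uses a rotation about the $z$-axis, which is a one-parameter group, and the constraint $p(v_1)=(0,1,z_1)$ kills exactly one of the $2n$ transcendence degrees of a generic configuration (it fixes $x_1,y_1$ but leaves $z_1$ free), so a VR-quasi-generic framework in VR-standard position has transcendence degree $2n-1$ and conversely. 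This gives the last assertion $\td[\bQ(p):\bQ]=2|V|-1$ immediately once we know $(G,p)$ is VR-quasi-generic and in VR-standard position.

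Next I would prove the key inclusion $\overline{\bQ(p)}=\overline{\bQ(f_G(p),h_G(p))}$ when $G$ is VR-rigid and $(G,p)$ is VR-quasi-generic in VR-standard position, following \cite[Lemma 9]{JMN}. One inclusion is trivial since $f_G,h_G$ are polynomial maps in $p$. For the reverse inclusion, consider the map $F_G^{\VR}=(f_G,h_G,\theta_G)$. Since $(G,p)$ is in VR-standard position, the $\theta_G$-coordinates are all equal to $1$ (they are constants, contributing nothing new to the field), so $\overline{\bQ(F_G^{\VR}(p))}=\overline{\bQ(f_G(p),h_G(p))}$. VR-rigidity of $G$ means the Jacobian of $F_G^{\VR}$ at a generic point — which is (a row-scaling of) the matrix $R_\CYL^{\VR}(G,p)$ together with the rows from $\theta_G$ — has rank $3n-1$ on the subvariety where $x_i^2+y_i^2=1$; more precisely, restricting to the variety $\theta_G=(1,\dots,1)$ and using VR-standard position, $F_G^{\VR}$ has a Jacobian of full rank $2n-1$ in the remaining $2n-1$ free coordinates at a VR-quasi-generic point. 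Hence $F_G^{\VR}$ is a dominant map onto a $(2n-1)$-dimensional image near $p$, so $\td[\bQ(F_G^{\VR}(p)):\bQ]=2n-1=\td[\bQ(p):\bQ]$, which forces $\overline{\bQ(p)}=\overline{\bQ(F_G^{\VR}(p))}=\overline{\bQ(f_G(p),h_G(p))}$.

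Finally, given a VR-equivalent framework $(G,q)$ in VR-standard position, we have $f_G(q)=f_G(p)$ by definition of VR-equivalence, and $h_G(q)=h_G(p)$ because VR-equivalence forces the $z$-coordinates to be a common scalar multiple $b$ of those of $p$, so that each ratio $z_i/z_1=(b\hat z_i)/(b\hat z_1)=\hat z_i/\hat z_1$ is unchanged. Therefore $\overline{\bQ(f_G(q),h_G(q))}=\overline{\bQ(f_G(p),h_G(p))}=\overline{\bQ(p)}$. Applying the previous step with the roles of $p$ and $q$ interchanged (note that $(G,q)$ is in VR-standard position and, having the same transcendence degree, is itself VR-quasi-generic and VR-infinitesimally rigid since these are generic properties shared with $(G,p)$), we also get $\overline{\bQ(q)}=\overline{\bQ(f_G(q),h_G(q))}$, and stringing the equalities together yields $\overline{\bQ(p)}=\overline{\bQ(q)}$, completing the proof.

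\textbf{Main obstacle.} The delicate point is the full-rank Jacobian computation for $F_G^{\VR}$ on the constraint variety — i.e. correctly accounting for the $\theta_G$ equations and the VR-standard-position normalisation so that the effective Jacobian has rank exactly $2n-1$ at a VR-quasi-generic point, which is where VR-rigidity of $G$ (equivalently $\rank R_\CYL^{\VR}(G,p)=3n-1$) must be invoked. This is the analogue of the technical heart of \cite[Lemma 9]{JMN}, and care is needed because the $\theta_G$ constraints are not independent of the standard-position normalisation; once that bookkeeping is done correctly, the rest is a routine transfer of the argument.
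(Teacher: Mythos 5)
Your proposal follows the same route as the paper, which itself only sketches this lemma by deferring to \cite[Lemmas 8 and 9]{JMN}: first the VR-analogue of Lemma 8 (VR-quasi-generic iff VR-congruent to a standard-position framework with $\td[\bQ(q):\bQ]=2n-1$), then $\overline{\bQ(p)}=\overline{\bQ(f_G(p),h_G(p))}$ via the rank of the Jacobian of $F_G^{\VR}$ at a quasi-generic point, then the transfer to $q$ using $f_G(q)=f_G(p)$ and $h_G(q)=h_G(p)$. The one step you should tighten is the final parenthetical: asserting that $(G,q)$ ``has the same transcendence degree'' and is therefore VR-quasi-generic and VR-infinitesimally rigid is circular as written, since equality of transcendence degrees is essentially the content of the conclusion $\overline{\bQ(q)}=\overline{\bQ(p)}$, and a framework merely VR-equivalent to a VR-quasi-generic one need not be quasi-generic a priori. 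The repair is immediate and avoids any second application of the Jacobian argument to $q$: from $\bQ(f_G(q),h_G(q))\subseteq\bQ(q)$ and your identity $\overline{\bQ(f_G(q),h_G(q))}=\overline{\bQ(p)}$ you get $\td[\bQ(q):\bQ]\ge 2n-1$, while the constraints $\theta_G(q)=(1,\dots,1)$ together with $q(v_1)=(0,1,c_1)$ give $\td[\bQ(q):\bQ]\le 2n-1$; since $\overline{\bQ(p)}\subseteq\overline{\bQ(q)}$ are algebraically closed fields of equal finite transcendence degree over $\bQ$, they coincide. With that adjustment the argument is complete and matches the paper's intended proof.
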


We say that a graph $G$ is \emph{redundantly VR-rigid} on $\Y$ if $G-e$ is VR-rigid on $\Y$ for all edges $e$ of $G$. Our next result shows that 2-connectivity and redundant rigidity are necessary conditions for generic global VR-rigidity.

\begin{lem}\label{lem:redundant}
Let $(G,p)$ be a quasi-generic, globally VR-rigid framework in standard position on $\Y$. Then $G$ is $2$-connected and redundantly VR-rigid on $\Y$.
\end{lem}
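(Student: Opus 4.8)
The plan is to prove the contrapositive in two parts, mirroring the standard arguments for global rigidity in the plane and sphere. Suppose $(G,p)$ is VR-quasi-generic and VR-globally rigid on $\Y$.

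\textbf{2-connectivity.} First I would show that $G$ is connected: if $G=G_1\cup G_2$ with $V(G_1)\cap V(G_2)=\emptyset$, then a nontrivial rotation about the $z$-axis applied to $G_2$ alone (fixing $G_1$) produces a VR-equivalent but not VR-congruent framework, contradicting VR-global rigidity. Next, suppose $G$ has a cut vertex $v$, so $G=G_1\cup G_2$ with $V(G_1)\cap V(G_2)=\{v\}$. Since the only continuous VR-isometry is rotation about the $z$-axis, I would reflect $G_2$ in the plane through the $z$-axis and $p(v)$ (this fixes $p(v)$ and the $z$-coordinates of all vertices, and preserves all edge lengths within $G_2$), keeping $G_1$ fixed. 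The resulting framework $(G,q)$ is VR-equivalent to $(G,p)$. It is not VR-congruent to $(G,p)$ for generic $p$: a VR-congruence fixing $(G_1,p|_{G_1})$ pointwise (as it must, being determined by at least two generic points of $G_1$, or by one point plus genericity) would have to be the identity, yet $q|_{G_2}\neq p|_{G_2}$. Here I should be slightly careful if $G_1$ is a single edge or very small, but genericity plus the structure of VR-isometries handles the low-cases; this is the kind of routine check I would spell out.

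\textbf{Redundant VR-rigidity.} Now suppose $G$ is 2-connected but $(G-e,p)$ is not VR-rigid for some edge $e=v_jv_k$. I would follow the averaging/coning-free argument of Hendrickson as adapted in \cite{JMN}. Since $(G-e,p)$ is not VR-rigid, the VR-rigidity map $F^{\VR}_{G-e}$ has a positive-dimensional fibre through $p$ beyond the rotation orbit; equivalently, by Lemma \ref{lem:closures} applied with a VR-rigid spanning subgraph, the edge $e$ lies in a ``flexible'' position. More precisely, using that $(G,p)$ is VR-rigid, there is a smooth path $p^t$ of frameworks, VR-equivalent as far as $G-e$ is concerned, with $p^0=p$, which is not contained in the rotation orbit of $p$, so that $\|p^t(v_j)-p^t(v_k)\|^2$ is a nonconstant real-analytic function of $t$. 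Taking two parameter values $t_1\neq t_2$ with equal values of this squared distance gives two frameworks $(G,q_1)$ and $(G,q_2)$ that are VR-equivalent to each other but (for generic $p$) not VR-congruent — contradicting VR-global rigidity once we translate back to $(G,p)$ via Lemma \ref{lem:closures}, which guarantees $(G,p)$ and the constructed frameworks have the same field closures and hence genuinely distinct configurations rather than artefacts of a congruence. The one subtlety is ensuring the two frameworks are VR-\emph{quasi-generic} enough that ``not VR-congruent'' is detectable; this is exactly where Lemma \ref{lem:closures} and the count $\td[\bQ(p):\bQ]=2|V|-1$ are used.

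\textbf{Main obstacle.} The hard part will be the redundant-rigidity direction: making rigorous that a flex of $(G-e,p)$ yields two \emph{distinct} (non-VR-congruent) VR-equivalent realisations of $G$, rather than two realisations related by a reflection or a rotation. In the Euclidean and spherical settings one argues via dimension counts on the rigidity map together with genericity; here I would port that machinery using $F^{\VR}_G$ and Lemma \ref{lem:closures}, checking that the extra equations (\ref{eqn:2}) and (\ref{eqn:3}) do not change the relevant transcendence-degree bookkeeping. A secondary technical point is the behaviour of small/degenerate blocks $G_1$ in the 2-connectivity argument, which I would dispatch by direct inspection.
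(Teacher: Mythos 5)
Your 2-connectivity argument is essentially the paper's: reflect one side of the separation in a plane containing the $z$-axis and the image of the cut vertex (or, for a disconnected graph, move one component by a nontrivial isometry), then use genericity to rule out VR-congruence. The redundant-rigidity step also follows the same Hendrickson-type strategy the paper uses, but as written it has a genuine gap. You assert that, along the flex of $(G-e,p)$, the nonconstant real-analytic function $t\mapsto\|p^t(v_j)-p^t(v_k)\|^2$ takes some value at two distinct parameters $t_1\neq t_2$. That does not follow: a nonconstant analytic function on an interval can be strictly monotone. What makes the step valid is that the component $\C$ of the configuration space of VR-standard-position frameworks VR-equivalent to $(G-e,p)$ is a \emph{compact} $1$-manifold, hence diffeomorphic to a circle, so the edge-length function attains its extrema on $\C$ and takes intermediate values at least twice.

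Compactness is not automatic in this setting and is the substantive point your outline omits. The cylinder is unbounded in the $z$-direction, and the VR-constraints (\ref{eqn:2}) fix only the ratios $z_i/z_1$, not $z_1$ itself; one must show that the edge-length equations together with the connectedness of $G-e$ bound all coordinates. The paper does exactly this, proving $\C$ is bounded ``by the connectedness of $G-e$ and an inductive argument on $|E|$ which begins with an edge incident to $v_1$,'' and only then concludes $\C$ is a circle. The secondary issue you flag --- that the two frameworks produced are genuinely non-VR-congruent --- is real but is handled, as in \cite[Theorem 9]{JMN}, by comparing the number of preimages of an edge-length value on the circle with the (finite) set of VR-congruences preserving standard position; the transcendence-degree bookkeeping of Lemma \ref{lem:closures} alone does not deliver this conclusion.
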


\begin{proof}
Suppose $G$ is not 2-connected. Then we have $G = G_1 \cup G_2$ for
subgraphs $G_i =(V_i,E_i)$ with $|V_1\cap V_2|\leq 1$ and $v_1\in V_1$. Let $p_1 =
p|_{V_1}$ and $p_2 = p|_{V_2}$. Let $(G,q)$ be obtained from $(G,
p)$ by reflecting $(G_2, p_2)$ in a plane which contains $p(V_1 \cap
V_2)$ and also contains the z-axis. Then $(G, q)$ is a framework in standard position on
$\Y$ and is VR-equivalent but not VR-congruent to $( G , p )$.

Suppose  $G-e$ is not VR-rigid for some $e=v_iv_j\in E$.
Since $p$ is quasi-generic, we can use a similar argument to
that given in \cite{JMN} to deduce that the configuration space of
all frameworks in standard position on $\Y$ which are
VR-equivalent to $(G-e,p)$ is a 1-dimensional manifold. Let $\C$ be
the component of  this manifold which contains $p$.
Then $\C$ is bounded since $G-e$ is VR-rigid and hence connected.
The fact that $\C$ is closed now implies that $\C$ is diffeomorphic
to a circle.
We can now use a similar argument to that given in \cite[Theorem 9]{JMN} to find a $q\in \C$ such that $(G,q)$ is VR-equivalent, but not VR-congruent, to $(G,p)$.
\end{proof}

\begin{thm}\label{thm:vrglobal}
A quasi-generic framework $(G,p)$ in standard position on $\Y$ is globally VR-rigid if and only if $G=(V,E)$ is 2-connected and $|E|\geq |V|+1$.
\end{thm}

\begin{proof}
Necessity follows from Lemma \ref{lem:redundant}. To prove the converse,
we assume that $(G,q)$ is a quasi-generic framework in standard position on $\Y$ which is VR-equivalent to $(G,p)$. Let $p(v_i)=(x_i,y_i,z_i)$ and $q(v_i)=(a_i,b_i,c_i)$ for all $v_i\in V$.

We first consider the case when $G$ contains a vertex $v_0$ of degree two which is distinct from $v_1$. Let $v_j,v_k$ be the neighbours of $v_0$ in $G$. By symmetry we may assume that $v_j\neq v_1$.
We have the following constraints on the positions of $v_0,v_j,v_k$:
\begin{eqnarray}
\label{eqn6} (x_0-x_j)^2+(y_0-y_j)^2+(z_0-z_j)^2-(a_0-a_j)^2-(b_0-b_j)^2-(c_0-c_j)^2&=&0\\
\label{eqn7}(x_0-x_k)^2+(y_0-y_k)^2+(z_0-z_k)^2-(a_0-a_k)^2-(b_0-b_k)^2-(c_0-c_k)^2&=&0\\
\label{eqn8}-c_0z_j+c_jz_0&=&0
\end{eqnarray}
and
\begin{equation}\label{eqn10}
x_i^2+y_i^2=1=a_i^2+b_i^2 \mbox{ for all $i\in\{0,j,k\}$}.
\end{equation}

We can  use a computer algebra package
to solve
equations (\ref{eqn6})-(\ref{eqn8}) for $a_0,b_0$ and $c_0$ and then
substitute these values into 
the equation $a_0^2+b_0^2=1$
to obtain an equation of
the form $P=0$ where $P=\sum_{i=0}^8 t_iz_0^i$,
$t_i\in \overline{\bQ(p|_{G'},q|_{G'},x_0,y_0)}$ and $G'=G-v_0$.  Since $G'$ is infinitesimally VR-rigid by Theorem \ref{thm:vrmin}, we have $\overline{\bQ(q|_{G'})}=\overline{\bQ(p|_{G'})}$ by Lemma \ref{lem:closures}, and hence $t_i\in \overline{\bQ(p|_{G'},x_0,y_0)}$. Since $z_0$ is algebraically independent over $\overline{\bQ(p|_{G'},x_0,y_0)}$, we must have $t_i=0$ for $0\leq i\leq 8$. We may use equations 
(\ref{eqn10})
to simplify the expression for $t_8$ to give   
$$t_8=-4(c_j-z_j)^4(c_j+z_j)^4(a_ja_k-b_jb_k+1)(a_ja_k+b_jb_k-1)=0.$$ 
Since $(G',q|_{G'})$ is quasi-generic and in standard position with respect to $v_1$, we have $(a_ja_k-b_jb_k+1)(a_ja_k+b_jb_k-1)\neq 0$.
 Hence $c_j=\pm z_j$. By reflecting $(G,q)$ in the $xy$-plane, we may assume that $c_j=z_j$. The fact that $(G,p)$ and $(G,q)$ are VR-equivalent now gives $c_i=z_i$ for all $v_i\in V$.
This in turn implies that the frameworks $(G,\tilde p)$ and $(G,\tilde q)$ obtained by projecting $(G,p)$ and $(G,q)$ onto the $xy$-plane are equivalent. Since  quasi-generic realisations of 2-connected graphs on the unit circle are globally rigid, $(G,\tilde p)$ and $(G,\tilde q)$ are congruent on the unit circle. Since $\tilde p(v_1)=(0,1)=\tilde q(v_1)$, either $\tilde p=\tilde q$, or $\tilde p$ can be obtained from $\tilde q$ by reflection in the $y$-axis. This implies that 
either $p=q$, or $p$ can be obtained from $q$ by reflection in the $yz$-plane. Hence $(G,p)$ and $(G,q)$ are VR-congruent.

It remains to consider the case when all vertices of $G$ other than $v_1$ have degree at least three. In this case it is easy to see that  $G-e$ is a 2-connected graph with at least $|V|+1$ edges for some $e\in E$. We may now use induction to deduce that $(G-e,p)$ is globally VR-rigid and hence that
$(G,q)$ is VR-congruent to $(G,p)$. 
\end{proof}

Lemma \ref{lem:zrestricted2} is just a restatement of Theorem \ref{thm:vrglobal}. 

\section{Closing Remarks}

1) We can also consider frameworks on families of concentric
cylinders.  Let $(\mathcal{Y}_1, \mathcal{Y}_2, \dots, 
\mathcal{Y}_n)$ be an ordered family of (not necessarily distinct)
concentric cylinders  where $\mathcal{Y}_i=\{(x,y,z)\in
\mathbb{R}^3:x^2+y^2=r_i\}$ and $r=(r_1,\dots,r_n)$ is a vector of
positive real numbers.
We say that the family is {\em generic} if the radii $r_i$ are
algebraically independent over $\bQ$. A \emph{framework  on $(\mathcal{Y}_1, \mathcal{Y}_2, \dots, 
\mathcal{Y}_n)$} is an ordered pair $(G,p)$ consisting of a graph $G$  and a
realisation $p$ such that $p(v_i)\in \mathcal{Y}_i$ for all $v_i\in
V$.

In \cite{NOP} it was proved that a graph is generically rigid on a
family of concentric cylinders if and only if it is generically
rigid  on the cylinder. We may adapt the proof of Theorem
\ref{thm:full} to obtain a similar result for generic global
rigidity on families of cylinders, but our proof only works for {\em
generic} families of cylinders. Given a framework $(G,p)$ in $\bR^3$ with $p(v_i)=(x_i,y_i,z_i)$, we define the family of concentric
cylinders induced by $p$ to be $\Y^p=(\mathcal{Y}_1, \mathcal{Y}_2, \dots, 
\mathcal{Y}_n)$   where $\mathcal{Y}_i=\{(x,y,z)\in
\mathbb{R}^3:x^2+y^2=x_i^2+y_i^2\}$.


\begin{thm}\label{thm:concentric}
Let $(G,p)$ be a generic framework in $\mathbb{R}^3$.  Then $(G,p)$
is globally rigid on the family $\Y^p$ of concentric cylinders
induced by $p$ if and only if $G$ is either a complete graph on at
most four vertices or $G$ is 2-connected and redundantly rigid on
the cylinder.
\end{thm}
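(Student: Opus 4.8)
The plan is to mimic the proof of Theorem \ref{thm:full}, replacing each ingredient by its `concentric family' analogue, and to exploit the fact that the radii are now generic so that VR-rigidity never needs to enter. Necessity of the two conditions follows exactly as in Theorem \ref{thm:full}: the necessity part of \cite{JMN} applies verbatim to any fixed surface, and since a generic framework on $\Y^p$ restricts to a generic framework on the (single) cylinder once one forgets the radii, redundant rigidity on the cylinder and $2$-connectivity are forced. For the converse we again argue by induction on $|E|$, using the ear decomposition of $\M_{2,2}^*(G)$ supplied by Lemmas \ref{lem:mcon} and \ref{lem:ear}. So it suffices to prove the $\M_{2,2}^*$-circuit case (the analogue of Theorem \ref{thm:globrigid}) together with a glueing lemma, and then assemble.

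First I would establish the analogue of Theorem \ref{thm:unitstress}: every generic realisation of an $\M_{2,2}^*$-circuit on a generic family $\Y^p$ has a maximum rank equilibrium stress. Here the rigidity matrix $R_{\CYL}$ is replaced by the obvious matrix whose `surface rows' read off $\bar p(v_i)=(x_i,y_i,0)$ — note the constraint $x_i^2+y_i^2=r_i$ differentiates to the same normal direction, so the stress-matrix formalism of Section \ref{sec:rig} carries over unchanged. The recursive construction of Theorem \ref{thm:strongrecurse} still applies (it is purely combinatorial), and I would check that the proofs of Theorems \ref{lem:typea1} and \ref{thm:vsplitmax} go through on $\Y^p$: the key algebraic-independence inputs (that $\{y_3,z_3\}$, resp. the relevant coordinates, are generic) now come from genericity of $p$ together with genericity of the radii, and Lemma \ref{lem:expand} is replaced by the statement that a nonzero equilibrium stress on $\Y^p$ is nowhere zero in the $\lambda$-coordinate — which, as in Section \ref{sec:vfree}, reduces to a `$v$-free rigidity' statement. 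Because the radii are algebraically independent, the $v$-free analysis is actually easier: one need not worry about vertices accidentally lying at equal heights or equal radii, so the awkward explicit Maple computations in the proof of Theorem \ref{thm:vfree} collapse. The base cases $K_5-e$, $H_1$, $H_2$ are handled by exhibiting the same kind of infinitesimally rigid realisations with nowhere-zero stress, now on a generic family.

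Given the maximum-rank-stress result, I would then prove global rigidity of generic $\M_{2,2}^*$-circuits on $\Y^p$ directly, \emph{without} the VR detour. By the analogue of Theorem \ref{thm:genstressPartial} (which is where I expect to have to do a little work — one needs the `equivalent frameworks share the $\omega$ part of a maximum-rank stress' statement on a generic family; this should follow from the same cokernel/rational-function argument as \cite[Theorem 12]{J&N}), any framework $(G,q)$ equivalent to generic $(G,p)$ on $\Y^p$ has an equilibrium stress $(\omega,\lambda')$. The stress matrix then forces $(z_1',\dots,z_n')$ into $\coker\Omega=\span\{(z_1,\dots,z_n),(1,\dots,1)\}$ and $(x_i')$, $(y_i')$ into $\coker(\Omega+\Lambda')$. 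But now the constraints $x_i'^2+y_i'^2=r_i$ with the $r_i$ algebraically independent are far more rigid than on a single cylinder: an affine relation among the coordinate vectors that is compatible with $n$ algebraically independent quadratic constraints must be trivial. I would use this to conclude directly that $(G,q)$ is congruent to $(G,p)$, with no appeal to Lemma \ref{lem:zrestricted2} — this is precisely the promised simplification. Finally, the glueing step: Lemmas \ref{lem:isom} and \ref{lem:glue} hold on $\Y^p$ (the isometries of a generic concentric family are exactly the rotations and reflections fixing the $z$-axis, since no two cylinders can be swapped), so repeated glueing along the ear decomposition yields global rigidity of $(G,p)$ on $\Y^p$, completing the induction.

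The main obstacle, I expect, is verifying that the maximum-rank-stress machinery of Sections \ref{sec:recurse}--\ref{sec:vsplit} genuinely survives the passage to a generic family — in particular re-running the proof of Theorem \ref{thm:vsplitmax}, where one must check that the limiting argument (the entry of $A_t$ blowing up while $\lambda_0^t$ stays bounded) still works when the split vertex's curve $\C$ lies on a cylinder of a different radius from its neighbours. Everything hinges on $\bar p(v_1)\in\span\{a,b\}$ and on $\C$ not being tangent to the relevant cylinder at $p(v_1)$, both of which remain true; but the bookkeeping needs care. Once that is in place, the rest is a routine transcription, and the VR-free shortcut in the last paragraph is what makes the concentric case cleaner than the single-cylinder case.
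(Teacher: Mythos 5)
Your overall architecture (necessity from \cite{JMN}, reduce to the $\M_{2,2}^*$-circuit case via ear decomposition and glueing, handle circuits via a maximum rank equilibrium stress, and skip the VR detour) matches the paper's, and your observation that genericity of the radii makes VR-rigidity unnecessary is exactly the simplification the paper exploits. But there is a genuine gap in the central step. You propose to re-run the whole stress machinery of Sections \ref{sec:recurse}--\ref{sec:vsplit} \emph{on the generic family} $\Y^p$, and in particular to re-prove Theorem \ref{thm:vsplitmax} there. That proof places the new vertex $v_0$ coincident with $v_1$ and then moves $q^t(v_0)$ along a curve $\C$ on the cylinder \emph{through} $p(v_1)$, so that $c^t=q^t(v_0)-p(v_1)\to 0$ and $|\omega^t_{01}|\to\infty$. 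On a generic family the new vertex must lie on its own cylinder, whose radius is algebraically independent of $r_1$; no curve on that cylinder passes through $p(v_1)$, so $c^t$ cannot tend to $0$ and the blow-up/Schur-complement argument collapses. You flag this as needing ``care'' but then assert that the two key facts ``remain true'', which they do not in the form the proof requires. The same problem afflicts your plan to redo Lemma \ref{lem:expand} and Theorem \ref{thm:vfree} on the family (and your claim that the explicit computations there ``collapse'' for generic radii is unsubstantiated).

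The paper sidesteps all of this: it does \emph{not} redo any construction on the family. It takes the already-established unit-cylinder result (Theorem \ref{thm:unitstress}, built from Theorems \ref{lem:typea1} and \ref{thm:vsplitmax}), regards the unit cylinder as \emph{some} family of concentric cylinders, and invokes \cite[Lemma 10]{J&N}, which transfers the existence of a maximum rank equilibrium stress from some family to every generic realisation on a generic family. It then cites \cite[Theorem 9]{J&N} to conclude global rigidity on $\Y^p$ directly from the maximum rank stress --- which is precisely the statement you propose to re-derive with the vague ``affine relation compatible with $n$ algebraically independent quadratic constraints must be trivial'' argument. That sketch is too imprecise to check as written, though the result it aims at is available in the literature; the missing transfer lemma for the stress, by contrast, is essential and your proposal has no substitute for it.
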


\begin{proof}
Necessity follows from \cite{JMN}. The proof of sufficiency is
similar to the proof of Theorem \ref{thm:full}.
Theorem \ref{thm:vsplitmax} and \cite[Lemma 10]{J&N} (which tells us
that, if $G$ has a realisation with a maximum rank equilibrium
stress on some family of cylinders, then every generic realisation
of $G$ on a generic family of cylinders will have the same property)
imply that $(G,p)$ has a maximum rank equilibrium stress on $\Y^p$ when $G$ is
an $\M_{2,2}^*$-circuit. Combining this with \cite[Theorem 9]{J&N}
(which tells us that $(G,p)$ is globally rigid on $\Y^p$ when it has
a maximum rank equilibrium stress) implies that $(G,p)$ is globally
rigid on $\Y^p$ when $G$ is an  $\M_{2,2}^*$-circuit. The extension
from a circuit to a 2-connected redundantly rigid graph can be
accomplished using an ear decomposition of the rigidity matroid as
in the proof of Theorem \ref{thm:full}.
\end{proof}

2) As noted in the proof of Theorem  \ref{thm:concentric},
\cite[Theorem 9]{J&N} tells us that a generic framework on a generic
family of concentric cylinders is globally rigid if it has a maximum
rank equilibrium stress. Here we show that the same result holds for
the cylinder.

\begin{thm}\label{thm:unitcylinderglobal}
Let $(G,p)$ be a generic framework on $\Y$ with a
maximum rank equilibrium stress $(\omega,\lambda)$. Then $(G,p)$ is
globally rigid on $\Y$.
\end{thm}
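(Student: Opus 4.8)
The plan is to extract from the maximum rank hypothesis the two purely combinatorial conclusions that $G$ is $2$-connected and has at least $n+1$ edges, and then to invoke Lemmas~\ref{lem:zrestricted} and~\ref{lem:zrestricted2} (no appeal to Theorem~\ref{thm:full} being needed). Write $x,y,z$ for the vectors of $x$-, $y$- and $z$-coordinates of $p$; ``maximum rank'' means $\rank\Omega(\omega)=\rank\big(\Omega(\omega)+\Lambda(\lambda)\big)=n-2$, so that $\ker\Omega(\omega)=\langle z,\mathbf 1\rangle$ and $\ker\big(\Omega(\omega)+\Lambda(\lambda)\big)=\langle x,y\rangle$, and we may assume $n\ge 3$. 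To count edges, let $F$ be the set of edges on which $\omega$ is non-zero. The third coordinate of \eqref{eq:stressdefn} reads $\Omega(\omega)z=0$; together with $\Omega(\omega)\mathbf 1=0$, $\rank\Omega(\omega)=n-2$ and the genericity of $z$ this forces $(V,F)$ to be connected, and the same equation at a vertex of degree at most one in $(V,F)$ would make its unique incident $\omega$-value zero, so $(V,F)$ has minimum degree at least two. Hence $|E|\ge|F|\ge n$, and if $|F|=n$ then $(V,F)$ is a spanning cycle; but $E(C_n)$ is $(2,2)$-sparse, hence independent in $\M_{2,2}^*(C_n)$, so a generic realisation of a cycle on $\Y$ has no non-zero equilibrium stress, contradicting $\omega|_F\ne 0$. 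Therefore $|E|\ge n+1$.

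Next I would prove $2$-connectivity. As $(V,F)$ is connected so is $G$, and $G$ has no vertex of degree one; thus if $G$ had a cut vertex we could split $G=G_1\cup G_2$ with $V(G_1)\cap V(G_2)=\{v\}$ and $|V(G_i)|\ge 3$ (group the components of $G-v$ appropriately). After a rotation about the axis assume $p(v)=(\pm 1,0,z_v)$, so $y_v=0$. Define $u\in\bR^n$ by $u_i=y_i$ for $v_i\in V(G_2)\setminus\{v\}$ and $u_i=0$ otherwise. Using the second coordinate of \eqref{eq:stressdefn} at each vertex, a row-by-row computation gives $\big(\Omega(\omega)+\Lambda(\lambda)\big)u=-c\,e_v$, where $e_v$ is the $v$-th standard basis vector and $c=\sum_{v_j\sim v,\ v_j\in V(G_2)}\omega_{vj}y_j$. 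Since $\Omega(\omega)+\Lambda(\lambda)$ is symmetric of rank $n-2$, its image is $\langle x,y\rangle^{\perp}$. If $c\ne 0$ then $e_v\in\langle x,y\rangle^{\perp}$, forcing $x_v=y_v=0$ and contradicting $x_v^2+y_v^2=1$. If $c=0$ then $u\in\ker\big(\Omega(\omega)+\Lambda(\lambda)\big)=\langle x,y\rangle$, but $u$ vanishes on the (at least two) vertices of $V(G_1)\setminus\{v\}$, so by genericity $u=0$, contradicting $u_i=y_i\ne 0$ for $v_i\in V(G_2)\setminus\{v\}$. Hence $G$ is $2$-connected.

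Finally, given any framework $(G,q)$ on $\Y$ equivalent to $(G,p)$, after applying isometries of $\Y$ we may take both to be in standard position. By Theorem~\ref{thm:genstressPartial}, $(G,q)$ has an equilibrium stress whose edge part is again $\omega$, hence with the same $\Omega(\omega)$; the third coordinate of its defining equation puts the vector of $z$-coordinates of $q$ into $\langle z,\mathbf 1\rangle$, and standard position then makes it equal to $b\,z$ for some $b\in\bR$. This is exactly the argument of Lemma~\ref{lem:zrestricted}, whose proof uses the maximum rank hypothesis only for $(G,p)$ and does not require $(G,q)$ to be quasi-generic. Since $G$ is $2$-connected with at least $n+1$ edges, Lemma~\ref{lem:zrestricted2} now yields that $(G,q)$ is congruent to $(G,p)$, so $(G,p)$ is globally rigid on $\Y$.

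I expect the $2$-connectivity step to be the main obstacle: one must choose the separation with $|V(G_i)|\ge 3$ (this uses, via the components of $G-v$, that $G$ has no vertex of degree one), rotate so that $p(v)$ has zero second coordinate (the identity $\big(\Omega(\omega)+\Lambda(\lambda)\big)u=-c\,e_v$ fails at the neighbours of $v$ otherwise), and verify that identity row by row; a lesser point is to confirm that Lemma~\ref{lem:zrestricted} is genuinely available for an arbitrary, not necessarily quasi-generic, equivalent framework.
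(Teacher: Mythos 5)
Your proof is correct, and its overall architecture coincides with the paper's: both reduce the theorem to showing that $G$ is $2$-connected with at least $n+1$ edges and then invoke Lemmas~\ref{lem:zrestricted} and~\ref{lem:zrestricted2} (and you are right that Lemma~\ref{lem:zrestricted} applies to an arbitrary equivalent framework on $\Y$, since its proof only uses the maximum rank stress of $(G,p)$ and Theorem~\ref{thm:genstressPartial}). Where you genuinely diverge is in how the two combinatorial conditions are extracted from the stress. For the edge count, the paper deletes the edges with $\omega_e=0$ and argues that the remaining graph is covered by $\M_{2,2}^*$-circuits, hence is rigid on $\Y$; this buys the stronger conclusion that $G$ is rigid, at the cost of quoting the rigidity of $\M_{2,2}^*$-circuits. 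Your route -- the support of $\omega$ is spanning, connected and of minimum degree two by the third coordinate of \eqref{eq:stressdefn} and the genericity of $z$, and cannot be a Hamilton cycle because cycles are independent in $\M_{2,2}^*$ and so carry no non-zero stress generically -- is more elementary, needing only the standard fact that $\M_{2,2}^*$-independence gives linearly independent rows of $R_{\CYL}$ (used implicitly throughout the paper). For $2$-connectivity, the paper gives a purely linear-algebraic argument on $\Omega(\omega)$ alone: it block-decomposes $\Omega(\omega)$ along the separation and uses the vanishing column sums of $\Omega(\omega_1)$ to force $\rank\Omega(\omega)\leq n-3$, which handles $|V_1\cap V_2|\in\{0,1\}$ uniformly. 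Your argument instead works with $\Omega(\omega)+\Lambda(\lambda)$, the normalisation $y_v=0$ and the identity $(\Omega+\Lambda)u=-c\,e_v$, splitting into the cases $c\neq 0$ (which contradicts $x_v^2+y_v^2=1$ via $\operatorname{im}(\Omega+\Lambda)=\langle x,y\rangle^{\perp}$) and $c=0$ (which contradicts genericity); this is correct but uses the geometry of the cylinder where the paper's argument is purely matricial. Both versions are sound; the paper's is slightly slicker, yours is more self-contained on the edge count.
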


\begin{proof}
The result will follow from Lemmas \ref{lem:zrestricted} and
\ref{lem:zrestricted2} if we can show that $G$ is 2-connected and
has at least $n+1$ edges, where $n=|V(G)|$.

 We first show that $G$
is 2-connected. Suppose not, then $G = G_1 \cup G_2$ for subgraphs
$G_i =(V_i,E_i)$ with $|V_1\cap V_2|\leq 1$. Let
$\omega^i=\omega|_{E_i}$. Then $\omega^i$ can be extended to a
unique equilibrium stress $(\omega^i,\lambda^i)$ for $(G_i,p|_{G_i})$ on $\Y$,
for $i=1,2$. We will consider the case when $|V_1\cap V_2|=1$ (the
case when $V_1\cap V_2=\emptyset$ is simpler). Let
$V_1=\{v_1,v_2,\ldots,v_k\}$ and $V_2=\{v_k,v_{k+1},\ldots,v_n\}$.
Then
$$\Omega(\omega)=\begin{pmatrix} A_1 & a^T & 0
\\ a & b& c   \\ 0 & c^T & A_2
\end{pmatrix}, \mbox{ where }
\Omega(\omega_1)=\begin{pmatrix} A_1 & a^T\\ a & b_1
\end{pmatrix}, \:\: \Omega(\omega_2)=\begin{pmatrix} b_2 & c \\
c^T & A_2\end{pmatrix}$$ and $b_1+b_2=b$.

Since $\Omega(\omega_1)$ has column sums equal to zero, we can use
elementary row operations to reduce $\Omega(\omega)$ to the form
$$\begin{pmatrix} A_1 & a^T & 0
\\ 0 & b_2& c   \\ 0 & c^T & A_2
\end{pmatrix}.$$
Since $\rank \begin{pmatrix} A_1 & a^T
\end{pmatrix} = \rank \Omega(\omega_1)\leq |V_1|-2$ and
$\rank \begin{pmatrix}  b_2& c   \\ c^T & A_2
\end{pmatrix} =\rank \Omega(\omega_2)\leq |V_2|-2$, we have $\rank
\Omega(\omega)\leq n-3$. This contradicts the hypothesis that
$(\omega,\lambda)$ is a maximum rank equilibrium stress.

We next show $G$ has at least $n+1$ edges. In fact we will show the
stronger fact that $G$ is rigid on $\Y$. Since deleting an edge $e$
with $\omega_e=0$ will not reduce the rank of
$\Omega_\CYL(\omega,\lambda)$, we may assume that $\omega_e\neq 0$
for all edges $e$ of $G$.  Then every edge of $G$ is contained in a
$\M_{2,2}^*$-circuit. Since $G$ is connected, $\M_{2,2}^*$-circuits
are rigid, and the union of any two rigid subgraphs with a non-empty
intersection is rigid, $G$ is rigid on $\Y$.
\end{proof}

3) We conjecture that the converse to Theorem
\ref{thm:unitcylinderglobal} holds. That is, a generic framework
$(G,p)$ which is globally rigid on  $\Y$ must have
a maximum rank equilibrium stress. This would be an analogue of the
main result of \cite{GHT}. It follows from our work that this
conjecture holds when $G$ is an $\M_{2,2}^*$-circuit since we have
shown that every $\M_{2,2}^*$-circuit is both globally rigid and has
a maximum rank equilibrium stress. One way to extend this to all
graphs would be to prove a version of Lemma \ref{lem:glue} showing
that the `glueing' operation preserves the property of having a
maximum rank equilibrium stress. We can adapt the proof of Connelly
\cite[Lemma 10]{Ccomb} to prove the special case when $G_1$ and
$G_2$ have exactly two vertices in common but the general case seems
challenging. An alternative approach to this problem would be to try
to extend the recursive characterisation of $\M_{2,2}^*$-circuits to
a recursive characterisation of graphs which are 2-connected and
redundantly rigid on $\Y$, and then use it to show that all generic
realisations of these graphs have a maximum rank equilibrium stress.

4) In \cite{NSTW} frameworks on `expanding' spheres were considered.
Our $v$-free rigidity model is analogous to this for the cylinder.
We believe that a similar technique to the proof of Theorem
\ref{thm:vfreefull} could be used to extend  our characterisation of
$v$-free rigidity on $\Y$  to the case when a subset of the vertices
is allowed to move off the cylinder in a coordinated way. Since the
analogous problem when all vertices are free to move off the
cylinder independently is exactly the well known 3-dimensional
rigidity problem, it may be of interest to extend this further to
consider the case when two or more subsets of the vertex set can
move independently off the cylinder. It is not hard to adapt the
examples given in \cite{NSTW} to show that the obvious sparsity
count is not sufficient to imply minimal rigidity when there are at
least three subsets moving independently, but it may be sufficient
when there are at most two subsets.

\section*{Appendix: Base graphs for the proofs of Theorems \ref{thm:unitstress} and \ref{thm:vfree}}
We define a framework $(G,p)$ for $G\in \{K_5^-, H_1,H_2\}$  which
is infinitesimally rigid on $\Y$ and has a nowhere
zero maximum rank equilibrium stress $(\omega,\lambda)$.
These constructions form the base case for the inductive proofs of Theorems \ref{thm:unitstress} and \ref{thm:vfree}. We will use the labeling of the vertices given in Figure \ref{fig:smallgraphs} and adopt the convention that
 $\omega_{ij}$  is the weight on the edge $v_iv_j$ in $\omega$ and  $\lambda_i$  is the weight on the vertex
 $v_i$ in $\lambda$.
\\

\subsection*{\bf Case 1: $G=K_5^-$} Let $(G,p)$ and
$(\omega,\lambda)$ be defined by $p(v_1)=(0,1,0), p(v_2)=(1,0,-1),
p(v_3)=(\frac{1}{\sqrt{2}},-\frac{1}{\sqrt{2}},\frac{1}{3}),
p(v_4)=(-1,0,-\frac{1}{3}),
p(v_5)=(\frac{1}{\sqrt{2}},\frac{1}{\sqrt{2}},\frac{1}{2})$,
\begin{multline*}
(\omega_{12},\omega_{13},\omega_{14},\omega_{15},\omega_{23},\omega_{24},\omega_{25},\omega_{34},\omega_{45})=
(239, -216-654\sqrt{2}, 201+270\sqrt{2}, 756+616\sqrt{2},\\
 108+327\sqrt{2}, -\frac{1635}{2}-852\sqrt{2},108+88\sqrt{2}, -108-327\sqrt{2}, -648-528\sqrt{2})
\end{multline*}
and
$$(\lambda_1,\lambda_2,\lambda_3,\lambda_4,\lambda_5)=(290+254\sqrt{2},1595+1397\sqrt{2},1524+870\sqrt{2}, 3045+2667\sqrt{2},1016+580\sqrt{2}).$$
It is straightforward to check that $\rank R_\CYL(G,p)=13$, that
$(\omega,\lambda)\cdot R_\CYL(G,p)=0$ and that $\rank
\Omega_\CYL(\omega,\lambda)=9$.

\subsection*{\bf Case 2: $G=H_1$} Let $(G,p)$ and
$(\omega,\lambda)$ be defined by $p(v_1)=(0,1,0),
p(v_2)=(-1,0,-\frac{1}{3}),
p(v_3)=(\frac{1}{\sqrt{2}},-\frac{1}{\sqrt{2}},\frac{1}{3}),p(v_4)=(1,0,-1),
p(v_5)=(0,-1,\frac{2}{3}),
p(v_6)=(\frac{1}{\sqrt{2}},\frac{1}{\sqrt{2}},\frac{1}{2})$,
\begin{multline*}
(\omega_{12},\omega_{13},\omega_{14},\omega_{15},\omega_{16},\omega_{23},\omega_{24},\omega_{34},\omega_{45},\omega_{46},\omega_{56})=
(1,2\sqrt{2},-\frac{361}{441}+\frac{10}{49}\sqrt{2},-\frac{62}{147}-\frac{82}{147}\sqrt{2},\\
-\frac{20}{49}-\frac{80}{441},\sqrt{2},\frac{1}{2}+\sqrt{2},-\sqrt{2},\frac{32}{147}+\frac{12}{49}\sqrt{2},\frac{4}{49}+\frac{16}{441}\sqrt{2},-\frac{24}{49}-\frac{32}{147}\sqrt{2})
\end{multline*}
and
$$(\lambda_1,\lambda_2,\lambda_3,\lambda_4,\lambda_5,\lambda_6)=(-\frac{10}{9}-\frac{10}{9}\sqrt{2}, -3-3\sqrt{2}, -4-2\sqrt{2}, -\frac{13}{9}-\frac{13}{9}\sqrt{2}, \frac{4}{3}+\frac{4}{3}\sqrt{2}, \frac{8}{9}+\frac{4}{9}\sqrt{2}).$$
It is straightforward to check that $\rank R_\CYL(G,p)=16$, that
$(\omega,\lambda)\cdot R_\CYL(G,p)=0$  and that $\rank
\Omega_\CYL(\omega,\lambda)=12$.

\subsection*{\bf Case 3: $G=H_2$} Let $(G,p)$ and
$(\omega,\lambda)$ be defined by $p(v_1)=(0,1,0),
p(v_2)=(-1,0,-\frac{1}{3}),
p(v_3)=(\frac{1}{\sqrt{2}},-\frac{1}{\sqrt{2}},\frac{1}{3}),p(v_4)=(1,0,-1),
p(v_5)=(0,-1,\frac{2}{3}),
p(v_6)=(\frac{1}{\sqrt{2}},\frac{1}{\sqrt{2}},\frac{1}{2})$,
$p(v_7)=(-\frac{1}{\sqrt{2}},-\frac{1}{\sqrt{2}},-\frac{1}{4})$
\begin{multline*}
(\omega_{12},\omega_{13},\omega_{14},\omega_{17},\omega_{23},\omega_{24},\omega_{34},\omega_{45},\omega_{46},\omega_{47},\omega_{56},\omega_{57},\omega_{67})=
(1,2\sqrt{2}, \frac{4}{7}\sqrt{2}-\frac{13}{21},\frac{8}{7}+\frac{8}{21}\sqrt{2},\\
\sqrt{2},\frac{1}{2}+\sqrt{2},-\sqrt{2}, -\frac{2652}{25165}-\frac{338}{25165}\sqrt{2}, -\frac{2764}{25165}\sqrt{2}-\frac{4652}{25165}, \frac{28424}{75495}\sqrt{2}+\frac{24784}{25165},\\
 \frac{2764}{25165}\sqrt{2}+\frac{4652}{25165}, \frac{568}{3595}+\frac{16}{3595}\sqrt{2}, \frac{18608}{45297}+\frac{11056}{45297}\sqrt{2})
\end{multline*}
and
\begin{multline*}
(\lambda_1,\lambda_2,\lambda_3,\lambda_4,\lambda_5,\lambda_6,\lambda_7)=(-\frac{74}{21}\sqrt{2}-\frac{82}{21},-3-3\sqrt{2},-2\sqrt{2}-4,\frac{269}{105}-\frac{253}{105}\sqrt{2}, -\frac{4}{35}\sqrt{2}-\frac{12}{35},\\
-\frac{212}{315}\sqrt{2}-\frac{328}{315}, -\frac{704}{315}\sqrt{2}-\frac{1216}{315}).
\end{multline*}

It is straightforward to check that $\rank R_\CYL(G,p)=19$, that
$(\omega,\lambda)\cdot R_\CYL(G,p)=0$  and that $\rank
\Omega_\CYL(\omega,\lambda)=15$.


\begin{thebibliography}{99}

\bibitem{Abb} {\scshape T. Abbott},
\newblock
Generalizations of Kempe's universality theorem,
\newblock {\itshape Master's thesis, Massachusetts Institute of Technology} (2008).

\bibitem{AR} {\scshape L. Asimow and B. Roth,}
\newblock  The rigidity of graphs,
\newblock {\itshape Trans. Amer. Math. Soc.}
245 (1978), 279--289.

\bibitem{B&J} {\scshape A. R. Berg and T. Jord\'{a}n},
\newblock Algorithms for graph rigidity and scene analysis,
In Algorithms—ESA 2003,
volume 2832 of Lecture Notes in Comput. Sci., pages 78–89. Springer, Berlin, 2003.

\bibitem{Cnotes} {\scshape R. Connelly},
\newblock Questions, conjectures and remarks on globally rigid tensegrities,
preprint 2009, see\\
{\texttt{http://www.math.cornell.edu/}{\raise.17ex\hbox{$\scriptstyle\sim$}}\texttt{connelly/09-Thoughts.pdf}}

\bibitem{Ccomb} {\scshape R. Connelly},
\newblock Combining globally rigid frameworks,
\newblock {\itshape Proceedings of the Steklov institute of mathematics,}
275 (2011), 191--198.

\bibitem{CW} {\scshape R. Connelly and W. Whiteley},
\newblock Global rigidity: the effect of coning,
\newblock {\itshape Discrete Comput. Geom.} 43 4 (2010) 717--735.

\bibitem{C&H} {\scshape C.R. Coullard and L. Hellerstein,}
\newblock Independence and port oracles for matroids, with an application to computational learning theory,
\newblock {\itshape Combinatorica}
16 (1996), no. 2, 189--208.

\bibitem{GHT} {\scshape S. Gortler, A. Healy and D. Thurston},
\newblock Characterizing generic global rigidity,
\newblock {\itshape  Amer. J. Math.} 132 4 (2010) 897-939.

\bibitem{Hen} {\scshape B. Hendrickson,}
\newblock Conditions for unique graph realizations
\newblock {\itshape SIAM J. Comput.}
21 (1992), no. 1, 65--84.

\bibitem{J&J} {\scshape B. Jackson and T. Jord\'{a}n},
\newblock Connected Rigidity Matroids and Unique Realisations of Graphs,
\newblock {\itshape J. Comb. Theory B} 94 (2005) 1--29.

\bibitem{JJS} {\scshape B. Jackson, T. Jord\'{a}n and Z. Szabadka},
\newblock Globally linked pairs of vertices in equivalent realizations of graphs,
\newblock {\itshape Discrete and Comput. Geom.} 35:3 (2006) 493--512.

\bibitem{JKN} {\scshape B. Jackson, V. Kaszanitzky and A. Nixon},
\newblock
Rigid cylindrical frameworks with two coincident points,
\newblock
{\itshape arxiv: 1607:02039}, 2016

\bibitem{JMN} {\scshape B. Jackson, T. McCourt and A. Nixon},
\newblock
Necessary conditions for the generic global rigidity of frameworks on surfaces,
\newblock
{\itshape Discrete and Comput. Geom.}, 52:2 (2014) 344--360.

\bibitem{J&N} {\scshape B. Jackson and A. Nixon},
\newblock
Stress matrices and global rigidity of frameworks on surfaces,
\newblock
{\itshape Discrete and Comput. Geom.}, 54:3 (2015) 586--609.

\bibitem{laman} {\scshape G. Laman},
\newblock  On graphs and rigidity of plane
skeletal structures,
\newblock {\itshape J. Engineering Math.} 4 (1970),
331--340.

\bibitem{L&S} {\scshape A. Lee and I. Streinu},
\newblock
Pebble game algorithms and sparse graphs,
\newblock {\itshape Discrete Math.},
308 (2008), no. 8, 1425--1437.

\bibitem{LL} {\scshape L. Liberti and C. Lavor},
\newblock
Open research areas in distance geometry, to appear 
\newblock
{\itshape in A. Migdalas, P. Pardalos (eds.), Open Problems in Optimization, arxiv: 1610.00652}, 2016

\bibitem{Nix} {\scshape A. Nixon},
\newblock
A constructive characterisation of circuits in the simple $(2,2)$-sparsity matroid,
\newblock
{\itshape European J. Combin.}, 42 (2014) 92--106.

\bibitem{NOP} {\scshape A. Nixon, J. Owen and S. Power},
\newblock Rigidity of frameworks supported on surfaces,
\newblock {\itshape SIAM J. Discr. Math.}
26 (2012), no. 4, 1733--1757.

\bibitem{NOP2} {\scshape A. Nixon, J. Owen and S. Power},
\newblock A characterization of generically rigid frameworks on surfaces of revolution,
\newblock {\itshape SIAM J. Discr. Math.}
28 (2014), no. 4, 2008--2028.

\bibitem{NSTW} {\scshape A. Nixon, B. Schulze, S. Tanigawa and W. Whiteley},
\newblock
Rigidity of frameworks on expanding spheres, to appear in
\newblock
{\itshape SIAM J. Discr. Math., arxiv: 1501.01391}, 2015

\bibitem{Sax} {\scshape J. Saxe},
\newblock
Embeddability of weighted graphs in k-space is strongly NP-hard,
\newblock {\itshape In Seventeenth Annual Allerton Conference on Communication,
Control, and Computing, Proceedings of the Conference held in
Monticello, Ill., October 10-12, 1979}.

\bibitem{Tar} {\scshape R. Tarjan},
\newblock
Depth-first search and linear graph algorithms,
\newblock {\itshape SIAM J. Comput.},
1 (1972), no. 2, 146--160.

\bibitem{Wcones} {\scshape W. Whiteley},
\newblock
Cones, infinity and one story buildings,
\newblock {\itshape Structural Topology},
8 (1983), 53--70.

\bibitem{Whi} {\scshape W. Whiteley},
\newblock
Vertex splitting in isostatic frameworks,
\newblock {\itshape Structural Topology},
16 (1990) 23--30.


\end{thebibliography}
\end{document}